\newtheorem{theorem}{Theorem}[section]
\newtheorem{proposition}[theorem]{Proposition}
\newtheorem{lemma}[theorem]{Lemma}
\newtheorem{corollary}[theorem]{Corollary}
\newtheorem{definition}{Defintion}
\newtheorem{example}{Example}
\newtheorem{problem}{Problem}
\newtheorem{question}{Question}
\theoremstyle{definition}
\newtheorem{main}{Theorem}
\newtheorem{main_cor}[main]{Corollary}
\def\Z{\mathbb{Z} }
\def\R{\mathbb{R} }
\def\Q{\mathbb{Q} }
\def\A{\mathbb{A} }
\def\T{\mathbb{T} }
\def\nbd{neighborhood }
\def\nbds{neighborhoods }
\def\Sv{\mathop{\mathrm{Sing}}(v)}
\def\Pv{\mathop{\mathrm{Per}}(v)}
\def\Cv{\mathop{\mathrm{Cl}}(v)}
\title[Generalization of Poincar\'e recurrence theorem]{Generalization of Poincar\'e recurrence theorem for flows on surfaces and characterization of minimal flows on compact surfaces}
\author{Tomoo Yokoyama}
\date{\today}
\address{Applied Mathematics and Physics Division, Gifu University, Yanagido 1-1, Gifu, 501-1193, Japan\\}
\email{tomoo@gifu-u.ac.jp}
\keywords{Flows on surface, Non-wandering set, Recurrence, Periodic orbits}
\thanks{This work was partially supported by JSPS Kakenhi Grant Number 20K03583}
\subjclass[2010]{37E35,37B20,37G30,37C55}
\begin{document}

\maketitle

\begin{abstract}
Poincar\'e recurrence theorem implies the density of recurrent points for volume-preserving dynamical systems on compact domains. The density of closed orbits in the non-wandering set is one of the essential properties of Axiom~A and chaos. The minimal flows are one of the most fundamental objects in topological dynamics. To analyze minimality, recurrence, and density in this paper, we introduce a strict limit circuit and a circuit with wandering holonomy. Using these concepts, we classify non-recurrent non-wandering orbits of flows on compact surfaces with finitely many singular points, characterize the minimality of flows on compact surfaces, and generalize the Poincar\'e recurrence theorem for flows on surfaces. Moreover, we characterize the density of closed orbits in the non-wandering set under finiteness of singular points. Furthermore, we demonstrate the necessity of finiteness of singular points and compactness of surfaces. In addition, the analogous results hold for flows with finitely many connected components of the singular point set on non-compact surfaces using the end compactification and collapsing singular points.
\end{abstract}

\section{Introduction}

Poincar\'e recurrence theorem grantees returns to points for volume-preserving dynamical systems on compact domains. 
In 1927, Birkhoff introduced the concepts of non-wandering points and recurrent points \cite{birkhoff1927dynamical}. 
Using these concepts, we can describe and capture dynamical behaviors. 
For instance, the existence of returns induced by Poincar\'e recurrence theorem can be described as the density of recurrent orbits in the non-wandering set for measure-preserving dynamical systems on finite measure spaces. 
The density of closed orbits in the non-wandering set for a flow is one of the essential properties for Smale's Axiom~A \cite{smale1967differentiable} and Devaney's chaos   \cite{Devaney1988chaos}. 
%
The sufficient conditions and genericity for the density of closed orbits in the non-wandering set are studied in hyperbolic dynamical systems   
\cite{Arnaud1998,dankner1977smale,dankner1978smale,kurata1978hyperbolic,mane1987proof,newhouse1973hyperbolic,pugh1967improved,pugh1968closing,smale1967differentiable}. 
It is also shown that there is a diffeomorphism with the density of closed orbits in the non-wandering set for any isotopy class of the set of diffeomorphisms on a compact manifold whose dimension is more than two. 
The analogous results for flows on manifolds of dimension at least four are also shown by suspension operations. 
Moreover, topological stability for a homeomorphism on a closed manifold and the uniform periodic shadowing property for a homeomorphism on a compact Hausdorff space are sufficient conditions for closed orbits' denseness \cite{aoki1994topological,good2018topological}. 

The characterizations for the denseness of closed orbits are known in the low-dimensional dynamical system's case. 
The following statements are equivalent for an orientation-preserving circle homeomorphism $f: \mathbb{S}^1 \to \mathbb{S}^1$: {\rm(1)} $\overline{\mathop{\mathrm{Per}}(f)} = \Omega(f)$; {\rm(2)} There are periodic orbits; {\rm(3)} The rotation number of $f$ is rational (cf. \cite[Proposition~11.1.4]{katok1997introduction}).
This equivalence means that the existence of non-periodic minimal sets is an obstruction of the density of closed orbits in the non-wandering set for a circle homeomorphism. 
Moreover, the non-existence of non-periodic recurrent points for a circle homeomorphism $f$ is equivalent to the density condition $\overline{\mathop{\mathrm{Per}}(f)} = \Omega(f)$. 
This equivalence holds for a regular curve homeomorphism, 
because the set of periodic points of a regular curve homeomorphism 
is either empty or dense in the non-wandering set \cite{daghar2021periodic}. 
Here a regular curve is a compact connected metric space such that for any point $x \in X$ and any open \nbd $U$ of $x$, there is a \nbd $V \subseteq U$ of $x$ with finite boundary. 
Furthermore, the equivalence for the existence of non-periodic minimal sets and the denseness of closed orbits is studied for local dendrites \cite{abdelli2018nonwandering,makhrova2016set,makhrova2020limit}. 


The correspondence between closed points and non-wandering points is also described. 
For instance, the following question is posed \cite[Question~1]{boyd2015diffeomorphisms}: 
\begin{question}\label{q:correspondence}
If the set of closed points is closed, under what additional conditions on the dynamical system can we conclude that any non-wandering orbits are closed?
\end{question}
The answer is negative in general. 
In fact, there is a spherical diffeomorphism with hyperbolic periodic points such that the non-wandering set is not the set of periodic points \cite{boyd2015diffeomorphisms}. 
On the other hand, there are affirmative results for mappings on closed intervals and $C^1$ self-maps $f$ of form $f(x,y) = (f_1(x), f_2(x,y))$ on the unit square as follows. 
A continuous map of a closed interval to itself whose periodic point set is closed and satisfies the denseness of closed orbits, because the following are equivalent for a continuous map $f$ of a closed interval to itself: {\rm(1)} $\mathop{\mathrm{Per}}(f)$ is closed; {\rm(2)} $\mathop{\mathrm{Per}}(f) = \Omega(f)$, where $\mathop{\mathrm{Per}}(f)$ is the set of periodic points of $f$ and $\Omega(f)$ is the set of non-wandering points \cite{xiong1981continuous,nitecki1982maps}. 
The same statement holds for a $C^1$ self-map $f(x,y) = (f_1(x), f_2(x,y))$ on the unit square such that 
$\mathop{\mathrm{Per}}(f)$ is closed \cite{arteaga1995smooth,efremova2014remarks}.

Minimality is studied from various aspects in topological dynamics (see books  \cite{Auslander1988mini,Ellis1969lecture,Gottschalk1955top_dyn} for details). 
For instance, examples of minimal flows on surfaces 
are constructed \cite{aranson1973invariant,gardiner1985structure,Sacker1972existence}, and properties of minimal flows on 
surfaces are described \cite{Athanassopoulos1995minimal,Athanassopoulos1997stable,Basener2006min,Gottschalk1963minimal,Smith1988transitive,Ulcigrai2011absence}. 
On the other hand, the existence of dense orbits of flows on compact surfaces is topologically characterized \cite{Marzougui2009dense,yokoyama2016topological}. 
Similarly, we would like to find a topological characterization of minimality for flows on compact surfaces. 
Therefore we pose the following problem. 
\begin{problem}\label{prob:minimality}
Find a topological characterization of minimal flows. 
\end{problem}

From physical and differential equation's points of view, the no-slip boundary condition and non-compactness appearing from properties of fluid phenomena on punctured spheres are fundamental conditions. 
In order to analyze such fluid phenomena, it is necessary to allow the degeneracy of singularities and non-compactness of surfaces. 

To analyze minimality, recurrence, and density in this paper, we introduce a strict limit circuit and a circuit with wandering holonomy. Using these concepts, we characterize the mechanism of occurrence of the non-recurrent non-wandering behaviors as follows. 

\begin{main}\label{main:rec_wandering_pt}
Let $v$ be a flow with finitely many connected components of the singular point set on a compact surface. 
Then one of the following three statements holds for an orbit $O$ exclusively: 
\\
{\rm(1)} The orbit $O$ is recurrent {\rm (i.e.} $O \subseteq \Cv \sqcup \mathrm{R}(v)$ {\rm)}. 
\\
{\rm(2)} The orbit $O$  is not recurrent but non-wandering and satisfies $\overline{O} - O = \alpha(O) \cup \omega(O) \subseteq \Sv$ and either {\rm(2.1)}, {\rm(2.2)}, {\rm(2.3)}, or {\rm(2.4)}: 
\\
{\rm(2.1)} There is a strict limit quasi-circuits in $\Omega(v)$ containing $O \subset \mathrm{int} \mathrm{P}(v)$. 
\\
{\rm(2.2)} There is the blow-up of a circuit with wandering holonomy containing $O \subset \mathrm{int} \mathrm{P}(v)$. 
\\
{\rm(2.3)} There is the blow-up of a circuit which is contained in $\overline{\Pv} - \Pv \subseteq \Omega(v)$ and contains $O$. 
\\
{\rm(2.4)} $O \subset \overline{\mathrm{R}(v)} - \mathrm{R}(v)$. 
\\
{\rm(3)} The orbit $O$ is wandering {\rm(i.e.} $O \cap \Omega(v) = \emptyset${\rm)}. 
\end{main}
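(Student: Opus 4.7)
The plan is to argue by a case analysis on the recurrence/wandering status of the orbit $O$. Cases (1), (2), and (3) are, by definition, mutually exclusive and jointly exhaustive: an orbit $O$ either lies in the recurrent set $\Cv\sqcup\mathrm{R}(v)$, is disjoint from $\Omega(v)$, or is non-wandering but not recurrent. So the only real content lies in the structural statement for (2): the form of $\overline{O}-O$ and the partition into subcases (2.1)–(2.4).

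For case (2), I would first establish the identity $\overline{O}-O=\alpha(O)\cup\omega(O)$. Since $O$ is a regular orbit that is not closed, the general fact $\overline{O}=O\cup\alpha(O)\cup\omega(O)$ applies; moreover, neither $\alpha(O)\subseteq O$ nor $\omega(O)\subseteq O$ can occur, since either inclusion would make $O$ Poisson-stable and hence recurrent, contradicting the hypothesis. Next, using a Poincar\'e--Bendixson-type description of limit sets of flows on compact surfaces with finitely many connected components of singular points, I would argue that a non-recurrent orbit cannot have a regular point in its limit set: if $\alpha(O)$ or $\omega(O)$ contained a regular point, the classical structure theory would force a periodic orbit, a nontrivial recurrent set, or a polycycle whose interior is swept by $O$, contradicting the non-recurrence of $O$. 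Hence $\alpha(O)\cup\omega(O)\subseteq\Sv$.

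For the four subcases, I would stratify $O$ by its position within $\Omega(v)$. If $O\subset \overline{\mathrm{R}(v)}-\mathrm{R}(v)$, then (2.4) holds directly. If $O\subset \overline{\Pv}-\Pv$, then the orbit together with its limit singular components lies in a blow-up of a circuit contained in $\overline{\Pv}-\Pv\subseteq\Omega(v)$, giving (2.3). In the remaining case, $O$ sits in $\mathrm{int}\,\mathrm{P}(v)$, and I would analyze the first-return map on a transversal section through a regular point of $O$. The finiteness hypothesis bounds the number of ``end directions'' at each singular component in $\overline{O}-O$, so that $O$ together with its limit components closes up into a (quasi-)circuit. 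A dichotomy on the holonomy of this first-return map then distinguishes (2.1), where the circuit is strictly limit inside $\Omega(v)$, from (2.2), where the holonomy is wandering and we obtain the blow-up of a circuit with wandering holonomy.

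The main obstacle will be the construction and exhaustiveness in the last stratum, namely showing that in $\mathrm{int}\,\mathrm{P}(v)$ only (2.1) and (2.2) arise. One must verify that the holonomy behavior on the transversal is strictly of one of the two prescribed types, ruling out hybrid behaviors. This is where the finiteness of connected components of $\Sv$ and the compactness of the surface are used decisively: together they guarantee that the transversal section meets $\overline{O}$ in a compact set on which the first-return map is well-defined off a finite set, so that the qualitative behavior of orbits accumulating on $O$ from either side must fall into exactly one of the two circuit types. The remaining routine verifications--that the limit sets indeed consist of singular points and that the three top-level alternatives are exclusive--follow from standard surface flow theory.
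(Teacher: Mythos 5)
There is a genuine gap, in fact two. First, your justification of $\overline{O}-O=\alpha(O)\cup\omega(O)\subseteq \Sv$ is based on a false step: you claim that a regular point in $\alpha(O)\cup\omega(O)$ would ``contradict the non-recurrence of $O$'', but non-recurrent orbits routinely have regular points in their limit sets (an orbit spiralling onto a limit cycle, or an orbit in the wandering annulus of the Denjoy flow, whose $\omega$-limit set is an exceptional quasi-minimal set). What excludes this in case (2) is the \emph{non-wandering} hypothesis, and even then the stratum $O\subset\overline{\mathrm{R}(v)}$ is not elementary: in the paper's proof of Theorem~\ref{main:rec_wandering} this inclusion is obtained from the finiteness of quasi-minimal sets together with the external result \cite[Theorem~3.15]{yokoyama2021poincare}, while in the remaining strata it falls out of the circuit structure, not out of non-recurrence alone. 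As written, your argument for $\alpha(O)\cup\omega(O)\subseteq\Sv$ would ``prove'' the same inclusion for wandering orbits, which is false.

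Second, the exhaustiveness in $\mathrm{int}\,\mathrm{P}(v)$ --- that a non-recurrent, non-wandering orbit outside $\overline{\Cv}\cup\overline{\mathrm{R}(v)}$ lies on a strict limit (quasi-)circuit or on the blow-up of a circuit with wandering holonomy --- is precisely the hard content of the paper, and your ``dichotomy on the holonomy of the first-return map'' asserts the conclusion rather than proving it. The paper needs the waterfall construction and closed transversals parallel to limit circuits (Lemma~\ref{lem:lc_transersal}), the M\"obius-band surgery turning an orientation-reversing return map into a circuit (Lemma~\ref{lem:bdry_circuit}), the accumulation argument using finiteness of genus and of singular points to build an invariant flow box and locate the circuit (Lemma~\ref{lem:wandering_h}), and Lemma~\ref{lem:circuit} for the $\overline{\Pv}$ stratum --- which you also assert without proof, although it requires Gutierrez smoothing and a vector-field modification. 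Finally, your outline never engages the actual hypothesis: $\Sv$ is only assumed to have finitely many connected components (not finitely many points), and $\mathrm{R}(v)$ may be non-empty, whereas the key classification (Lemma~\ref{lem:wandering_pt}) holds only for finitely many singular points and no non-closed recurrent orbits. The paper therefore blows down the singular components and cuts along closed transversals through the quasi-minimal sets, checking that $O$ stays non-recurrent and non-wandering under these operations; this reduction is also the reason the conclusion must be stated in terms of quasi-circuits and blow-ups of circuits, objects your argument never produces.
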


The previous theorem follows from a more general result for (possibly non-compact) surfaces with finite genus and finite ends (see Theorem~\ref{main:rec_wandering}). 
%
%
Applying Theorem~\ref{main:rec_wandering_pt}, we also obtain the following characterization of minimality, which is an answer to Problem~\ref{prob:minimality} for flows on surfaces. 

\begin{main}\label{main:minimal}
A flow on a connected surface with finite genus and finite ends is minimal if and only if it is non-wandering and the set difference $\overline{O} - O$ for any orbit $O$ does not consist of singular points. 
\end{main}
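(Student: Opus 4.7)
The forward direction is essentially immediate: if $v$ is minimal, then every orbit is dense in $S$, so every point is non-wandering; for any orbit $O$ the set $\overline{O}-O = S\setminus O$ has non-empty two-dimensional interior (as $O$ is at most one-dimensional and $S$ is a connected $2$-surface), so it contains regular (non-singular) points.

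For the converse, assuming $v$ is non-wandering and $\overline{O}-O$ does not consist of singular points (interpreted as $\overline{O}-O \not\subseteq \Sv$) for every orbit $O$, I would apply Theorem~\ref{main:rec_wandering} --- the non-compact extension of Theorem~\ref{main:rec_wandering_pt} --- to an arbitrary orbit $O$. The non-wandering assumption excludes case~(3), and in every subcase of case~(2), $\overline{O}-O = \alpha(O)\cup \omega(O)$ is a non-empty subset of $\Sv$, contradicting the hypothesis. So every orbit is in case~(1), hence recurrent. Furthermore, for a singular point or a periodic orbit one has $\overline{O}-O=\emptyset \subseteq \Sv$, again excluded. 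Thus $\Sv=\emptyset$, no orbit is closed, and every orbit is non-trivially recurrent.

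Next I would show $\overline{O}=S$ for every orbit $O$. Setting $M:=\overline{O}$ and assuming for contradiction $M\subsetneq S$, connectedness of $S$ yields a non-empty closed $v$-invariant topological boundary $\partial M$; any orbit $O'\subset\partial M$ must be non-trivially recurrent with $\overline{O'}\subseteq\partial M\subsetneq M$. The plan is to invoke the structural description underlying Theorem~\ref{main:rec_wandering_pt} --- the enumeration of boundary phenomena of quasi-minimal sets via strict limit quasi-circuits, circuits with wandering holonomy, and their blow-ups --- to force $\partial M$ to contain at least one orbit falling in case~(2), with $\overline{O'}-O' \subseteq \Sv$, again contradicting the hypothesis. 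This would give $\partial M=\emptyset$, so $M=S$ by connectedness, and hence $v$ is minimal.

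The main obstacle will be this last step: showing that the topological boundary of a proper quasi-minimal set in a surface of finite genus with finitely many ends must contain at least one case-(2) orbit. This is precisely where the paper's novel notions --- strict limit quasi-circuits and circuits with wandering holonomy --- are expected to carry the weight, by supplying a complete catalogue of the possible boundary behaviors of proper quasi-minimal sets; once that structural claim is in hand, the contradiction is mechanical.
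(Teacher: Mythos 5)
Your forward direction reaches the right conclusion by a faulty route: since $O$ is dense in a minimal flow, $\overline{O}-O=S-O$ is the complement of a dense set and has \emph{empty} interior, so the parenthetical "non-empty two-dimensional interior" is false. What you actually need is that minimality forces $\Sv=\emptyset$ (a singular point is a nonempty closed invariant set, so it would have to equal $S$), together with $S-O\neq\emptyset$; the paper gets non-emptiness from Cherry's theorem (uncountably many recurrent orbits in $\overline{O}$). In the converse, your first step is essentially the paper's, except for an ordering issue: Theorem~\ref{main:rec_wandering} assumes finitely many connected components of $\Sv$, which Theorem~\ref{main:minimal} does not, so you must first deduce $\Sv=\emptyset$ directly from the hypothesis (a closed orbit has $\overline{O}-O=\emptyset$, which is excluded) and only then invoke the theorem, as the paper does.

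The genuine gap is the final step, which you yourself flag as the main obstacle. The structural claim you hope for --- that the topological boundary of a proper quasi-minimal set must contain a case-(2) orbit --- is not proved anywhere in the paper, is false as a general statement (the Denjoy exceptional quasi-minimal set is proper and its boundary consists entirely of recurrent orbits), and under your standing hypotheses ($\Sv=\emptyset$ and every orbit recurrent) there are no case-(2) orbits at all, so the claim is literally equivalent to "there is no proper quasi-minimal set", i.e.\ to the statement you are trying to prove; the argument is circular rather than merely incomplete. The paper closes this step by a different mechanism: Lemma~\ref{lem:top23} (exceptional orbits lie in $\mathrm{int}\,\overline{\mathrm{P}(v)}$) rules out exceptional orbits since $\mathrm{P}(v)=\emptyset$, so every orbit is locally dense; then \cite[Proposition~2.2]{yokoyama2016topological} shows $\overline{O}$ is a minimal set, hence $\overline{O(y)}=\overline{O}$ is a neighborhood of each $y\in\overline{O}$ by local density, so $\overline{O}$ is open as well as closed and equals $S$ by connectedness. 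Without an input of this kind (Ma\v{i}er-type exclusion of exceptional orbits plus the fact that closures of locally dense orbits are minimal sets), your reduction to "every orbit closure is $S$" cannot be completed.
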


Each of the non-wandering condition and the condition for the difference is necessary because of the existence of the Denjoy flow and periodic flows. 
Indeed, the Denjoy flow is not minimal, but the set difference $\overline{O} - O$ for any orbit $O$ does not consist of singular points, and any periodic flows are non-wandering but the set difference $\overline{O} - O = \emptyset$ for any orbit $O$ consists of singular points.

Applying Theorem~\ref{main:rec_wandering_pt}, we have the following characterization of the density of recurrence in the non-wandering set, which is a generalization of the Poincar\'e recurrence theorem for flows on surfaces.

\begin{main}\label{main_cor:rec_th}
Then following are equivalent for a flow with finitely many connected components of the singular point set on a compact surface: 
\\
{\rm(1)} The set of recurrent points is dense in the non-wandering set.
\\
{\rm(2)} There are neither non-closed recurrent points, strict limit quasi-circuits, nor blow-ups of circuits with wandering holonomy. 
\end{main}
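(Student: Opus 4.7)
The plan is to deduce Theorem~\ref{main_cor:rec_th} directly from the orbit classification in Theorem~\ref{main:rec_wandering_pt} by reading each obstruction in~(2) as precisely the structure that pushes a non-wandering orbit into a subcase of~(2) which lies outside $\overline{\mathrm{Cl}(v) \sqcup \mathrm{R}(v)}$. The clean observation driving the argument is that subcases~(2.3) and~(2.4) of Theorem~\ref{main:rec_wandering_pt} automatically place the orbit inside $\overline{\mathrm{Per}(v)}$ or $\overline{\mathrm{R}(v)}$, respectively, so only subcases~(2.1) and~(2.2)---together with the very presence of non-closed recurrent orbits---can obstruct the density asserted in~(1).

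For the direction (2)$\Rightarrow$(1), I would take an arbitrary $x \in \Omega(v)$ and apply Theorem~\ref{main:rec_wandering_pt} to the orbit $O$ through $x$. Case~(3) is excluded because $x \in \Omega(v)$, and case~(1) gives $x \in \overline{\mathrm{Cl}(v) \sqcup \mathrm{R}(v)}$ trivially. In case~(2), the hypothesis rules out subcases~(2.1) and~(2.2) directly; the stipulated absence of non-closed recurrent points forces $\mathrm{R}(v) = \emptyset$, which eliminates~(2.4) since $\overline{\mathrm{R}(v)} - \mathrm{R}(v) = \emptyset$; and subcase~(2.3) places $O$ inside $\overline{\mathrm{Per}(v)} - \mathrm{Per}(v) \subseteq \overline{\mathrm{Cl}(v) \sqcup \mathrm{R}(v)}$. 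In every surviving case $x$ lies in the closure of the recurrent set, yielding density.

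For (1)$\Rightarrow$(2), I would argue by contrapositive: for each of the three listed obstructions, exhibit a point of $\Omega(v)$ not in $\overline{\mathrm{Cl}(v) \sqcup \mathrm{R}(v)}$. A strict limit quasi-circuit or the blow-up of a circuit with wandering holonomy supplies, by its defining property, a non-recurrent orbit $O \subset \mathrm{int}\,\mathrm{P}(v) \cap \Omega(v)$ whose $\alpha$- and $\omega$-limit sets are purely singular; the ``strict'' (resp.\ ``wandering'') condition is designed precisely to guarantee a flow-box transversal to $O$ that meets no recurrent orbit, so $O$ sits outside $\overline{\mathrm{Cl}(v) \sqcup \mathrm{R}(v)}$. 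The presence of a non-closed recurrent orbit plays the analogous role under the reading of the recurrent set used in~(1). The main obstacle I foresee is exactly this last step: unpacking the strictness and wandering hypotheses to produce the required open transversal neighborhood free of recurrent orbits. Once that structural lemma is in hand, the theorem follows from Theorem~\ref{main:rec_wandering_pt} by direct case elimination with no further topological input.
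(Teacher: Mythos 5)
Your (2)$\Rightarrow$(1) argument is correct and is essentially the route the paper takes: Theorem~\ref{main_cor:rec_th} is deduced from Theorem~\ref{thm:rec_th}, whose sufficiency direction is exactly your case elimination in the classification theorem (cases (2.3) and (2.4) place the orbit in $\overline{\Pv}$ and $\overline{\mathrm{R}(v)}$ respectively, hence inside $\overline{\Cv\sqcup\mathrm{R}(v)}$, while (2.1), (2.2) and (2.4) are excluded by the hypotheses), combined with the trivial inclusion $\overline{\Cv\sqcup\mathrm{R}(v)}\subseteq\Omega(v)$.

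The gap is in (1)$\Rightarrow$(2), in two places. First, the clause about non-closed recurrent points: you say their presence ``plays the analogous role'', but a non-closed recurrent point is itself recurrent and lies in $\overline{\Cv\sqcup\mathrm{R}(v)}$, so it can never serve as a witness against density of recurrence in $\Omega(v)$; for the irrational flow on the torus (compact, $\Sv=\emptyset$) one has $\mathrm{R}(v)\neq\emptyset$ while the recurrent set equals $\Omega(v)$. So this part of your contrapositive cannot be completed as proposed --- note that the paper's general statement, Theorem~\ref{thm:rec_th}, from which Theorem~\ref{main_cor:rec_th} is derived, does not list this clause among the obstructions; non-closed recurrence obstructs density of \emph{closed} orbits (Proposition~\ref{thm:finite_dense}, Corollary~\ref{main:a}), not density of recurrent points. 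Second, for strict limit quasi-circuits and blow-ups of circuits with wandering holonomy, the step you yourself flag --- that the defining data produce a non-wandering point with a whole neighborhood free of recurrent orbits --- is precisely the substantive content and is not supplied; ``designed precisely to guarantee'' is not an argument, and one must exclude nearby closed orbits and nearby non-closed recurrent orbits, not merely fixed points of a first return map. In the paper that work is carried by Lemma~\ref{lem:wandering_hol}, Lemma~\ref{thm41a}, and the blow-down/end-completion reductions behind Theorem~\ref{main:rec_wandering}, and the necessity direction of Theorem~\ref{thm:rec_th} is read off from that classification; some such lemma must be proved, not postulated, before your contrapositive closes.
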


Theorem~\ref{main_cor:rec_th} follows from a more general result for surfaces with finite genus and finite ends (see Theorem~\ref{thm:rec_th}). 
Theorem~\ref{main_cor:rec_th} can be reduced into the following statement. 

%
%


\begin{main_cor}\label{main:a}
The following conditions are equivalent for a flow $v$ with finitely many connected components of the set of singular points on a compact surface:
\\
$(1)$ $\overline{\mathop{\mathrm{Cl}}(v)} = \Omega(v)$.
\\
$(2)$ There are neither non-closed recurrent points, strict limit circuits, nor circuits with wandering holonomy.
\end{main_cor}
 
The compactness of the surface is necessary (see Example~\ref{ex:counter_exam01} for details). 
The finite existence of connected components of the singular point set in Theorem~\ref{main_cor:rec_th} and Corollary~\ref{main:a} is necessary (see Lemma~\ref{lem:counter_exam01} for details). 
Notice that the non-existence of three conditions is necessary. 
In fact, the non-existence of non-closed recurrent points is necessary because of the existence of irrational rotations on tori. 
The non-existence of strict limit circuits is also necessary even for $\Omega$-stable spherical flows (see Example~\ref{ex:counter_exam_strict}). 
The necessity of the non-existence of circuits with wandering holonomy follows from an example in the proof of Lemma~\ref{lem:counter_exam02}. 
%
In addition, we show the analogous results for flows on surfaces with finite genus (see Corollary~\ref{thm:main} for details).

Applying Corollary~\ref{main:a}, we obtain the following affirmative statement for \cite[Question~1]{boyd2015diffeomorphisms} (i.e. Question~\ref{q:correspondence}), which generalizes the above result for a continuous map of a closed interval \cite{xiong1981continuous,nitecki1982maps}. 

\begin{main_cor}\label{main_cor:a}
The following conditions are equivalent for a flow $v$ with finitely many connected components of the set of singular points on a compact surface $S$:
\\
$(1)$ $\mathop{\mathrm{Cl}}(v) = \Omega(v)$.
\\
$(2)$ The closed point set $\mathop{\mathrm{Cl}}(v)$ is closed, and there are neither non-closed recurrent points, non-periodic limit circuits, nor circuits with wandering holonomy.
\\
$(3)$ The set $\mathrm{P}(v)$ of non-recurrent points are open, and there are neither non-closed recurrent points, non-periodic limit circuits, nor circuits with wandering holonomy.
In any case, we have $S = \mathop{\mathrm{Cl}}(v) \sqcup \mathrm{P}(v) = \Omega(v) \sqcup \mathrm{P}(v)$.
\end{main_cor}

The finite existence of connected components of the singular point set in Corollary~\ref{main_cor:a} is necessary (see Lemma~\ref{lem:counter_exam01} for details). 
%
By the end completion of surfaces of finite genus, we can obtain an analogous result for surfaces of finite genus (see Corollary~\ref{thm:main02} for details).

Furthermore, Theorem~\ref{main:rec_wandering_pt} implies the following characterization of the finiteness of the non-wandering set. 

\begin{main_cor}\label{main:c}
The following conditions are equivalent for a flow $v$ on a compact surface:
\\
{\rm(1)} The non-wandering set $\Omega(v)$ consists of finitely many orbits.
\\
{\rm(2)} Each recurrent orbit is a closed orbit, any limit circuits consist of at most finitely many orbits,  and there are at most finitely many closed orbits and limit circuits. 
\\
{\rm(3)} Each recurrent orbit is a closed orbit, any limit circuits consist of at most finitely many orbits,  and there are at most finitely many closed orbits and strict limit circuits. 
\\
In any case, the non-wandering set $\Omega(v)$ consists of closed orbits, strict limit circuits, and circuits with wandering holonomy. 
\end{main_cor}

The present paper consists of eight sections.
In the next section, as preliminaries, we introduce fundamental concepts.
In \S 3, we show a classification of a non-recurrent point under the non-existence of non-closed recurrent orbits (Lemma~\ref{lem:wandering_pt}), which is a key lemma of this paper. 
In \S 4, we topologically characterize orbits for flows with finitely many singular points on compact surfaces.
In \S 5, we generalized the results in the previous section into flows with finitely many connected components of the singular point set on surfaces with finite genus and finite ends.
In \S 6, a characterization of the finiteness of the non-wandering set is described, and correspondence between the closed point set and the non-wandering set is characterized. 
In \S 7, some examples of flows are described to show the necessity of conditions in Theorem~\ref{main_cor:rec_th} and Corollary~\ref{main:a} and lemmas. 
%
%
In the final section, we observe the density for non-wandering flows on compact surfaces. 

\section{Preliminaries}

\subsection{Topological notion}
Denote by $\overline{A}$ the closure of a subset $A$ of a topological space, by $\mathrm{int}A$ the interior of $A$, and by $\partial A := \overline{A} - \mathrm{int}A$ the boundary of $A$, where $B - C$ is used instead of the set difference $B \setminus C$ when $B \subseteq C$.
A boundary component of a subset $A$ is a connected component of the boundary of $A$. 
A subset is locally dense if its closure has a nonempty interior. 
Recall that a boundary component of a subset $A$ is a connected component of the boundary of $A$. 
%
%
\subsubsection{Curves and loops}
A curve is a continuous mapping $C: I \to X$ where $I$ is a non-degenerate connected subset of a circle $\mathbb{S}^1$. 
Here a non-degenerate subset is a subset containing at least two points. 
A curve is simple if it is injective.
We also denote by $C$ the image of a curve $C$.
Denote by $\partial C := C(\partial I)$ the boundary of a curve $C$, where $\partial I$ is the boundary of $I \subset \mathbb{S}^1$. Put $\mathrm{int} C := C \setminus \partial C$. 
A simple curve is a simple closed curve if its domain is $\mathbb{S}^1$ (i.e. $I = \mathbb{S}^1$).
A simple closed curve is also called a loop. 
Two disjoint loops are parallel if there is an open annulus whose boundary is the union of the two loops. 
An arc is a simple curve whose domain is an interval. 
An orbit arc is an arc contained in an orbit. 

\subsubsection{Surfaces}
By a surface,  we mean a two-dimensional paracompact manifold, that does not need to be orientable.

\subsection{Notions of dynamical systems}
A flow is a continuous $\R$-action on a manifold.
From now on, we suppose that flows are on surfaces unless otherwise stated.
Let $v : \R \times S \to S$ be a flow on a surface $S$.
For $t \in \R$, define $v_t : S \to S$ by $v_t := v(t, \cdot )$.
For a point $x$ of $S$, we denote by $O(x)$ the orbit of $x$, $O^+(x)$ the positive orbit (i.e. $O^+(x) := \{ v_t(x) \mid t > 0 \}$), and $O^-(x)$ the negative orbit (i.e. $O^-(x) := \{ v_t(x) \mid t < 0 \}$).
Recall that a point $x$ of $S$ is singular if $x = v_t(x)$ for any $t \in \R$, is non-singular if $x$ is not singular, and is periodic if there is a positive number $T > 0$ such that $x = v_T(x)$ and $x \neq v_t(x)$ for any $t \in (0, T)$.
An orbit is singular (resp. periodic) if it contains a singular (resp. periodic) point. 
An orbit is closed if it is singular or periodic.
Denote by $\mathop{\mathrm{Sing}}(v)$ the set of singular points and by $\mathop{\mathrm{Per}}(v)$  the union of periodic orbits.
The union $\bm{\mathop{\mathrm{Cl}}(v)} := \mathop{\mathrm{Sing}}(v) \sqcup \mathop{\mathrm{Per}}(v)$ is the union of closed orbits, where $\sqcup$ denotes a disjoint union. 
Recall that the $\omega$-limit (resp. $\alpha$-limit)  set of a point $x$ is $\omega(x) := \bigcap_{n\in \mathbb{R}}\overline{\{v_t(x) \mid t > n\}}$ (resp.  $\alpha(x) := \bigcap_{n\in \mathbb{R}}\overline{\{v_t(x) \mid t < n\}}$).
For an orbit $O$, define $\omega(O) := \omega(x)$ and $\alpha(O) := \alpha(x)$ for some point $x \in O$.
Note that an $\omega$-limit (resp. $\alpha$-limit) set of an orbit is independent of the choice of point in the orbit. 
A point $x$ of $S$ is {\bf recurrent} if $x \in \omega(x) \cup \alpha(x)$.
Denote by $\mathrm{R}(v)$ the set of non-closed recurrent points.
A point is non-wandering if for any its neighborhood $U$ and for any positive number $N$, there is a number $t \in \mathbb{R}$ with $|t| > N$ such that $v_t(U) \cap U \neq \emptyset$.
Denote by $\bm{\Omega (v)}$ the set of non-wandering points, called the non-wandering set. 
A flow is non-wandering if every point is non-wandering.  

A subset $A$ is positive (resp. negative) invariant if $v_t(A) \subseteq A$ for any $t \in \R_{>0}$ (resp. $t \in \R_{<0}$). 
A subset is invariant (or saturated) if it is a union of orbits. 
The saturation of a subset is the union of orbits intersecting it.
Denote by $v(A)$ the saturation of a subset $A$.
Then $v(A) = \bigcup_{a \in A} O(a)$.
A nonempty closed invariant subset is minimal if it contains no proper nonempty closed invariant subsets. 

A non-singular orbit is a separatrix if there is a singular point which is either its $\alpha$-limit set or its $\omega$-limit set.

\subsubsection{Topological properties of orbits}

An orbit is proper if it is embedded and exceptional if it is neither proper nor locally dense. 
A point is proper (resp. locally dense, exceptional) if its orbit is proper (resp. locally dense, exceptional).
Denote by $\mathrm{LD}(v)$ (resp. $\mathrm{E}(v)$, $\mathrm{P}(v)$) the union of locally dense orbits (resp. exceptional orbits, non-closed proper orbits). 
Note that an orbit on a paracompact manifold (e.g. a surface) is proper if and only if it has a neighborhood in which the orbit is closed \cite{yokoyama2019properness}.  
This implies that a non-recurrent point is proper and so that a non-proper point is recurrent.  
In \cite[Theorem~VI]{cherry1937topological}, Cherry showed that the closure of a non-closed recurrent orbit $O$ of a flow on a manifold contains uncountably many non-closed recurrent orbits whose closures are $\overline{O}$. 
This means that each non-closed recurrent orbit of a flow on a manifold has no neighborhood in which the orbit is closed, and so is not proper. 
In particular, a non-closed proper orbit is non-recurrent. 
Therefore the union $\mathrm{P}(v)$ of non-closed proper orbits is the set of non-recurrent points and that $\mathrm{R}(v) = \mathrm{LD}(v) \sqcup \mathrm{E}(v)$.
Hence we have a decomposition $S = \mathop{\mathrm{Sing}}(v) \sqcup \mathop{\mathrm{Per}}(v) \sqcup \mathrm{P}(v) \sqcup \mathrm{R}(v) = \mathop{\mathrm{Cl}}(v) \sqcup \mathrm{P}(v) \sqcup \mathrm{R}(v)$. 

Recall that a non-wandering flow on a compact surface has no exceptional orbits (i.e. $\mathrm{E}(v) = \emptyset$) because of Lemma~\ref{lem:top23} stated below.


\subsubsection{Flox box}
Let $I, J$ be intervals which are either $(0,1)$, $(0,1]$, $[0,1)$, or $[0,1]$ and $\mathbb{D} := I \times J \subset \R^2$ be a disk whose orbits are of form $I \times \{ t \}$ for some $t \in J$. 
Then $D$ is called a trivial flow box. 
A disk $B$ is a flow box if there is an annular \nbd $U_B$ of the boundary $\partial B$ such that the intersection $B \cap U_B$ is the intersection $D \cap U_D$ of a trivial flow box $D$ and its neighborhood $U_D$ of the boundary $\partial D$ up to topological equivalence as in Figure~\ref{fig:flowbox}. 
In other words, a disk $B$ is a flow box if there is a homeomorphism $f \colon B \cap U_B \to D \cap U_D$ such that the images of orbit arcs are orbit arcs. 
\begin{figure}
\begin{center}
\includegraphics[scale=0.35]{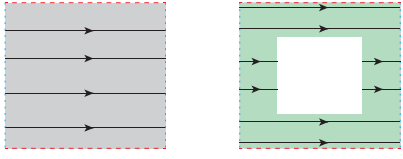}
\end{center}
\caption{Left, a trivial flow box; right, the intersection of a flow box and its small neighborhood of the boundary.}
\label{fig:flowbox}
\end{figure}

\subsubsection{Circuits}
A trivial circuit is a singular point. 
An open annular subset $\mathbb{A}$ of a surface is a collar of a singular point $x$ if the union $\mathbb{A} \sqcup \{ x \}$ is a neighborhood of $x$. 
A trivial circuit $x$ is attracting (resp. repelling) if there is its collar which is contained in the stable (resp. unstable) manifold of $x$.  
In other words, an attracting trivial circuit is either a $\partial$-source or a source, and a repelling trivial circuit is either a $\partial$-sink or a sink. 
Here a $\partial$-source (resp. $\partial$-sink) is a singular point whose lift to the double of the surface is a source (resp. sink) with respect to the lift of the flow.  
By a cycle or a periodic circuit, we mean a periodic orbit.

\begin{definition}
An image of an oriented circle by a continuous orientation-preserving mapping is a {\bf non-trivial circuit} if it  is either a cycle or a directed graph but not a singleton and which is the union of separatrices and finitely many singular points.
\end{definition}

A {\bf circuit} is either a trivial or non-trivial periodic circuit.
An open annular subset $\mathbb{A}$ of a surface is a collar of a non-trivial circuit $\gamma$ if $\gamma$ is a boundary component of $\mathbb{A}$ and there is a neighborhood $U$ of $\gamma$ such that $\mathbb{A}$ is a connected component of the complement $U - \gamma$. 
A circuit $\gamma$ is a semi-attracting (resp. semi-repelling) circuit with respect to a small collar $\A$ if $\omega(x)  = \gamma$ (resp. $\alpha(x) = \gamma$) and $O^+(x) \subset \A$ (resp. $O^-(x) \subset \A$) for any point $x \in \A$. 
Then $\A$ is called an attracting (resp. a repelling) collar basin of $\gamma$.

\begin{definition}
A non-trivial circuit $\gamma$ is a {\bf limit circuit} if it is a semi-attracting or semi-repelling circuit. 
\end{definition}

Notice that any limit circuits consist of closed orbits and non-recurrent orbits (see Figure~\ref{fig:limit_circuit}. 
\begin{figure}
\begin{center}
\includegraphics[scale=0.28]{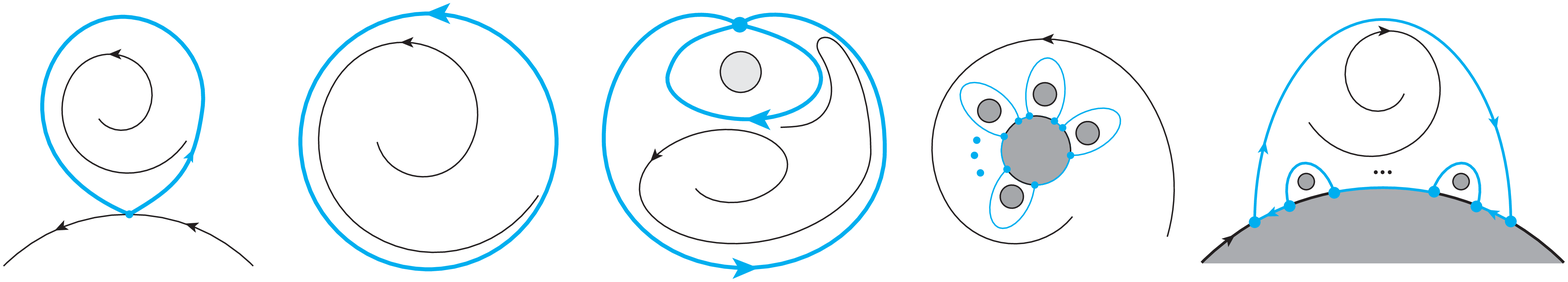}
\end{center}
\caption{Examples of limit circuits.}
\label{fig:limit_circuit}
\end{figure} 
A limit cycle is a limit circuit in $\mathop{\mathrm{Per}}(v)$.
A non-trivial circuit $\gamma$ is {\bf one-sided} if for any small neighborhood $U$ of $\gamma$ there is a collar $V \subset U$ of $\gamma$ such that the union $V \sqcup \gamma$ is a neighborhood of some point in $\mathrm{P}(v) \cap \gamma$. 
Notice that a non-trivial circuit $\gamma$ is not one-sided if and only if there is a small neighborhood $U$ of $\gamma$ such that the union $V \sqcup \gamma$ for any collar $V \subset U$ of $\gamma$ is not a neighborhood of any point in $\mathrm{P}(v) \cap \gamma$. 
For a circuit $\mu$ which is a simple closed curve, notice that the circuit $\mu$ is one-sided if and only if it is either a boundary component of a surface or has a small neighborhood which is a M\"obius band, and that the circuit $\mu$ is not one-sided if and only if it has an open small annular neighborhood $\mathbb{A}$ such that the complement $\mathbb{A} - \mu$ consists of two open annuli.

\subsection{Strict limit circuit and circuit with wandering holonomy}
To characterize the density of closed orbits in the non-wandering set, we introduce strict limit circuits and circuits with wandering holonomy as follows. 

\begin{definition}
A non-trivial non-periodic limit circuit $\gamma$ is a {\bf strict limit circuit} if either $\gamma$ is one-sided or there are a separatrix $\mu \subseteq \gamma$ and a transverse open arc $T$ intersecting $\mu$ such that $f_v|_{T_1}$ is either attracting or repelling, and that $f_v|_{T_2}$ has no fixed points,  where $f_v :T \to T$ is the first return map, $T_1$ and $T_2$ are the two connected components of $T \setminus \mu$.
\end{definition}
\begin{figure}
\begin{center}
\includegraphics[scale=0.35]{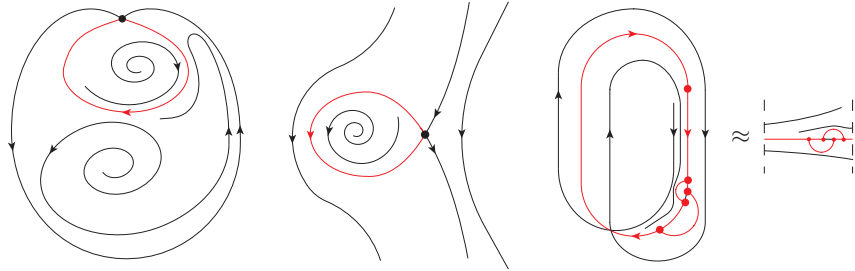}
\end{center}
\caption{Examples of strict limit circuits.}
\label{fig:strict_limit_circuits}
\end{figure}
Notice that any one-sided strict limit circuit is either attracting or repelling, as in Figure~\ref{fig:strict_limit_circuits}. 
Note that there is a strict limit circuit with infinitely many edges and such that any non-trivial circuit in it contains non-closed proper orbits as in Figure~\ref{NAC02}.
\begin{figure}
\begin{center}
\includegraphics[scale=0.2]{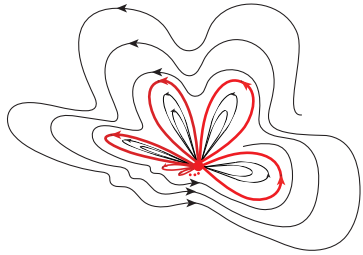}
\end{center}
\caption{A strict limit circuit that consists of a degenerate singular point and infinitely many connecting separatrices, and its \nbd which consists of a singular point and non-recurrent orbits.}
\label{NAC02}
\end{figure} 


\begin{definition}
A non-trivial circuit $\gamma$ is a {\bf circuit with wandering holonomy} if there are a non-singular point $x \in \gamma$ and arbitrarily small open transverse arc $I$ containing $x$ such that the first return map on $I$ is orientation reversing, the domain is nonempty, and the intersection of the domain and the codomain is empty. 
\end{definition}

For instance, there are flows with circuits with wandering holonomy as in Figure~\ref{nondir} and Figure~\ref{g2-ex}. 
\begin{figure}
\begin{center}
\includegraphics[scale=0.2]{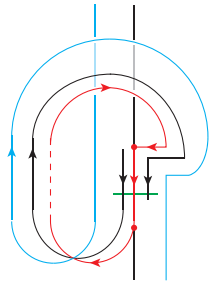}
\end{center}
\caption{An example of a circuit with wandering holonomy.}
\label{nondir}
\end{figure} 
We have the following observation. 

\begin{lemma}\label{lem:wandering_hol}
Let $v$ be a flow on a surface $S$. 
Then each circuit with wandering holonomy is not periodic and contains points in $\Omega(v) - \overline{\Cv}$. 
\end{lemma}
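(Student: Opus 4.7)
The plan is to split the statement into its two assertions---non-periodicity of $\gamma$ and the existence of a point in $\gamma\cap(\Omega(v)-\overline{\Cv})$---and to use the two features of the wandering-holonomy hypothesis for different purposes. The nonempty $\mathrm{dom}(f_v)$ will produce near-returns witnessing the non-wandering property, while the disjointness $\mathrm{dom}(f_v)\cap\mathrm{cod}(f_v)=\emptyset$ is invoked twice, once to forbid $\gamma$ itself from being periodic and once to forbid periodic orbits in any transverse arc at $x$.

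For non-periodicity, I argue by contradiction: if $\gamma$ were a cycle, then the distinguished non-singular point $x\in\gamma$ would be periodic, and the flow-box theorem around the periodic orbit $O(x)$ would make the first return map $f_v$ on any sufficiently small transverse arc through $x$ defined near $x$ and fix $x$; hence $x\in\mathrm{dom}(f_v)\cap\mathrm{cod}(f_v)$, contradicting the hypothesis. Consequently $\gamma$ is of graph type and $O(x)$ is a separatrix, which in particular is not periodic.

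For the second part, let me first show $x\in\Omega(v)$. Fix once and for all a flow box $B\cong I_0\times(-\epsilon_0,\epsilon_0)$ around $x$, which gives a lower bound $t\geq\epsilon_0$ on the first return time to any transverse sub-arc $I\subseteq I_0$. Using the hypothesis, I choose transverse arcs $I_n\ni x$ with diameters tending to zero, and pick $y_n\in I_n\cap\mathrm{dom}(f_v)$ with first return time $t_n\geq\epsilon_0$; both $y_n$ and $v_{t_n}(y_n)$ lie in $I_n$, hence converge to $x$. If some subsequence of $(t_n)$ were bounded, continuity of $v$ would yield $v_{t^*}(x)=x$ for some $t^*\geq\epsilon_0$, making $O(x)$ periodic and contradicting the first step. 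Therefore $t_n\to\infty$, and given any neighborhood $U\ni x$ and any $N>0$, picking $n$ large enough that $I_n\subseteq U$ and $t_n>N$ yields $v_{t_n}(U)\cap U\neq\emptyset$, so $x\in\Omega(v)$.

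To conclude $x\notin\overline{\Cv}=\overline{\Sv}\cup\overline{\Pv}$, note that $\Sv$ is closed and $x$ is non-singular, so $x\notin\overline{\Sv}$. If some periodic point $p$ lay on a transverse arc $I$ from the hypothesis, then $p\in\mathrm{dom}(f_v)$ and $q:=f_v(p)\in\mathrm{cod}(f_v)$; since $O(q)=O(p)$ is periodic, $q$ returns to $I$ as well, so $q\in\mathrm{dom}(f_v)\cap\mathrm{cod}(f_v)$, contradicting the disjointness. Hence $I$ contains no periodic point, and since every orbit through the flow box $B$ crosses $I_0$, no periodic orbit meets $B$, giving $x\notin\overline{\Pv}$. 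This places $x$ in $\gamma\cap(\Omega(v)-\overline{\Cv})$. The main obstacle is the divergence $t_n\to\infty$ in the non-wandering argument: the nonempty-domain hypothesis by itself provides only $t_n\geq\epsilon_0$, and upgrading to $t_n\to\infty$ requires both the compactness/continuity limit argument and the non-periodicity of $O(x)$ established in the first step.
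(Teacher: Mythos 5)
Your argument is correct and follows essentially the same route as the paper's proof: non-periodicity via the flow box theorem around a would-be cycle contradicting $\mathrm{dom}\cap\mathrm{cod}=\emptyset$, membership in $\Omega(v)$ from the nonempty domains of the return maps on arbitrarily small transverse arcs, and exclusion from $\overline{\Cv}$ by producing a neighborhood of $x$ free of singular and periodic points; your extra step showing the return times $t_n\to\infty$ is a welcome refinement of the same argument, since the paper's definition of non-wandering requires arbitrarily large times and the paper leaves this implicit. One small repair: in the final step the flow box should be taken with its transversal fiber a sub-arc of a hypothesis arc $I$ (not the fixed fiber $I_0$), because the exclusion of periodic points was established only for the arcs carrying the wandering-holonomy property.
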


\begin{proof}
Let $\gamma$ be a circuit with wandering holonomy.  
We claim that $\gamma$ is non-periodic. 
Indeed, assume that $\gamma$ is periodic. 
Fix a small open transverse arc $I$ intersecting $\gamma$. 
By the flow box theorem (cf. \cite[Theorem 1.1, p.45]{aranson1996introduction}) to $\gamma$, the intersection of the domain and the codomain for the first return map to $I$ contains a nonempty open interval, which contradicts that the intersection of the domain and the codomain is empty. 

Fix a non-singular point $x \in \gamma$ as in the definition of circuit with wandering holonomy. 
The flow box theorem to $x$ implies that there is a \nbd of $x$ containing no singular points. 
Since the intersection of the domain and the codomain for the first return map to the arbitrarily small open transverse arc $I$ containing $x$ contains a nonempty open interval, the point $x$ is non-wandering. 
The empty intersection of the domain and the codomain of the first return map implies that there is a \nbd of $x$ containing no periodic points. 
This means that $x \in \Omega(v) - \overline{\Cv}$. 
\end{proof}

\section{Topological characterization of density of periodic orbits in the non-wandering set}

This section topologically characterizes the denseness of closed orbits for a flow with finitely many singular points on compact surfaces. 
We recall the following statement, which is used to show several results. 

\begin{lemma}[Lemma~2.3 \cite{yokoyama2016topological}]\label{lem:top23}
 Let $v$ be a flow on a compact surface $S$. 
 Then $\overline{\Cv \sqcup \mathrm{LD}(v)} \cap \mathrm{E}(v) = \emptyset$ and $\overline{\Cv \sqcup \mathrm{E}(v)} \cap \mathrm{LD}(v) = \emptyset$. 
Moreover, we have $\mathrm{E}(v) \subseteq \mathrm{int} \overline{\mathrm{P}(v)}$.  
\end{lemma}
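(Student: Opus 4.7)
My plan leverages the orbit-type decomposition $S = \Cv \sqcup \mathrm{P}(v) \sqcup \mathrm{LD}(v) \sqcup \mathrm{E}(v)$ together with the classical structure theory of Cherry and Maier for recurrent orbits of surface flows. The two key inputs are: (a) a quasi-minimal set $Q := \overline{O(y)}$ with $y \in \mathrm{LD}(v)$ admits no exceptional orbits, and on a compact surface there are only finitely many distinct quasi-minimal sets; (b) every closure of an exceptional orbit is a nowhere-dense, minimal-like set containing no closed orbit, and on a compact surface only finitely many such distinct closures occur. Since all three sets involved are flow-invariant, it suffices to work in a flow box around any non-singular candidate point and analyse the first-return map on a transverse arc.

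For the first inclusion $\overline{\Cv \sqcup \mathrm{LD}(v)} \cap \mathrm{E}(v) = \emptyset$, I would suppose for contradiction that $x \in \mathrm{E}(v)$ is a limit of a sequence $y_n \in \Cv \sqcup \mathrm{LD}(v)$. Singular $y_n$ are ruled out by continuity of $v_t$ (they would force $x$ to be singular, hence in $\Cv$, not $\mathrm{E}(v)$). Periodic $y_n$ converging to $x$ are ruled out because any accumulation of cycles would produce, via two nearby parallel cycles cobounding an annulus shrinking onto $x$, a trapping region in which Poincar\'e--Bendixson forces $\omega(x)$ to be a cycle, a singular point, or a separatrix cycle; but $\omega(x) = \overline{O(x)}$ is the exceptional closure, contradicting input (b). Locally dense $y_n$ converging to $x$ are ruled out by passing to a subsequence inside a single quasi-minimal class $Q$, yielding $x \in Q$ in contradiction with input (a).

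For $\overline{\Cv \sqcup \mathrm{E}(v)} \cap \mathrm{LD}(v) = \emptyset$, let $y \in \mathrm{LD}(v)$ and consider the open invariant set $U := \mathrm{int}\,\overline{O(y)}$, which is nonempty by local density and, by the standard structure of quasi-minimal sets on surfaces, contains the orbit $O(y)$ itself. Every orbit entirely contained in $U$ lies in $\overline{O(y)}$, and by the classification of orbits in a quasi-minimal set is either a non-recurrent proper orbit or a locally dense orbit with the same closure $\overline{O(y)}$. Thus $U \cap \mathrm{E}(v) = \emptyset$ by input (a), and $U \cap \Cv = \emptyset$ because a closed orbit equals its own closure and cannot be a proper invariant subset of a strictly larger closed invariant set realised as the closure of an orbit not equal to it. This exhibits an open neighbourhood of $y$ disjoint from $\Cv \sqcup \mathrm{E}(v)$.

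Finally, for $\mathrm{E}(v) \subseteq \mathrm{int}\,\overline{\mathrm{P}(v)}$: given $x \in \mathrm{E}(v)$, the first two parts supply an open neighbourhood $W$ of $x$ disjoint from $\Cv \sqcup \mathrm{LD}(v)$, so $W \subseteq \mathrm{E}(v) \sqcup \mathrm{P}(v)$. By the finiteness asserted in input (b), $W \cap \mathrm{E}(v)$ is contained in a finite union of nowhere-dense exceptional closures, hence is nowhere dense in $W$; therefore $W \cap \mathrm{P}(v)$ is open and dense in $W$, whence $W \subseteq \overline{\mathrm{P}(v)}$ and $x \in \mathrm{int}\,\overline{\mathrm{P}(v)}$. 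The principal obstacle will be making rigorous the Poincar\'e--Bendixson step above: one must construct a trapping annular region from two nearby cycles shrinking onto $x$ (including on possibly non-orientable surfaces, where one passes to an orientation double cover), and then confirm that any orbit trapped in such an annulus has a periodic, singular, or separatrix-cycle $\omega$-limit, contradicting the exceptional nature of $\omega(x) = \overline{O(x)}$.
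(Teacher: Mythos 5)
First, note that the paper does not prove this statement at all: it is imported verbatim as \cite[Lemma~2.3]{yokoyama2016topological}, so there is no internal proof to match, and you are attempting more than the paper does. Judged on its own terms, your outline has the right classical ingredients (closedness of $\Sv$, Maier's finiteness of quasi-minimal sets, nowhere-density of exceptional closures, and the deduction of $\mathrm{E}(v)\subseteq\mathrm{int}\,\overline{\mathrm{P}(v)}$ from the first two assertions is fine), but two steps are genuinely broken. The most serious is your proof of $\overline{\Cv\sqcup\mathrm{E}(v)}\cap\mathrm{LD}(v)=\emptyset$: the claim ``$U\cap\Cv=\emptyset$ because a closed orbit equals its own closure and cannot be a proper invariant subset of a strictly larger closed invariant set realised as the closure of another orbit'' is false as a principle, and false in the simplest example. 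Stop an irrational linear flow on $\T^2$ at one point $p$: then every nonsingular orbit is dense, so for $y\in\mathrm{LD}(v)$ one has $U=\mathrm{int}\,\overline{O(y)}=\T^2\ni p\in\Cv$. Closed orbits (in particular singular points) can perfectly well sit inside the interior of the closure of a locally dense orbit, so $U$ is not the neighbourhood you need, and your final sentence ``this exhibits an open neighbourhood of $y$ disjoint from $\Cv\sqcup\mathrm{E}(v)$'' does not follow. The singular part must instead be dismissed by closedness of $\Sv$, and the periodic part needs a real argument that cycles cannot accumulate on a nontrivially recurrent point (or the classical fact that a quasi-minimal set contains no periodic orbit), neither of which your $U$-argument supplies.

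The second gap is in the periodic case of the first assertion. The lemma carries no finiteness hypothesis on $\Sv$ (and the paper applies it to flows with arbitrary singular sets, e.g.\ in Lemma~\ref{thm41} and Proposition~\ref{interior}), so you may not invoke the generalized Poincar\'e--Bendixson trichotomy ``cycle, singular point, or separatrix cycle'' inside your trapping region: that statement requires finitely many singular points, which you are not given. What is actually needed, and true without any finiteness, is that an invariant annulus (a planar domain) contains no nontrivially recurrent orbit, which contradicts $x\in\mathrm{E}(v)$ directly. Moreover, the existence of ``two nearby parallel cycles cobounding an annulus shrinking onto $x$'' is asserted rather than constructed: accumulating cycles need not be pairwise parallel, may approach $x$ from one side of a transversal only, and the annulus between two parallel cycles through points near $x$ need not contain $x$; making this rigorous requires the kind of extraction of parallel subfamilies via finiteness of genus that the paper carries out elsewhere (compare the argument in Lemma~\ref{lem:wandering_h}). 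Finally, your ``input (a)'' (a locally dense quasi-minimal set contains no exceptional orbit) is doing substantial work and is of essentially the same depth as the statement being proved; it should be either proved or given a precise classical reference rather than assumed.
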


Note that the compactness in the previous lemma is necessary (see Example ~\ref{ex:counter_exam01} for details).

\subsection{Necessary conditions}
We have the following necessary conditions for the denseness of closed orbits with respect to a flow on a surface. 

\begin{proposition}\label{prop:necessary_dense}
Let $v$ be a flow on a compact surface. 
If $\overline{\mathop{\mathrm{Cl}}(v)} = \Omega(v)$, then there are neither non-closed recurrent points, strict limit circuits, nor circuits with wandering holonomy.
\end{proposition}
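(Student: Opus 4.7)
The plan is to prove the contrapositive: assume $\overline{\mathop{\mathrm{Cl}}(v)} = \Omega(v)$ and, for each of the three forbidden objects, suppose it exists and derive a point of $\Omega(v) \setminus \overline{\mathop{\mathrm{Cl}}(v)}$, contradicting the hypothesis. This splits naturally into three cases, two of which fall out of results already in the excerpt and one of which does the real work.

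First I would dispatch the trivial cases. For a non-closed recurrent point $x$, one has $x \in \mathrm{R}(v) = \mathrm{LD}(v) \sqcup \mathrm{E}(v) \subseteq \Omega(v)$; Lemma~\ref{lem:top23} gives $\overline{\mathop{\mathrm{Cl}}(v)} \cap (\mathrm{LD}(v) \cup \mathrm{E}(v)) = \emptyset$, so $x \notin \overline{\mathop{\mathrm{Cl}}(v)}$, a contradiction. For a circuit $\gamma$ with wandering holonomy, Lemma~\ref{lem:wandering_hol} already supplies a point of $\Omega(v) \setminus \overline{\mathop{\mathrm{Cl}}(v)}$ directly. So the substantive task is the strict limit circuit case.

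Suppose $\gamma$ is a strict limit circuit with, say, attracting collar basin $\A$. The first observation I would record is that every non-singular $z \in \gamma$ lies in $\Omega(v)$: since $\gamma$ is a boundary component of $\A$, any neighborhood $U$ of $z$ meets $\A$, and for any $y \in U \cap \A$ we have $\omega(y) = \gamma \ni z$, so $v_t(y) \in U$ for arbitrarily large $t$, whence $v_t(U) \cap U \neq \emptyset$. It remains to produce such a $z$ with a neighborhood disjoint from $\mathop{\mathrm{Cl}}(v)$, and here I would split on the definition. In the non-one-sided subcase, pick any $z \in \mu$ where $\mu \subseteq \gamma$ is the distinguished separatrix. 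By the flow box theorem, a small flow box $B$ at $z$ is parametrised by the transverse arc $T$, and a closed orbit meeting $B$ would force a fixed point of the first return map $f_v$ on $T$. On $T_2$ there are no fixed points by hypothesis; on $T_1$, the attracting or repelling behaviour of $f_v|_{T_1}$ has $T \cap \mu$ as its only possible accumulation of iterates, but $T \cap \mu$ cannot be a fixed point because $\mu$ is a separatrix (non-periodic). Hence $B$ meets no closed orbit, and since $\mu$ is non-singular, shrinking $B$ removes any singular points too, so $z \in \Omega(v) \setminus \overline{\mathop{\mathrm{Cl}}(v)}$. In the one-sided subcase, I would instead use the distinguished $p \in \mathrm{P}(v) \cap \gamma$ whose one-sided collar $V$ together with $\gamma$ forms a neighborhood; by the remark following the definition the one-sided strict limit circuit is attracting or repelling, so orbits in $V$ converge to $\gamma$ and are non-periodic, giving a neighborhood of $p$ free of closed orbits.

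The main obstacle is the non-one-sided subcase, specifically the argument that the attracting/repelling behaviour of $f_v|_{T_1}$ produces no fixed points of $f_v$ near $\mu$. This hinges on the fact that $\mu$ is a separatrix and $\gamma$ is non-periodic, so the endpoint $T \cap \mu$ is a topological limit of iterates on $T_1$ but not itself a genuine fixed point of the return map. Making this rigorous may require invoking the first-return structure along $\mu$ carefully, perhaps using the flow box theorem along a full arc of $\mu$ rather than just at one point, to ensure no periodic orbit at all crosses a one-sided neighborhood of $\mu$ in the surface.
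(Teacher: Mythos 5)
Your treatment of the first two objects coincides with the paper's: non-closed recurrent points are excluded exactly as in Lemma~\ref{thm41} (via Lemma~\ref{lem:top23} and the closedness of $\mathop{\mathrm{Sing}}(v)$), and circuits with wandering holonomy are excluded by quoting Lemma~\ref{lem:wandering_hol}, which is precisely the first line of Lemma~\ref{thm41a}. The divergence is the strict limit circuit case: the paper first deduces $\mathrm{R}(v)=\emptyset$ and $S=\mathop{\mathrm{Cl}}(v)\sqcup\mathrm{P}(v)$ and then asserts, essentially ``by definition'', that a strict limit circuit contains a point of $\Omega(v)\cap\mathrm{int}\,\mathrm{P}(v)=\Omega(v)-\overline{\mathop{\mathrm{Cl}}(v)}$. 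You try to substantiate that assertion, which is the right instinct, but your argument for it has a genuine gap.

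The gap is the reduction ``a closed orbit meeting $B$ would force a fixed point of the first return map $f_v$ on $T$.'' A periodic orbit through the flow box crosses $T$ at some point $p$, but its first return to $T$ need not be $p$: what you get is a periodic point of $f_v$ of some period $k\geq 1$, and for $k\geq 2$ the intermediate crossings may lie in $T_1$ far from $z$ (outside the collar basin, where ``attracting/repelling towards $z$'' controls nothing), in $T_2$ away from $z$, or alternate between the two sides. The hypotheses only forbid fixed points of $f_v|_{T_2}$ and prescribe $f_v|_{T_1}$; since the domain of $f_v$ is in general disconnected, ``periodic implies fixed'' monotonicity arguments do not apply directly, and on a non-orientable surface the return map need not preserve orientation, so higher-period configurations are not excluded by what you wrote. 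Note also that the difficulty you flag at the end — whether $T\cap\mu$ could be a fixed point — is not where the problem lies: $\mu$ is a separatrix, hence a non-closed orbit, so the point $T\cap\mu$ is trivially not periodic. What does come for free is the collar-basin side, where points of $T$ near $z$ have $\omega$- (or $\alpha$-) limit equal to $\gamma$ and hence cannot be periodic; the missing piece is ruling out periodic orbits accumulating at $z$ from the other side, and that needs an actual argument about the return structure (for instance, analysing the crossing of such a periodic orbit nearest to $z$, or an annulus/waterfall-type argument in the spirit of Lemma~\ref{lem:lc_transersal}), not the fixed-point reduction. A minor further point: in the one-sided subcase you lean on the paper's unproven remark that a one-sided strict limit circuit is attracting or repelling, and you implicitly identify the one-sided collar $V$ with (a sub-collar of) the basin; that identification should at least be stated.
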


This proposition follows from Lemma~\ref{thm41} and Lemma~\ref{thm41a}. 

\begin{lemma}\label{thm41}
Let $v$ be a flow on a compact surface $S$.
If $\overline{\mathop{\mathrm{Cl}}(v)} = \Omega(v)$, then $\mathrm{R}(v) = \emptyset$ $( \mathrm{i.e.}$ $S = \mathop{\mathrm{Cl}}(v) \sqcup \mathrm{P}(v)$ $)$. 
\end{lemma}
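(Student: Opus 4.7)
The plan is to argue by contradiction, exploiting the decomposition $\mathrm{R}(v) = \mathrm{LD}(v) \sqcup \mathrm{E}(v)$ together with Lemma~\ref{lem:top23}. Suppose for contradiction that $\mathrm{R}(v)$ is nonempty, and pick a non-closed recurrent point $x$. Since $x \in \omega(x) \cup \alpha(x)$ and both limit sets are contained in $\Omega(v)$, we get $x \in \Omega(v)$; by the hypothesis this gives $x \in \overline{\Cv}$.

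Now split into the two sub-cases offered by the decomposition. If $x \in \mathrm{E}(v)$, then the first clause of Lemma~\ref{lem:top23}, namely $\overline{\Cv \sqcup \mathrm{LD}(v)} \cap \mathrm{E}(v) = \emptyset$, forces $\overline{\Cv} \cap \mathrm{E}(v) = \emptyset$, contradicting $x \in \overline{\Cv}$. If instead $x \in \mathrm{LD}(v)$, then the second clause $\overline{\Cv \sqcup \mathrm{E}(v)} \cap \mathrm{LD}(v) = \emptyset$ analogously gives $\overline{\Cv} \cap \mathrm{LD}(v) = \emptyset$, again contradicting $x \in \overline{\Cv}$. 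Either way we reach a contradiction, so $\mathrm{R}(v) = \emptyset$, which combined with the global decomposition $S = \Cv \sqcup \mathrm{P}(v) \sqcup \mathrm{R}(v)$ recalled in the preliminaries yields $S = \Cv \sqcup \mathrm{P}(v)$, as claimed.

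The argument is essentially a direct invocation of Lemma~\ref{lem:top23}, so there is no real obstacle; the only conceptual point to verify carefully is the elementary inclusion $\omega(x) \cup \alpha(x) \subseteq \Omega(v)$, which places the hypothetical recurrent point inside the non-wandering set so that the density assumption $\Omega(v) = \overline{\Cv}$ can be applied. The compactness of $S$ is used only implicitly, through its role in the hypothesis of Lemma~\ref{lem:top23}.
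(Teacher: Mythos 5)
Your proof is correct and follows essentially the same route as the paper: the inclusion $\mathrm{R}(v) \subseteq \Omega(v) = \overline{\mathop{\mathrm{Cl}}(v)}$ combined with Lemma~\ref{lem:top23} (which immediately gives $\overline{\mathop{\mathrm{Cl}}(v)} \cap \mathrm{E}(v) = \emptyset$ and $\overline{\mathop{\mathrm{Cl}}(v)} \cap \mathrm{LD}(v) = \emptyset$, hence $\overline{\mathop{\mathrm{Cl}}(v)} \cap \mathrm{R}(v) = \emptyset$). The only cosmetic difference is that you phrase it as a contradiction with a case split over $\mathrm{LD}(v)$ and $\mathrm{E}(v)$, while the paper computes the empty intersection directly using the closedness of $\mathop{\mathrm{Sing}}(v)$.
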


\begin{proof}
Suppose that $\overline{\mathop{\mathrm{Cl}}(v)} = \Omega(v)$. 
Since recurrent orbits are non-wandering, we have $\mathrm{R}(v) \subseteq  \Omega(v) = \overline{\mathop{\mathrm{Cl}}(v)}$.
From the compactness of $S$, Lemma~\ref{lem:top23} implies that $\overline{\mathop{\mathrm{Per}}(v)} \cap \mathrm{R}(v) = \emptyset$.
The closedness of the singular point set $\mathop{\mathrm{Sing}}(v)$ implies that $\overline{\mathop{\mathrm{Cl}}(v)} \cap \mathrm{R}(v) = \emptyset$ and so that $\mathrm{R}(v) = \emptyset$.
Then $S = \mathop{\mathrm{Cl}}(v) \sqcup \mathrm{P}(v)$. 
\end{proof}

The converse of Lemma~\ref{thm41} is not true in general (see Lemma~\ref{lem:counter_exam00} for details). 
We show that the denseness of closed orbits implies the non-existence of circuits with wandering holonomy and strict limit circuits. 

\begin{lemma}\label{thm41a}
Let $v$ be a flow on a compact surface $S$.
If $\overline{\mathop{\mathrm{Cl}}(v)} = \Omega(v)$, then there are neither strict limit circuits nor  circuits with wandering holonomy.
\end{lemma}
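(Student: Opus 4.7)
The plan is to dispose of the two claims separately. The non-existence of circuits with wandering holonomy is immediate from Lemma~\ref{lem:wandering_hol}: every such circuit contains points of $\Omega(v)-\overline{\Cv}$, and this set is empty under the hypothesis $\overline{\Cv}=\Omega(v)$. For strict limit circuits I would argue by contradiction. Assume $\gamma$ is a strict limit circuit. Since $\gamma$ is a non-trivial limit circuit, each orbit in a collar basin has $\gamma$ as its $\omega$- or $\alpha$-limit set, from which one reads off that every point of $\gamma$ is non-wandering and hence lies in $\overline{\Cv}$. In both clauses of the definition of strict limit circuit the strategy is to exhibit a non-singular point $p\in\gamma$ that cannot actually be a limit of closed orbits, producing the desired contradiction. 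Because $p$ is non-singular, the flow box theorem supplies a neighborhood of $p$ free of singular points, so any closed orbits accumulating at $p$ are eventually periodic and cross every sufficiently small transverse arc $T$ through $p$ at a fixed point of some iterate of the first return map $f_v$.

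In the one-sided case I would choose $p\in\mathrm{P}(v)\cap\gamma$ together with a collar $V$ such that $V\sqcup\gamma$ is a neighborhood of $p$ and $V$ is an attracting or repelling collar basin of $\gamma$ (this uses the observation recorded after the definition that any one-sided strict limit circuit is attracting or repelling). Each orbit in $V$ is asymptotic to $\gamma$, so every point of $V$ is non-closed; consequently any sequence of closed orbits converging to $p$ lies eventually in $\gamma$. Since $\gamma$ consists of separatrices and finitely many singular points, and separatrices are non-closed, the sequence consists eventually of singularities, forcing $p\in\overline{\Sv}=\Sv$ and contradicting $p\in\mathrm{P}(v)$.

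In the remaining two-sided strict case (where $\gamma$ is not one-sided) I would set $p:=\mu\cap T$ for the separatrix $\mu\subseteq\gamma$ and transverse arc $T$ supplied by the definition. On $T_1$ the attracting (respectively, repelling) hypothesis on $f_v|_{T_1}$ forces the forward (respectively, backward) $f_v$-orbit of every point close to $\mu$ to converge monotonically to $\mu$, so no iterate of $f_v$ admits fixed points in $T_1$ near $\mu$. On $T_2$ the vanishing of the fixed-point set of $f_v|_{T_2}$, together with the geometric structure underlying the definition (cf.\ Figure~\ref{fig:strict_limit_circuits}), should similarly exclude fixed points of every iterate of $f_v$ in $T_2$ near $\mu$, since successive returns push $T_2$-points away from $\mu$ or into the opposite component $T_1$. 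Combining these two observations, no closed orbits accumulate at $p$, contradicting $p\in\overline{\Cv}$.

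The principal obstacle is precisely the $T_2$-side of the two-sided strict case: the literal hypothesis that $f_v|_{T_2}$ has no fixed points must be upgraded to the absence of periodic points of every iterate of $f_v$ on $T_2$ close to $\mu$, since a periodic orbit of the flow need not correspond to a \emph{first}-return fixed point. The route I would follow is a careful domain/image analysis of $f_v|_{T_2}$: show that the image of a sufficiently small one-sided neighborhood of $\mu$ in $T_2$ lies in $T\setminus T_2$ or separates from its domain near $\mu$, so that no orbit from $T_2$ can re-cross $T_2$ arbitrarily close to $\mu$ and hence no periodic orbit of the flow can pin down a fixed point of an iterate of $f_v$ on $T_2$ near $p$.
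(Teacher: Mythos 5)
Your treatment of circuits with wandering holonomy is exactly the paper's: Lemma~\ref{lem:wandering_hol} gives points of $\Omega(v)-\overline{\Cv}$ on any such circuit, and this set is empty under the hypothesis. For strict limit circuits your route differs in form from the paper's: the paper first applies Lemma~\ref{thm41} to get $\mathrm{R}(v)=\emptyset$, hence $S=\Cv\sqcup\mathrm{P}(v)$ and $\mathrm{int}\,\mathrm{P}(v)=S-\overline{\Cv}$, and then asserts that, by the definition of a strict limit circuit, such a circuit contains a point of $\Omega(v)\cap\mathrm{int}\,\mathrm{P}(v)=\Omega(v)-\overline{\Cv}$, which contradicts the hypothesis. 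You instead try to prove directly that the distinguished non-singular point $p\in\gamma$ is not accumulated by closed orbits; your one-sided case is acceptable at essentially the same level of rigor as the paper's appeal to the definition (and the observation that one-sided strict limit circuits are attracting or repelling), and you do not need Lemma~\ref{thm41} for it.

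The genuine gap is the $T_2$ side of the two-sided case, which you only announce a plan for rather than prove. The statement you need -- no periodic orbit of the flow meets $T_2$ arbitrarily close to $\mu$ -- does not follow formally from ``$f_v|_{T_2}$ has no fixed points'': a periodic orbit may cross $T$ several times, so a crossing in $T_2$ only yields a periodic point of some iterate of $f_v$; on a surface of positive genus consecutive crossings of a transverse arc need not be monotone, a crossing in $T_2$ may first return into $T_1$ far from $p$ or outside $T$ altogether, and an orientation-reversing, fixed-point-free return on $T_2$ can a priori carry period-two points accumulating at $\mu$. Your proposed ``careful domain/image analysis'' is precisely the missing content, and it coincides with the assertion the paper itself compresses into the single claim that a strict limit circuit contains a point of $\Omega(v)\cap\mathrm{int}\,\mathrm{P}(v)$. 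As written, then, the strict-limit-circuit half of your argument is incomplete: either carry out that analysis of the returns from $T_2$ near $\mu$, or reduce as the paper does via Lemma~\ref{thm41} and justify (or cite) the definition-level claim about membership in $\mathrm{int}\,\mathrm{P}(v)$.
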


\begin{proof}
From $\overline{\mathop{\mathrm{Cl}}(v)} = \Omega(v)$, Lemma~\ref{lem:wandering_hol} imlies that there are no circuits with wandering holonomy.
By Lemma~\ref{thm41}, there are no non-closed recurrent orbits and so $S = \mathop{\mathrm{Cl}}(v) \sqcup \mathrm{P}(v)$. 
Then $\Omega(v) \cap \mathrm{int} \mathrm{P}(v) = \Omega(v) - \overline{\mathop{\mathrm{Cl}}(v)}$.
We claim that there are no strict limit circuits. 
Indeed, assume that there is a strict limit circuit $\gamma$.
By definition, from $S = \mathop{\mathrm{Cl}}(v) \sqcup \mathrm{P}(v)$, the circuit $\gamma$ contains a point in $\Omega(v) \cap \mathrm{int} \mathrm{P}(v) = \Omega(v) - \overline{\mathop{\mathrm{Cl}}(v)}$ and so $\overline{\mathop{\mathrm{Cl}}(v)} \neq \Omega(v)$, which contradicts $\overline{\mathop{\mathrm{Cl}}(v)} = \Omega(v)$. 
\end{proof}

The converse of Lemma~\ref{thm41a} is not true in general (see Lemma~\ref{lem:counter_exam01}
 for details). 
We have the following statements. 

\begin{corollary}\label{prop:necessary_dense02}
Let $v$ be a flow on a compact surface. 
If $\mathop{\mathrm{Cl}}(v) = \Omega(v)$, then $\mathop{\mathrm{Cl}}(v)$ is closed, and there are neither non-closed recurrent points, non-periodic limit circuits, nor circuits with wandering holonomy.
\end{corollary}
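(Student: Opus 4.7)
The plan is to bootstrap the already-established Proposition~\ref{prop:necessary_dense}. Since $\Omega(v)$ is always closed, the hypothesis $\Cv = \Omega(v)$ immediately gives that $\Cv$ is closed and hence $\overline{\Cv} = \Omega(v)$. Applying Proposition~\ref{prop:necessary_dense} (equivalently, Lemmas~\ref{thm41} and \ref{thm41a}) under this weaker hypothesis supplies three of the four required conclusions: $\Cv$ is closed, there are no non-closed recurrent points, and there are no circuits with wandering holonomy.

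The only remaining task is to upgrade ``no strict limit circuits'' to ``no non-periodic limit circuits.'' Suppose for contradiction that $\gamma$ is a non-trivial, non-periodic limit circuit with, say, a semi-attracting collar basin $\A$ (the semi-repelling case is symmetric). By the definition of a non-trivial non-periodic circuit, $\gamma$ is the union of finitely many singular points and separatrices, so pick any separatrix $\sigma \subseteq \gamma$. Since the previous step has already ruled out non-closed recurrent orbits, the non-closed orbit $\sigma$ must be non-recurrent, and by the preliminaries this places $\sigma$ inside $\mathrm{P}(v)$.

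On the other hand, the semi-attracting condition $\omega(x) = \gamma$ for every $x \in \A$ forces every point of $\gamma$ to lie in an $\omega$-limit set, hence to be non-wandering, so $\sigma \subseteq \Omega(v)$. Combined with the decomposition $S = \Cv \sqcup \mathrm{P}(v) \sqcup \mathrm{R}(v)$ and the equality $\Cv = \Omega(v)$, which together give $\mathrm{P}(v) \cap \Omega(v) = \emptyset$, this yields $\sigma = \emptyset$, a contradiction. The work is largely mechanical; the only conceptual step is recognising that the strengthened equality $\Cv = \Omega(v)$ (in place of $\overline{\Cv} = \Omega(v)$) is exactly what lets one drop the ``strictness'' qualifier, because on any non-periodic limit circuit the separatrices are automatically both non-wandering and non-closed proper. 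Consequently, no extra hypothesis on the one-sidedness or on the behaviour of a first return map is required to produce a contradiction.
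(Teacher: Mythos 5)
Your argument is correct and follows essentially the same route as the paper: deduce closedness of $\mathop{\mathrm{Cl}}(v)$ from closedness of $\Omega(v)$, invoke Proposition~\ref{prop:necessary_dense}, and then rule out non-periodic limit circuits by noting that a limit circuit lies in an $\omega$- or $\alpha$-limit set, hence in $\Omega(v) = \mathop{\mathrm{Cl}}(v)$, while it must contain a non-closed separatrix in $\mathrm{P}(v)$. Your write-up merely spells out in more detail the paper's final sentence (``any points in $\omega$-limit and $\alpha$-limit sets are non-wandering''), including the correct observation that the equality $\mathop{\mathrm{Cl}}(v) = \Omega(v)$, rather than $\overline{\mathop{\mathrm{Cl}}(v)} = \Omega(v)$, is what eliminates all non-periodic limit circuits rather than only the strict ones.
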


\begin{proof}
Suppose that $\mathop{\mathrm{Cl}}(v) = \Omega(v)$. 
Since $\Omega(v)$ is closed, the closed point set $\mathop{\mathrm{Cl}}(v)$ is closed with $\overline{\mathop{\mathrm{Cl}}(v)} = \mathop{\mathrm{Cl}}(v) = \Omega(v)$. 
Proposition~\ref{prop:necessary_dense} implies that there are neither non-closed recurrent points, strict limit circuits, nor circuits with wandering holonomy.
Since any points in $\omega$-limit and $\alpha$-limit sets of any points are non-wandering, there are no non-periodic limit circuits. 
\end{proof}

\subsection{Sufficient conditions under the finite existence of singular points}

In this subsection, we show that any non-singular points in the boundary of the periodic point set are contained in a circuit under the finite existence of singular points as follows. 

\begin{lemma}\label{lem:circuit}
Let $v$ be a flow with finitely many singular points on a compact surface $S$. 
Each non-singular point in $\overline{\Pv}$ is contained in a circuit. 
\end{lemma}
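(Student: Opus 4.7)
The plan is to reduce to the non-periodic case and then to identify a circuit through $x$ using the structure of $\overline{\Pv}$. If $x\in\Pv$, then $O(x)$ is a cycle, i.e., a non-trivial circuit, and we are done. So assume $x\in\overline{\Pv}\setminus\Pv$ is non-singular and non-periodic. Because $\Pv$ is $v$-invariant and each $v_t$ is a homeomorphism, $\overline{\Pv}$ is $v$-invariant as well, so $O(x)$, $\omega(x)$, and $\alpha(x)$ all lie in $\overline{\Pv}$.

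The central step is to show $\omega(x)\cup\alpha(x)\subseteq\Sv$. Suppose for a contradiction that some $y\in\omega(x)$ is non-singular. Take a transverse arc $T\ni y$ inside a flow box disjoint from $\Sv$, and let $f$ be the first-return map defined on a neighborhood $U$ of $y$ in $T$. Since $y\in\overline{\Pv}$, there are periodic $p_n\in U$ with $p_n\to y$ and $f^{k_n}(p_n)=p_n$ for some minimal $k_n\ge 1$. If $k_n$ is bounded, then after extracting a subsequence continuity gives $f^k(y)=y$, so $O(y)$ is a periodic orbit; if $k_n\to\infty$, then the orbits $\gamma_n=O(p_n)$ make arbitrarily long excursions, and a compactness argument using the finiteness of $\Sv$ forces a limit-cycle structure to appear in the limit of the $\gamma_n$ passing through $y$. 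In either subcase $\omega(x)$ contains a periodic orbit $O(y)$, so $O(x)$ lies in the stable manifold of $O(y)$; that stable manifold is a neighborhood of $x$ consisting of orbits spiraling toward $O(y)$, hence non-periodic, contradicting $x\in\overline{\Pv}$. Therefore $\omega(x)\subseteq\Sv$, and symmetrically $\alpha(x)\subseteq\Sv$. Since these are compact connected subsets of the finite set $\Sv$, they are single points $\omega(x)=\{s\}$ and $\alpha(x)=\{s'\}$.

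Finally, I construct a circuit through $x$. If $s=s'$, then $O(x)\cup\{s\}$ is a homoclinic loop, which is the image of an oriented circle by a continuous orientation-preserving map, hence a non-trivial circuit. If $s\ne s'$, then $O(x)$ is a heteroclinic separatrix from $s'$ to $s$; pick a Hausdorff-convergent subsequence of periodic orbits $\gamma_n\subseteq\Pv$ accumulating on a point of $O(x)$. Each $\gamma_n$ follows $O(x)$ from a neighborhood of $s'$ to a neighborhood of $s$ and then returns; by the finiteness of $\Sv$ and compactness, the return arcs converge to a finite chain of separatrices passing through singular points from $s$ back to $s'$. The union of $O(x)$, this chain, and $\{s,s'\}$ is a non-trivial circuit through $x$.

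The main obstacle is the $k_n\to\infty$ case in the central step together with the Hausdorff-limit argument in the last paragraph: both require the finite singular-point hypothesis to control what arbitrarily long orbit excursions can do in the limit, since without this hypothesis the relevant limit sets could carry non-trivial recurrent structure rather than collapsing onto singular points joined by finitely many separatrices.
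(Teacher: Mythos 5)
Your reduction to the non-periodic case and the invariance of $\overline{\Pv}$ are fine, but both pillars of the remaining argument have genuine gaps. First, the ``central step'' ($\omega(x)\cup\alpha(x)\subseteq\Sv$) is not established by your dichotomy on $k_n$. In the bounded branch, $f^{k}(p_n)=p_n$ with $p_n\to y$ does not give $f^{k}(y)=y$: the first-return \emph{times} of the $p_n$ may tend to infinity, so $y$ need not lie in the domain of $f^{k}$ at all. Concretely, periodic orbits accumulating from one side on a saddle loop cross a small transversal at a point $y$ of the loop exactly once each (so $k_n=1$), yet $y$ is non-periodic and its forward orbit never returns. In the branch $k_n\to\infty$, ``a compactness argument forces a limit-cycle structure'' is not an argument, and the conclusion you draw from both branches --- that $\omega(x)$ contains a \emph{periodic} orbit $O(y)$ --- is exactly what fails in the delicate case, namely when periodic orbits accumulate on a non-periodic circuit. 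Even granting a periodic $O(y)\subseteq\omega(x)$, the assertion that its ``stable manifold is a neighborhood of $x$'' is unjustified for topological flows (a one-sided limit cycle's basin need not be open); one needs the monotone-spiral trapping argument on the side of approach to exclude periodic points near $x$.

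Second, and more seriously, the heteroclinic case is where the whole content of the lemma lives, and there you simply assert it: nothing shows that the Hausdorff limit of the ``return arcs'' of the $\gamma_n$ is a chain of separatrices joining $s$ to $s'$, let alone a finite one, nor that the resulting union is the image of an oriented circle (a circuit). A priori that limit is just a closed connected invariant subset of $\overline{\Pv}$; it can contain accumulations of periodic orbits (even two-dimensional pieces) and infinitely many separatrices --- indeed the paper's Figure~\ref{NAC02} shows that the circuit bounding a periodic region may have infinitely many edges, so ``finite chain'' is not even the right target. The paper avoids this extraction problem altogether: after smoothing via Gutierrez's theorem, it takes the invariant annulus $\A$ of periodic orbits adjacent to $x_0$, modifies the vector field by a bump function supported in $\A$ so that the boundary component $\partial$ through $x_0$ becomes the $\omega$-limit set of nearby orbits of the modified flow, and then invokes the generalized Poincar\'e--Bendixson theorem for flows with finitely many singular points to conclude that $\partial$ is a (limit) circuit; since the modification is supported in $\A$ and $\partial\subset S-\A$, this is a circuit for the original flow. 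Without an argument of comparable strength (or a correct direct proof of your central step plus a genuine construction of the circuit), your proposal does not prove the lemma.
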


To show the previous lemma, we show the following existence of a closed transversal parallel to a limit circuit. 

\begin{lemma}\label{lem:lc_transersal}
For any limit circuit, there is a closed transversal that is not contractible in its associated collar. 
\end{lemma}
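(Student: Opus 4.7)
The plan is to use the first-return map on a transverse arc at a non-singular point of $\gamma$ to build a loop in the collar that winds once around $\gamma$, and then derive non-contractibility from a Poincar\'e--Hopf index argument. By reversing time if necessary, I may assume $\gamma$ is semi-attracting with attracting collar basin $\mathbb{A}$; then $\omega(x) = \gamma$ and $O^{+}(x) \subseteq \mathbb{A}$ for every $x \in \mathbb{A}$, so in particular $\mathbb{A} \cap \Sv = \emptyset$.

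Since the non-trivial circuit $\gamma$ is a union of finitely many separatrices and singular points, I would pick a non-singular point $p$ on a separatrix of $\gamma$ and a short open transverse arc $T$ through $p$. Let $T^{+} := T \cap \mathbb{A}$, parametrized by $r \in (0, \varepsilon)$ with $r \to 0$ corresponding to $p$. The flow box theorem at $p$, combined with continuous dependence on initial conditions, yields a continuous first-return time $\tau(r)$ and a continuous first-return map $f(r) := v_{\tau(r)}(r) \in T^{+}$ for $r$ near $0$, with $f(r) < r$ by attractivity. I would then fix a sufficiently small $r_{1}$, set $r_{0} := f(r_{1}) < r_{1}$, choose any continuous strictly increasing $r \colon [0,1] \to [r_{0}, r_{1}]$ with $r(0) = r_{0}$ and $r(1) = r_{1}$, and define
\[
C(\lambda) := v_{\lambda\, \tau(r(\lambda))}\bigl(r(\lambda)\bigr), \qquad \lambda \in [0, 1].
\]
Since $C(0) = r_{0} = f(r_{1}) = C(1)$, this $C$ is a closed curve in $\mathbb{A}$. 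Using the first-return property---injectivity of the auxiliary map $(r, \lambda) \mapsto v_{\lambda\,\tau(r)}(r)$ on $[r_{0}, r_{1}] \times [0, 1)$---together with the flow box structure along the compact orbit arc from $r_{1}$ to $f(r_{1})$, I would check that $C$ is simple and that its tangent at each $\lambda$ has a nonzero component along the transverse direction $T^{+}$ (proportional to $r'(\lambda) \neq 0$), so $C$ is never tangent to $v$. Hence $C$ is a (piecewise smooth) closed transversal, where the single potential corner at the base point $r_{0}$ has both one-sided tangents transverse to $v$ and can be smoothed.

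Finally, I would show that $C$ is not contractible in $\mathbb{A}$ by contradiction. If $C$ bounded a topological disk $D \subseteq \mathbb{A}$, then the transversality of $C$ would imply, by continuity, that $v$ crosses $C$ in a single consistent direction (into or out of $D$), and the Poincar\'e--Hopf index formula applied to $D$ would force a singular point inside $D$, contradicting $\mathbb{A} \cap \Sv = \emptyset$. Therefore $C$ represents a generator of $\pi_{1}(\mathbb{A}) \cong \mathbb{Z}$. The principal technical obstacle is verifying that the auxiliary map $(r, \lambda) \mapsto v_{\lambda\,\tau(r)}(r)$ is a homeomorphism onto a flow rectangle---equivalently, that $\tau$ is continuous near $p$ and that orbits between consecutive visits to $T$ do not intersect---but this follows from the flow box theorem applied along the compact orbit arc from $r_{1}$ to $f(r_{1})$, which lies in $\mathbb{A}$ and therefore a positive distance from $\Sv$.
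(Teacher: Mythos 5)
Your proof breaks down exactly at the point where the paper's proof does most of its work: you assume, without justification, that the collar basin meets the transversal on a single side of $p$ and that the first return of points of $T^{+}$ comes back to $T^{+}$ monotonically. You set $T^{+}:=T\cap\A$ and parametrize it by $r\in(0,\varepsilon)$, but $T\cap\A$ need not be one half-open arc: the lemma covers one-sided limit circuits (e.g.\ a limit cycle that is the core of a M\"obius band, and more generally any circuit whose collar is the single component of $U-\gamma$ and hence approaches the separatrix through $p$ from both local sides). In such cases $T\cap\A$ consists of both components of $T\setminus\{p\}$, successive crossings of $T$ can switch sides of $p$, and the assertion ``a continuous first-return map $f(r)\in T^{+}$ for $r$ near $0$ with $f(r)<r$ by attractivity'' is not available as stated: attractivity only gives $\omega(r)=\gamma$, which guarantees that crossings of $T$ accumulate at $p$, not that the first one after $r$ lies on the same component, nor that it lies below $r$. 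Without that, your interval $[r_{0},r_{1}]$, the return-time function on it, and hence the closed curve $C(\lambda)$, are not defined, and the curve need not close up into a simple closed transversal. This side-switching (equivalently, orientation-reversing holonomy) is precisely the case the paper's proof spends its second half on: when the return germs are orientation-reversing it builds loops with pairwise disjoint M\"obius-band neighborhoods, uses compactness (finite non-orientable genus) to find a pair of returns with $x_{2}<x_{1}$, and only then gets an orientation-preserving return on a subinterval to feed into the waterfall construction. To repair your argument you would need an analogous step, e.g.\ pass to consecutive crossings of one fixed component of $T\cap\A$ (such crossings exist by pigeonhole since the crossings accumulate only at $p$) and verify that the corresponding return band, lying in the orientable annulus $\A$, gives an orientation-preserving return before forming your family; as written this is a genuine gap, not a routine omission.

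The remaining ingredients are essentially the paper's: your explicit family $C(\lambda)$ is the waterfall construction in coordinates (the corner-smoothing and the tangency computation require a $C^{1}$ flow, which you can arrange by Gutierrez's smoothing theorem as the paper does elsewhere), and your Poincar\'e--Hopf argument for non-contractibility in $\A$ is correct and is in fact a more explicit justification than the paper's bare assertion that the resulting transversal is parallel to $\partial\A$; also your observation that $\A\cap\Sv=\emptyset$ is right, since $\omega(x)=\gamma$ for every $x\in\A$ while $\gamma\cap\A=\emptyset$.
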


\begin{proof}
Let $\gamma_0$ be a limit circuit for a flow $v$ on a surface. 
By time reversion if necessary, we may assume that $\gamma_0$ is semi-attracting. 
Let $x_0 \notin \gamma_0$ be a point with $\gamma_0 = \omega(x_0)$, $\A$ its small associated collar, $y \in \gamma_0$ a non-recurrent point, $I \subset \A$ an oriented open transverse arc such that $x_0$ and $y$ are boundary components of $I$, and $f_v: I \to I$ the first return map on $I$ induced by $v$, $x_i := (f_v)^i(x_0)$ the $i$-th return of $x_0$, $C_{a,b} \subset O^+(x)$ the orbit arc from $a$ to $b$, and $I_{a,b} \subset I$ the subinterval from $a$ to $b$ of $I$.  
Then $I_{a,b}$ and $I_{b,a}$ are the same interval but have the opposite directions.
We may assume that $x_0 < x_1$. 

We claim that if the restriction of $f_v$ to a neighborhood of $x_i$ for some $i \in \Z_{\geq 0}$ is orientation-preserving then there is a desired closed transversal. 
Indeed, the restriction of $f_v$ to a neighborhood of $x_i$ for some $i \in \Z_{\geq 0}$ is orientation-preserving. 
Then put $C := C_{x_i, x_{i+1}}$ and $J := I_{x_i, x_{i+1}}$. 
By the waterfall construction (cf. \cite[Lemma~3.3.7 p.86]{candel2000foliation}) to the loop $\mu := C \cup J$ (see Figure~\ref{wf}), there is a closed transversal $\gamma$ intersecting $O(x)$ near $\mu$ which is parallel to $\partial \A$. 
\begin{figure}
\begin{center}
\includegraphics[scale=0.4]{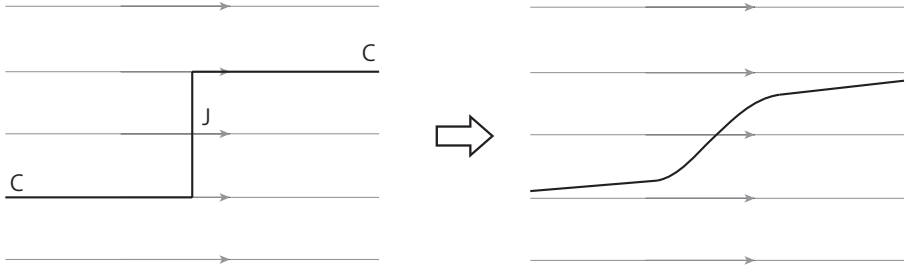}
\end{center}
\caption{The waterfall construction}
\label{wf}
\end{figure}

Thus we may assume that the restriction of $f_v$ to a neighborhood of $x_i$ for any $i \in \Z_{\geq 0}$ is orientation-reversing.  
We claim that there is a natural number $i$ such that $x_{i+1} < x_i$. 
Indeed, otherwise $x_i < x_{i+1}$ for any $i \in \Z_{\geq 0}$. 
Then each pair of loops $\gamma_i := C_{x_{2i}, x_{2i+1}} \cup I_{x_{2i+1}, x_{2i}}$ has disjoint neighborhoods each of which is a M\"obius band. 
This means that $S$ has infinitely many non-orientable genus, which contradicts the compactness of $S$.

By renumbering, we may assume that $x_2 < x_1$.  
From $x_0 < x_1$, the interval $I_{x_2, x_0}$ does not intersect $x_1$. 
Then the first return map for $I_{x_2, x_0}$ along $C_{x_0, x_2}$ is orientation-preserving such that a pair of $C := C_{x_0, x_2}$ and $J := I_{x_2, x_0}$. 
As above, the waterfall construction to the loop $\mu := C \cup J$  there is a closed transversal $\gamma$ intersecting $O(x)$ near $\mu$ which is parallel to $\partial \A$. 
\end{proof}

%

We demonstrate Lemma~\ref{lem:circuit} as follows.

\begin{proof}[Proof of Lemma~\ref{lem:circuit}]
By Gutierrez's smoothing theorem~\cite{gutierrez1978structural}, we may assume that $v$ is $C^1$.  
Fix a non-singular point $x_0 \in \overline{\Pv}$. 
Then there are a small open transverse arc $I$ whose boundary contains $x_0$, and a sequence $(x_i)_{i \in \Z_{>0}}$ of fixed points of the first return map on $I$ which converges to $x_0$ such that one of the boundary components of the saturation $v(I)$ is the periodic orbit $O(x_1)$. 
Let $I_{a,b}$ be the open subarc in $I$ from a point $a \in I$ to a point $b \in I$. 
Denote by $\A_i$ the connected component of $S - \bigcup_{j \in \Z_{>0}} O(x_j)$ containing $I_{x_i,x_{i+1}}$.  
By the finite existence of genus and singular points, taking a subsequence of $(x_i)_{i \in \Z_{> 0}}$, we may assume that any $\A_i$ is an open invariant annulus intersecting no singular points. 
Then the union $\A := \bigcup_{i \in \Z_{>0} } \A_i \sqcup O(x_{i+1})$ is an open invariant annulus intersecting no singular points. 
Let $\partial$ be the boundary component of $\A$ containing $x_0$. 
Then $\partial \A = O(x_1) \sqcup \partial$. 
Lemma~\ref{lem:top23} implies that $\overline{\mathop{\mathrm{Per}}(v)} \cap \mathrm{R}(v) = \emptyset$ and so that $\mathrm{R}(v) \cap \partial = \emptyset$.
Since $v$ is $C^1$, there is a continuous vector field $X$ generating $v$. 
Using a bump function on a flow box $U$ with $I \subset U \subset \A$ and $x_0 \in \partial U$, we can modify the vector field $X$ into the resulting vector field generating a continuous flow $w$ such that the first return map on $I$ by $w$ is a semi-attracting to $x_0$. 
By construction, we obtain $v|_{S - \A} = w|_{S - \A}$. 
Moreover, we may assume that $\A \cap \mathop{\mathrm{Sing}}(w) = \emptyset$. 
From $\mathrm{R}(v) \cap \partial = \emptyset$ and $\partial \subset S - \A$, we have $\mathrm{R}(w) \cap \partial = \emptyset$. 
Therefore $\partial$ is the $\omega$-limit set of the flow $w$ with finitely many singular points. 
Since a non-singular point $x_0 \notin \mathop{\mathrm{Sing}}(w)$ is contained in $\partial$,  by $\mathrm{R}(w) \cap \partial = \emptyset$, the generalization of the Poincar\'e-Bendixson theorem for a flow with finitely many singular points  (cf. \cite[Theorem~2.6.1]{nikolaev1999flows}) implies that the $\omega$-limit set $\partial$ is a limit circuit. 
By construction, the non-singular point $x_0$ is contained in the circuit $\partial$ with respect to $v$. 
\end{proof}

\subsection{Equivalence for the finite case}

In the last of this section, we show the following equivalence for flows with finitely many singular points.

\begin{proposition}\label{thm:finite_dense}
The following conditions are equivalent for a flow $v$ with finitely many singular points on a compact surface:
\\
$(1)$ $\overline{\mathop{\mathrm{Cl}}(v)} = \Omega(v)$.
\\
$(2)$ There are neither non-closed recurrent points, strict limit circuits, nor circuits with wandering holonomy.
\end{proposition}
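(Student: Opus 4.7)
The implication $(1) \Rightarrow (2)$ is immediate from Proposition~\ref{prop:necessary_dense}, which gives the three non-existence conclusions from $\overline{\Cv} = \Omega(v)$. For $(2) \Rightarrow (1)$, the inclusion $\overline{\Cv} \subseteq \Omega(v)$ is automatic since $\Omega(v)$ is closed and contains every closed orbit, and because $\Sv$ is finite (hence closed) one has $\overline{\Cv} = \Sv \cup \overline{\Pv}$. Since hypothesis (2) yields $\mathrm{R}(v) = \emptyset$, any non-singular $x \in \Omega(v)$ lies in $\Pv \cup \mathrm{P}(v)$; the case $x \in \Pv$ is trivial, so the task reduces to ruling out the existence of $x \in \Omega(v) \cap \mathrm{P}(v)$ with $x \notin \overline{\Pv}$. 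Assuming such an $x$, I would choose a neighborhood $V \ni x$ disjoint from $\Cv$ (possible by finiteness of $\Sv$) and a small transverse open arc $I \subset V$ through $x$ with first return map $f$; then $f^k$ has no fixed points in $I$ for any $k \geq 1$, and non-wandering yields sequences $y_n \to x$ in $I$ and $k_n \to \infty$ with $f^{k_n}(y_n)$ defined and converging to $x$.

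The plan is then to split into three cases according to the behavior of $f$ at $x$. If $f(x)$ is defined and $f$ is orientation-preserving in a neighborhood of $x$ in $I$, then $f - \mathrm{id}$ has constant sign there, so the iterates $f^{k_n}(y_n)$ drift monotonically away from $x$ and remain bounded away from $x$ by roughly $|f(x) - x|$, contradicting $f^{k_n}(y_n) \to x$. If $f(x)$ is defined and $f$ is orientation-reversing, then $f^2$ is orientation-preserving with no fixed points, and the preceding monotonicity argument disposes of even $k_n$; for odd $k_n$ the plan is to show that arbitrarily small transverse arcs through $x$ support first return maps with orientation-reversing behavior and disjoint domain and image (using $V \cap \Pv = \emptyset$ to preclude any point from lying in both), thereby exhibiting $x$ as a non-singular point of a circuit with wandering holonomy, contradicting (2). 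If $f(x)$ is undefined, then the forward orbit of $x$ fails to return to $I$, and the generalized Poincar\'e--Bendixson theorem for flows with finitely many singular points (available because $\mathrm{R}(v) = \emptyset$) forces $\omega(x)$ to be either a closed orbit or a non-trivial circuit. A closed-orbit limit would make every nearby orbit in $V$ be attracted to $\omega(x)$ without returning to $V$, contradicting $x \in \Omega(v)$; so $\omega(x)$ is a non-trivial limit circuit $\gamma$. Using Lemma~\ref{lem:lc_transersal} to produce a closed transversal parallel to $\gamma$ on its attracting collar and using $V \cap \Pv = \emptyset$ to propagate the absence of periodic orbits to the non-attracting collar via a comparison of first return maps, $\gamma$ becomes a strict limit circuit, again contradicting (2).

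The main obstacle will be making the orientation-reversing subcase and the Poincar\'e--Bendixson subcase genuinely rigorous. In the former, one must leverage the simultaneous absence of fixed points of every iterate of $f$ in $V$ together with the behavior on shrinking transverse arcs to verify the precise definition of a circuit with wandering holonomy (orientation-reversing return, nonempty domain, empty intersection of domain and image). In the latter, translating the local hypothesis $V \cap \Pv = \emptyset$ into the global no-fixed-points condition on the non-attracting side of $\gamma$ needs a careful use of the attracting collar structure of $\gamma$ combined with Lemma~\ref{lem:lc_transersal}. Once each case produces either a strict limit circuit or a circuit with wandering holonomy, we contradict (2) and the proof is complete.
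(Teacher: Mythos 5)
Your direction $(1)\Rightarrow(2)$ is exactly the paper's (Proposition~\ref{prop:necessary_dense}), and the overall shape of your $(2)\Rightarrow(1)$ — take a non-singular $x\in\Omega(v)$ outside $\overline{\Cv}$ and force either a strict limit circuit or a circuit with wandering holonomy — is indeed the paper's strategy (Lemmas~\ref{lem:wandering_h}, \ref{lem:wandering_pt} and \ref{lemp:sufficient_dense}). However, the case analysis as you plan it has genuine gaps. In the orientation-preserving case, the ``monotone drift'' bound of roughly $|f(x)-x|$ is purely local: the orbit of $y_n$ can leave the subinterval on which $f$ is monotone, travel around the surface, and re-enter $I$ on the \emph{other} side of $x$; the first return map of a transverse arc is defined only on a union of subintervals and is not globally monotone, so nothing in your sketch excludes such returns. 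The paper controls exactly this global recurrence with closed transversals obtained from the waterfall construction together with finiteness of genus and of $\Sv$ (Lemma~\ref{lem:lc_transersal} and the long construction in Lemma~\ref{lem:wandering_h}); no ingredient of that kind appears in your plan.

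Two further gaps: in the orientation-reversing case, an orientation-reversing first return with nonempty domain and disjoint image on arbitrarily small arcs through $x$ does not by itself verify the definition of a circuit with wandering holonomy — you must produce an actual non-trivial circuit (a union of separatrices and finitely many singular points) containing $x$ that carries this holonomy. That is precisely the content of Lemma~\ref{lem:bdry_circuit} (surgery replacing a flow box by a punctured M\"obius band plus the generalized Poincar\'e--Bendixson theorem) and of the invariant flow box $U_\infty$ built in Lemma~\ref{lem:wandering_h}, neither of which your proposal addresses. Finally, the ``$f(x)$ undefined'' case is misdiagnosed: when $\omega(x)$ is a singular point — the typical situation, $x$ lying on a separatrix — nearby orbits are not attracted and do return, so your dismissal of closed-orbit limits fails exactly in the configuration that produces strict limit circuits and wandering-holonomy circuits; and when $\omega(x)$ is a limit circuit $\gamma$ with $x\notin\gamma$, the absence of closed orbits in a neighborhood $V$ of $x$ gives no control on the far side of $\gamma$, so $\gamma$ need not be strict (it may bound a band of periodic orbits on its other side), and in that configuration $x$ is actually wandering — the contradiction must be extracted from the structure at $x$, not at $\gamma$. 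As proposed, the argument cannot be completed without essentially redoing Lemmas~\ref{lem:bdry_circuit}, \ref{lem:wandering_h} and \ref{lem:wandering_pt}.
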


To show the previous statement, we have the following existence of circuits.

\begin{lemma}\label{lem:bdry_circuit}
Let $v$ be a flow with finitely many singular points on a compact surface $S$, $x$ non-recurrent point,  $I$ an open transverse arc containing $x$, and $f_I$ the first return map on $I$. 
Suppose that $f_I$ is orientation-reversing and $f_I(I_-) = I_+$, where $I_-, I_+$ are the connected component of $I - \{ x \}$. 
Denote by $D$ the union of orbit arcs from points in $I_-$ to the image of $f_I$. 
The connected component of $\partial D - (I_- \sqcup I_+)$ containing $x$ is a circuit. 
\end{lemma}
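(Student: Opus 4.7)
My plan is to realize the boundary component $\gamma$ as a one-dimensional flow-invariant set containing $\overline{O(x)}$ and then invoke the generalized Poincar\'e-Bendixson theorem to conclude that $\gamma$ is a circuit.

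First, I would verify that $x \notin \mathrm{Dom}(f_I)$: if it were, the continuity of $f_I$, the relation $f_I(I_-) = I_+$, and the orientation-reversing property would force $f_I(x) = x$ by squeezing approximations of $x$ from $I_-$ (whose images in $I_+$ must converge to $x$), so $x$ would be periodic, contradicting non-recurrence. Consequently $O(x) \cap I = \emptyset$, and for any sequence $y_n \to x$ in $I_-$, the return times $\tau(y_n) \to \infty$ while $z_n := f_I(y_n) \to x$ in $I_+$.

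Next, I would carry out a local analysis in a small flow box $B$ at $x$. The transverse arc $I \cap B$ and the orbit $O(x) \cap B$ cut $B$ into four quadrants. For $y \in I_- \cap B$ close to $x$, the arc $\sigma_y := \{v_t(y) : 0 \le t \le \tau(y)\}$ starts in the quadrant just above $y \in I_-$ and ends, at $z = f_I(y)$, in the opposite quadrant just below $z \in I_+$; after possibly shrinking $B$, no orbit arc accumulating on $O(x)$ meets the remaining two quadrants. This gives $O(x) \cap B \subseteq \partial D$ and, after removing $I_- \sqcup I_+$, the connected component of $x$ in $B$ is exactly $O(x) \cap B$. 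Hence $O(x) \subseteq \gamma$, and by closedness and flow-invariance of $\gamma$ (as a union of limits of orbit arcs), $\overline{O(x)} = O(x) \cup \omega(x) \cup \alpha(x) \subseteq \gamma$.

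Applying the generalized Poincar\'e-Bendixson theorem \cite[Theorem~2.6.1]{nikolaev1999flows}, each of $\omega(x)$ and $\alpha(x)$ is a singular point, a periodic orbit, or a non-trivial polycycle. To show $\gamma$ closes into an oriented loop, I would parametrize $D$ by $\Phi(y, s) := v_{s\tau(y)}(y)$ on $I_- \times [0, 1]$ and observe that $\gamma$ arises from the collapsing ``right edge'' $\Phi(\{y_n\} \times [0,1])$ as $y_n \to x$. Each orbit arc $\sigma_{y_n}$ decomposes as an initial segment shadowing $O^+(x)$ into $\omega(x)$, a middle portion following the polycycle structure of $\omega(x)$ together with a finite chain of separatrices linking $\omega(x)$ to $\alpha(x)$, and a final segment shadowing $O^-(x)$ backward from $z_n$. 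Passing to the limit, $\gamma$ becomes the image of an oriented circle under a continuous orientation-preserving map consisting of finitely many singular points and separatrices. Since $x \in \gamma$ is non-singular and non-periodic, $\gamma$ is neither a singular point nor a periodic orbit, and is therefore the required non-trivial circuit.

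The main obstacle is the last step: rigorously showing that the limit $\gamma$ is a \emph{single} oriented loop, i.e., a finite oriented concatenation of separatrices. This should follow from a flow-box analysis at each singular point in $\omega(x) \cup \alpha(x)$, whose finite-sector decomposition pins down which separatrix the arcs $\sigma_{y_n}$ enter and exit; combined with the finiteness of singular points on $S$, this forces the limit to close into a single circuit.
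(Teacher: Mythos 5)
Your preliminary steps are essentially sound: that $x\notin\mathrm{Dom}(f_I)$, that the return times blow up, that $O(x)\subseteq\partial D$ and hence $O(x)\cup\alpha(x)\cup\omega(x)$ lies in the component $\gamma$. But there is a genuine gap, and it is exactly where you flag it: the claim that the limit of the return arcs $\sigma_{y_n}$ closes up into a \emph{single} oriented loop made of separatrices and finitely many singular points is the entire content of the lemma, and ``this should follow from a flow-box analysis at each singular point'' is a hope, not an argument. Nothing you wrote rules out that the accumulation set of the $\sigma_{y_n}$ fails to connect $\omega(x)$ back to $\alpha(x)$ inside $\gamma$, or that $\gamma$ is not a continuous image of a circle; note also that circuits in this paper may contain infinitely many separatrices (see Figure~\ref{NAC02}), so your ``finite chain of separatrices'' target is not even the right normal form, and finiteness of singular points alone does not force it. A secondary gap: the generalized Poincar\'e--Bendixson theorem for flows with finitely many singular points has a fourth alternative, a quasiminimal set; since the lemma only assumes that the single point $x$ is non-recurrent, your trichotomy ``singular point, periodic orbit, or polycycle'' for $\omega(x)$ and $\alpha(x)$ needs a separate argument excluding quasiminimal limit sets before it can be invoked.

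The paper sidesteps the limit analysis entirely by a surgery that your proposal does not contain: one replaces a small trivial flow box meeting $O^-(x)$ by a once-punctured M\"obius band (Figure~\ref{twist01}), producing a flow $w$ on a compact surface, still with finitely many singular points, for which the first return map on $I_-$ is repelling. For $w$, the set $\gamma$ is then literally the $\alpha$-limit set of a point of $I_-$ near $x$ and a boundary component of an open annulus inside $w(I_-)$, so the structure theorem for limit sets identifies it as a semi-repelling limit circuit of $w$; since the modification takes place away from $\gamma$, the set $\gamma$ is a circuit for $v$ as well. If you wish to keep your direct shadowing approach, you would have to supply the closing-up argument yourself, which amounts to reproving the circuit structure of limit sets rather than quoting it, so the surgery reduction is the missing idea.
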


\begin{proof}
By definition, the union $D$ is an open flow fox as in the left on Figure~\ref{nondir02}.
\begin{figure}
\begin{center}
\includegraphics[scale=0.15]{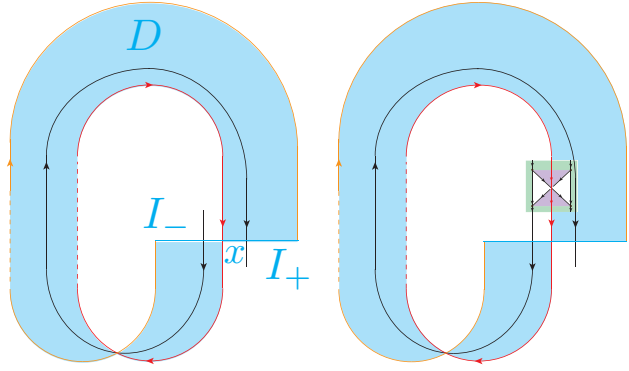}
\end{center}
\caption{Left, a trivial open flow fox and its neighborhood; right, the modified flow by replacing a trivial open flow box with a one-punctured M\"obius band as in Figure~\ref{twist01}.}
\label{nondir02}
\end{figure} 
Let $\gamma$ be the connected component of $\partial D - (I_- \sqcup I_+)$ containing $x$. 
Replace a small trivial open flow fox containing a point in $O^-(x)$ by a one-punctured M\"obius band as in Figure~\ref{twist01}, the resulting surface is a compact surface, and the resulting flow $w$ is a flow with finitely many singular points such that the first return map on $I_-$ by $w$ is repelling. 
\begin{figure}
\begin{center}
\includegraphics[scale=0.2]{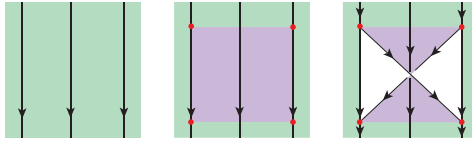}
\end{center}
\caption{Left, a trivial open flow fox $D$; middle, an open flow fox with exactly four singular points; right, a one-punctured M\"obius band obtained from the open flow fox with exactly four singular points by cutting an open disk, twisting the disk, pasting two disjoint closed intervals.}
\label{twist01}
\end{figure} 
Then $\gamma$ is the $\alpha$-limit set of a point in $I_-$ near $x$ with respect to $w$ and is a boundary component of an open annulus contained in $w(I_-)$. 
By the generalization of the Poincar\'e-Bendixson theorem for a flow with finitely many singular points, the $\alpha$-limit set $\gamma$ with respect to $w$ is a semi-repelling limit circuit with respect to $w$. 
 Since the replacement preserve $\gamma$, the subset $\gamma$ is also a circuit with respect to $v$.
\end{proof}

We have the following statement. 

\begin{lemma}\label{lem:wandering_h}
Every point in $\Omega(v) - \overline{\Cv}$ for a flow with finitely many singular points on a compact surface is contained in either a limit circuit or a circuit with wandering holonomy.
\end{lemma}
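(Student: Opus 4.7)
The plan is to build a transverse section through $x$ and classify the resulting first return map. Since $\Sv \cup \overline{\Pv} \subseteq \overline{\Cv}$ and $x \notin \overline{\Cv}$, the point $x$ is non-singular and admits an open transverse arc $I$ through $x$ that meets no periodic point of $v$. Non-wandering of $x$ ensures that the first return map $f_I$ on $I$ has $x$ in the closure of its domain, while the hypothesis $x \notin \overline{\Cv}$ prevents $x$ itself from being a fixed point of any iterate of $f_I$. I would split the argument according to whether $x$ is recurrent.

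If $x \in \mathrm{P}(v)$ is non-recurrent, I analyze the orientation and the domain/codomain structure of $f_I$ near $x$. When $f_I$ is orientation-reversing and its domain and image sit on opposite sides of $x$ with empty intersection, Lemma~\ref{lem:bdry_circuit} yields a non-trivial circuit $\gamma$ through $x$, and the very orientation-reversing disjoint-image condition certifies $\gamma$ as a circuit with wandering holonomy. Otherwise either $f_I$ is orientation-preserving near $x$, or its domain and image overlap; one then generates an annular recurrence structure accumulating at $x$ by iterating $f_I$, and combining the waterfall construction from the proof of Lemma~\ref{lem:lc_transersal} with the generalized Poincar\'e--Bendixson theorem for flows with finitely many singular points extracts a semi-attracting or semi-repelling limit circuit through $x$.

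For the recurrent case $x \in \mathrm{R}(v) = \mathrm{LD}(v) \sqcup \mathrm{E}(v)$, Lemma~\ref{lem:top23} places $x$ inside $\overline{\mathrm{P}(v)}$, either via $\mathrm{E}(v) \subseteq \mathrm{int}\,\overline{\mathrm{P}(v)}$ when $x \in \mathrm{E}(v)$, or via the complementary separation $\overline{\Cv \sqcup \mathrm{E}(v)} \cap \mathrm{LD}(v) = \emptyset$ combined with $x \notin \overline{\Cv}$ when $x \in \mathrm{LD}(v)$. Thus $x$ is accumulated by non-recurrent proper orbits. I then apply the non-recurrent analysis above to a sequence $x_n \to x$ with $x_n \in \mathrm{P}(v)$, producing for each $x_n$ a circuit $\gamma_n$ (either a limit circuit or a circuit with wandering holonomy) passing through $x_n$; after passing to a Hausdorff subsequential limit, the finiteness of $\Sv$ controls the singular vertices, and the plan is to show that $\gamma := \lim \gamma_n$ is itself a circuit containing $x$, whose type (limit circuit versus wandering holonomy) is inherited from the $\gamma_n$ via stability of the relevant return-map conditions.

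The main obstacle is precisely this recurrent sub-case. A locally dense recurrent orbit through $x$ sits generically in the interior of its own closure, so pinning $x$ onto an actual circuit — rather than merely being accumulated by a family of circuits — is delicate; it requires ruling out that the Hausdorff limit of the $\gamma_n$ either degenerates into $\overline{\Cv}$ (which is where the hypothesis $x \notin \overline{\Cv}$ is used sharply) or escapes $x$ through the transverse direction. I expect the technical crux to be a continuity argument showing that the first-return holonomy data witnessing each $\gamma_n$ passes to the limit, so that $\gamma$ inherits either the one-sided/strict or the wandering-holonomy structure required to place $x$ on a circuit of the stated types.
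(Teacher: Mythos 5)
Your recurrent sub-case is a genuine gap, and it cannot be repaired in the form you propose: a non-closed recurrent point is never contained in a circuit at all. A circuit is by definition a cycle or a finite graph made of singular points and separatrices, so it consists of closed orbits and non-recurrent orbits (as the paper notes for limit circuits); the irrational flow on the torus, where $\Omega(v)-\overline{\Cv}$ is the whole torus and there are no circuits whatsoever, shows that no Hausdorff-limit-of-circuits argument can "pin $x$ onto an actual circuit" when $x\in\mathrm{R}(v)$. (Your intermediate step is also off: Lemma~\ref{lem:top23} does not place a locally dense point in $\overline{\mathrm{P}(v)}$ --- again the irrational flow has $\mathrm{LD}(v)=S$ and $\mathrm{P}(v)=\emptyset$.) The lemma is only invoked in the paper in the setting where there are no non-closed recurrent points (Lemma~\ref{lem:wandering_pt}), and its proof tacitly uses this: the generalized Poincar\'e--Bendixson theorem is applied to conclude that the $\alpha$- and $\omega$-limit sets of all approximating orbits are singular points or limit circuits, which fails in the presence of quasi-minimal sets. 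So the correct move is to discard the recurrent case rather than to prove it.

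Your non-recurrent branch has the right endpoints (Lemma~\ref{lem:bdry_circuit} for the wandering-holonomy conclusion, Poincar\'e--Bendixson and the closed-transversal/waterfall machinery of Lemma~\ref{lem:lc_transersal} elsewhere) but is missing the global argument that actually drives the paper's proof. The paper fixes $x\in\Omega(v)-\overline{\Cv}$ lying on \emph{no} limit circuit, takes returning points $a_n\to x$ with returns $b_n\to x$, and then must show: (i) only finitely many sets can occur as $\alpha(a_n)$ or $\omega(a_n)$ --- this uses finiteness of genus (parallel limit cycles), finiteness of $\Sv$, Lemma~\ref{lem:lc_transersal}, and non-contractibility of the resulting closed transversals via Poincar\'e--Hopf; (ii) after passing to a subsequence one can build an invariant trivial flow box $U_\infty$ whose boundary contains $x$; and (iii) only then does one deduce that the first return map near $x$ is orientation-reversing with domain and codomain on opposite sides of $x$ and disjoint, so that Lemma~\ref{lem:bdry_circuit} produces a circuit with wandering holonomy through $x$. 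Your alternative branch ("orientation-preserving or overlapping returns $\Rightarrow$ extract a limit circuit through $x$") is not justified as stated: nearby returning orbits give limit circuits as their $\alpha$/$\omega$-limit sets, but nothing in a purely local analysis forces such a circuit to pass through $x$; that is exactly the dichotomy the paper avoids by assuming $x$ is on no limit circuit and proving the wandering-holonomy alternative directly.
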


\begin{proof}
Let $v$ be a flow with finitely many singular points on a compact surface $S$. 
Fix a point $x \in \Omega(v) - \overline{\Cv}$ which is not contained in a limit circuit. 
Let $d$ be a Riemannian distance on $S$ and $T$ a small open transverse arc intersecting $x$ whose closure contains no singular points and is a closed transverse arc. 
Denote by $T_-$ and $T_+$ the connected components of $T - \{ x \}$. 
From $x \in \Omega(v) - \overline{\Cv}$, there is a sequence $(a_n)_{n \in \Z_{> 0}}$ of non-periodic points in $T$ with $O^+(a_n) \cap T \neq \emptyset$ and $O(a_n) \neq O(a_m)$ for any $n \neq m \in  \Z_{\geq 0}$ such that the sequencer $(a_n)_{n \in \Z_{\geq 0}}$ converges to $x$, $d(x, a_n) < 1/n$ and $d(x, b_n) < 1/n$ for any $n \in \Z_{> 0}$, where $b_n$ is the first return of $a_n$ for the first return map $f_T$ on the open transverse arc $T$. 
Renaming $T_+$ and $T_-$ and taking a subsequence of $(a_n)_{n \in \Z_{> 0}}$, we may assume that $a_n \in T_+$ for any $n \in \Z_{> 0}$ and that the sequences $(d(x, a_n))_{n \in \Z_{> 0}}$ and $(d(x, b_n))_{n \in \Z_{> 0}}$ of distances monotonically decrease. 

We claim that there are at most finitely many limit cycles each of which is the $\alpha$-limit or the $\omega$-limit sets of some point in $(a_n)_{n \in \Z_{> 0}}$. 
Indeed, assume that there are pairwise distinct infinitely many limit cycles $C_n$ each of which is the $\omega$-limit sets of some point in $(a_n)_{n \in \Z_{> 0}}$. 
By the finiteness of genus, taking a subsequence of $(C_n)_{n \in \Z_{> 0}}$ if necessary, we may assume that any limit cycles $C_n$ are parallel to each other. 
This means that the boundary $\partial U_x$ contains at most two of $C_n$. 
By construction, the boundary of the connected component $U_x$ of the complement $S - \bigsqcup_{n \in \Z_{> 0}} C_n$ contains $\bigsqcup_{n \in \Z_{> 0}} C_n$, which contradicts that $\partial U_x$ contains at most two of $C_n$. 
By symmetry of the time reversion, there are at most finitely many limit cycles each of which is the $\alpha$-limit sets of some point in $(a_n)_{n \in \Z_{> 0}}$. 

We claim that there are at most finitely many $\omega$-limit and $\alpha$-limit sets such that each of $\omega(a_n)$ and $\alpha(a_n)$ is one of them. 
Indeed, assume that there are pairwise distinct infinitely many $\omega$-limit sets of any points $a_n$. 
By the generalization of the Poincar\'e-Bendixson theorem for a flow with finitely many singular points, the $\omega$-limit and $\alpha$-limit sets of any points $a_n$ are singular points or limit circuits. 
By the finiteness of singular points and the previous claim, there are pairwise distinct infinitely many non-periodic limit circuits $C_n$ which are the $\omega$-limit sets of some points in $(a_n)_{n \in \Z_{> 0}}$.
Taking a subsequence of $(a_n)_{n \in \Z_{> 0}}$ and by time reversion if necessary, we may assume that $C_n = \omega(a_n)$. 
Since any collars for semi-attracting limit circuits $C_n$ are pairwise disjoint, the small associated collars are pairwise disjoint. 
By Lemma~\ref{lem:lc_transersal}, there are infinitely many closed transversals $\gamma_n$ with $\gamma_n \cap O^+(a_n) \neq \emptyset$ in the pairwise disjoint associated collars which are not contractible in their associated collars respectively.
By the finite existence of $\Sv$, taking a subsequence of $(a_n)_{n \in \Z_{> 0}}$, any closed transversals $\gamma_n$ are not contractible in $S$, because any contractible closed transversal bounds a disk containing a singular point by the Poincar\'e-Hopf theorem.
Then any connected components of the complement $S - \bigcup_{n} \gamma_n$ have at most finitely many boundary components. 
Denote by $U_x$ the connected component of the complement $S - \bigcup_{i} \gamma_i$ containing $x$. 
Then $U_x$ is invariant. 
Let $\mu_1, \ldots ,\mu_k$ be the boundary components of $U_x$ which are contained in $\bigcup_{i} \gamma_i$. 
Replacing $T$ with a small open subarc, we may assume that $T \cap \bigsqcup_{j=1}^k \mu_j = \emptyset$. 
Taking a subsequence of $(a_n)_{n \in \Z_{> 0}}$,  we may assume that $\gamma_i = \mu_i$ for any $j \in \{ 1,2, \ldots , k \}$. 
Then $\overline{U_x} \cap \bigsqcup_{n>k} \gamma_n = \emptyset$ and $\overline{U_x} = U_x \sqcup \bigsqcup_{j = 1}^k \gamma_j$. 
Fix any $n > k$. 
Since $\gamma_n \cap O^+(a_n) \neq \emptyset$, we obtain that $O^+(a_n) \setminus \overline{U_x} \neq \emptyset$. 
The invariance of $U_x$ implies that $a_n \notin U_x$, which contradicts $a_n \in T \subset U_x$. 
By symmetry of the time reversion, the claim is completed. 

Taking a subsequence of $(a_n)_{n \in \Z_{> 0}}$ if necessary, 
we may assume that there are closed invariant subsets $\alpha$ and $\omega$ in $S - \overline{T}$ such that $\alpha = \alpha(a_n)$ and $\omega = \omega(a_n)$ for any $n  \in \Z_{> 0}$. 
Since $x$ is not contained in any limit circuits, cutting limit circuits and collapsing the new boundary components into singletons if necessary, we may assume that each of $\alpha$ and $\omega$ is a singular point. 
Set $U_n$ the connected component of the complement $S - (\{ \alpha, \omega \} \sqcup \bigsqcup_{n\in \Z_{> 0}} O(a_n))$ containing $T_{(a_n, a_{n+1})}$. 
By the finite existence of genus and $\Sv$, taking a subsequence of $(a_n)_{n \in \Z_{> 0}}$ if necessary,  we may assume that any connected components $U_n$ are invariant open disks such that $\{ \alpha \} = \bigcup_{a \in U_n} \alpha(a)$ and $\{ \omega \} = \bigcup_{a \in U_n} \omega(a)$. 
By construction, the union $U_\infty := \bigsqcup_{n \in \Z_{>0}} U_n \sqcup O^-(a_{n+1})$ is an  invariant open disk and an invariant trivial flow box with $\{ \alpha \} = \bigcup_{a \in U_\infty} \alpha(a)$ and $\{ \omega \} = \bigcup_{a \in U_\infty} \omega(a)$ such that the sequence $(a_n)_{n \in \Z_{> 0}}$ in $T_+ \cap U_\infty$ converges to $x \in T_+ \cap \partial U_\infty$. 
Let $T_{(a,b)} \subset T$ be the open subinterval between $a$ and $b$ of $T$. 
Then there is a point $a_+ \in T_+ \cap U_\infty$ such that the subinterval $T_{(a_+,x)}$ is contained in $U_\infty$. 
Since $U_\infty$ is an invariant trivial flow box, the union $D := \bigsqcup_{a \in T_{(a_+,x)}} O^-(a) \subset U_\infty$ is a negative invariant open disk with $\{ \alpha \} = \bigcup_{a \in D} \alpha(a)$ and $T_{(a_+,x)} \sqcup \{ x \} \subset \partial D$. 
Because the sequence $(a_n)_{n \in \Z_{> 0}}$ in $T_+$ converges to $x$ and $b_n$ is the first return of $a_n$ for the first return map $f_T$ with $d(x, a_n) < 1/n$ and $d(x, b_n) < 1/n$ for any $n \in \Z_{> 0}$, 
there is a positive integer $N$ such that $a_n \in T_{(a_+,x)} \subset T_+ \cap U_\infty$ and $b_n \in T_-$ for any $n \in \Z_{> N}$. 
From $\lim_{m \to \infty} a_m = x = \lim_{m \to \infty} b_m$, by $a_n \in T_{(a_+,x)} \subset U_\infty$ and  $b_n \in T_- \cap O^+(a_n) \subset U_\infty$ for any $n \in \Z_{> N}$, since $U_\infty$ is an invariant trivial flow box and $T$ is an open transverse interval, there is a point $a_- \in T_-$ such that $T_{(x, a_-)} \subset U_\infty$ with $f_T(T_{(a_+,x)}) = T_{(x, a_-)}$ such that the restriction $f_T|_{T_{(a_+,a_-)}} \colon T_{(a_+,x)} \to T_{(x, a_-)}$ is orientation-reversing. 
Applying Lemma~\ref{lem:bdry_circuit} to $I := T_{(a_+, a_-)}$, there is a circuit $\nu$ in $\partial U_\infty$ with wandering holonomy such that $x \in \nu$.  
\end{proof}

We have the following classification of a non-recurrent point under the non-existence of non-closed recurrent orbits. 

\begin{lemma}\label{lem:wandering_pt}
Let $v$ be a flow with finitely many singular points on a compact surface $S$. 
Suppose that there are no non-closed recurrent points. 
Then one of the following statements holds exclusively for a non-recurrent point $x$: 
\\
{\rm(1)} The orbit $O$ is non-wandering and satisfies either {\rm(1.1)}, {\rm(1.2)}, or {\rm(1.3)} exclusively: 
\\
{\rm(1.1)} There is a strict limit circuit in $\Omega(v)$ containing $x$. 
\\
{\rm(1.2)} There is a circuit with wandering holonomy containing $x \in \Omega(v)$.
\\
{\rm(1.3)} There is a circuit in $\overline{\Pv} - \Pv \subseteq \Omega(v)$ containing $x$. 
\\
{\rm(2)} The point $x$ is wandering {\rm(i.e.} $x \notin \Omega(v)${\rm)}. 
\end{lemma}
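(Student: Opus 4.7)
My plan is to classify $x$ according to whether it is wandering and, if not, whether it lies in $\overline{\Cv}$. The hypothesis $\mathrm{R}(v) = \emptyset$ collapses the decomposition $S = \Cv \sqcup \mathrm{P}(v) \sqcup \mathrm{R}(v)$ to $S = \Cv \sqcup \mathrm{P}(v)$, so the non-recurrent point $x$ sits in $\mathrm{P}(v)$ and is, in particular, non-singular. If $x \notin \Omega(v)$ I am in case (2); otherwise $x \in \Omega(v) \cap \mathrm{P}(v)$, and I split further.

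First suppose $x \notin \overline{\Cv}$, so $x \in \Omega(v) - \overline{\Cv}$. Then Lemma~\ref{lem:wandering_h} places $x$ either on a circuit with wandering holonomy, which is case (1.2), or on a limit circuit $\gamma$. In the latter case $\gamma$ is non-periodic because $x \in \mathrm{P}(v)$ is not periodic, so only strictness remains. If $\gamma$ is one-sided it is strict by definition. Otherwise I take $\mu := O(x) \subseteq \gamma$ (a separatrix, since $x$ is non-recurrent and non-singular, forcing $\overline{O(x)} - O(x) \subseteq \Sv$ inside the non-trivial circuit $\gamma$) and a small open transverse arc $T$ meeting $\mu$ at $x$. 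On the side $T_1$ lying in the semi-attracting (or semi-repelling) collar of $\gamma$, the limit-circuit hypothesis makes $f_v|_{T_1}$ attracting (respectively repelling) at $x$. On the opposite side $T_2$, the assumption $x \notin \overline{\Cv}$ ensures that after shrinking $T$ no periodic orbit crosses $T_2$, so $f_v|_{T_2}$ has no fixed points. Hence $\gamma$ is a strict limit circuit and case (1.1) holds.

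Next suppose $x \in \Omega(v) \cap \mathrm{P}(v) \cap \overline{\Cv}$. Because $\Sv$ is finite and $x$ is non-singular, $x$ must be a limit of periodic orbits, so $x \in \overline{\Pv} - \Pv$. I then mimic the construction behind Lemma~\ref{lem:circuit}: a transverse arc through $x$ carries a sequence of periodic return points $x_n \to x$, the saturated invariant annulus bounded by the corresponding periodic orbits and $O(x)$ is isolated via a local $C^1$ modification of the flow, and the generalized Poincar\'e--Bendixson theorem applied to the modified flow produces a circuit $\gamma$ of the original $v$ that contains $x$. Each point of $\gamma$ lies in $\overline{\Pv}$ by construction, and no point of $\gamma$ is periodic (its edges are separatrices in $\mathrm{P}(v)$ and its vertices are in $\Sv$), so $\gamma \subseteq \overline{\Pv} - \Pv$. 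Since limits of periodic orbits are non-wandering, $\overline{\Pv} - \Pv \subseteq \Omega(v)$, placing us in case (1.3).

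Exclusivity should follow from tracking the location of $x$: (1.1) forces $x \notin \overline{\Cv}$ through the ``no fixed points on $T_2$'' clause of strictness, (1.3) forces $x \in \overline{\Cv}$, and a circuit with wandering holonomy yields a first return map near $x$ whose domain and codomain have empty intersection, which is incompatible both with the attracting/repelling behavior on $T_1$ in (1.1) and with the periodic-orbit accumulation in (1.3). The main obstacle I expect is the strictness step in subcase one: I must guarantee that $f_v|_{T_2}$ has no fixed points on all of $T_2$, not just locally near $x$, which requires shrinking $T$ using $x \notin \overline{\Cv}$ and controlling the opposite collar of $\gamma$ via Lemma~\ref{lem:lc_transersal}. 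A secondary subtlety is verifying that the circuit produced in (1.3) really contains $x$ as part of one of its separatrix edges, but this is immediate because $x$ lies on the common boundary orbit $O(x)$ of the approximating invariant annulus.
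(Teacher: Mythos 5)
Your proof is correct and takes essentially the same route as the paper's: after reducing to $S = \Cv \sqcup \mathrm{P}(v)$, you handle $x \in \overline{\Pv}$ via (a re-run of) the construction behind Lemma~\ref{lem:circuit} to land in case (1.3), and you handle $x \in \Omega(v) - \overline{\Cv}$ via Lemma~\ref{lem:wandering_h} plus the one-sided/no-fixed-points-on-$T_2$ argument for strictness to land in (1.1) or (1.2), with the wandering case remaining. The only differences are organizational (you split on $x \in \overline{\Cv}$ up front where the paper phrases the same dichotomy as successive ``we may assume'' reductions), and your exclusivity discussion is no less rigorous than the paper's own.
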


\begin{proof}
By Gutierrez's smoothing theorem~\cite{gutierrez1978structural}, we may assume that $v$ is smooth. 
Since $\Pv \subseteq \Omega(v)$, the closedness of $\Omega(v)$ implies that $\overline{\Pv} \subseteq \Omega(v)$. 
Fix a non-recurrent point $x$. 
We may assume that there is no circuit $\gamma \in \overline{\Pv}$ containing $x$. 
The closedness of $\Sv$ implies that there is no circuit $\gamma \in \overline{\Cv}$ containing $x$. 
Lemma~\ref{lem:circuit} implies $x \notin \overline{\Pv}$ and so $x \notin \overline{\Cv}$. 
If $x$ is contained in a circuit with wandering holonomy, then $x \in \Omega(v)$. 
Thus we may assume that $x$ is not contained in a circuit with wandering holonomy. 

We claim that every limit circuit containing $x$ is a strict limit circuit with $x \in \Omega(v)$. 
Indeed, suppose that $x$ is contained in a limit circuit $\gamma$. 
Then $x \in \Omega(v) \cap (\gamma \setminus \overline{\Cv}) = (\Omega(v) - \overline{\Cv}) \cap \gamma$. 
If $\gamma$ is one-sided, then it is a strict limit circuit. 
Thus we may assume that $\gamma$ is not one-sided. 
Let $\A_+$ be a small associated collar of the limit circuit $\gamma$ such that $\gamma$ is either semi-attracting or semi-repelling with respect to $\A_+$. 
Since $\gamma \setminus \overline{\Cv} \neq \emptyset$, there is a small open transverse arc $I$ containing a non-recurrent point $x_0 \in \gamma \setminus \overline{\Cv}$. 
Denote by $I_-$ the connected component of $I - \{ x_0 \}$ which does not intersect $\A_+$. 
By $\gamma \setminus \overline{\Pv} \neq \emptyset$, the first return map on $I_-$ has no fixed points, and so the circuit $\gamma$ is a strict limit circuit. 

Thus we may assume that $x$ is contained in neither a limit circuit nor a circuit with wandering holonomy. 
We claim that $x$ is wandering. 
Indeed, $x$ is non-wandering. 
Since the point $x \in \Omega(v) - \overline{\Pv}$ is not contained in any limit circuits, Lemma~\ref{lem:wandering_h} implies that there is a circuit with wandering holonomy containing $x$, which contradicts that $x$ is not contained in any circuits with wandering holonomy.
\end{proof}

%
%
%
%
%

 In the spherical case, we have the following classification of non-closed orbits. 
 
\begin{corollary}\label{cor:wandering_pt}
Let $v$ be a flow with finitely many singular points on a compact surface contained in a sphere. 
Then one of the following statements holds exclusively for a non-closed point $x$: 
\\
{\rm(1)} The orbit $O$ is non-wandering and satisfies either {\rm(1.1)} or {\rm(1.2)} exclusively: 
\\
{\rm(1.1)} The point $x$ is contained in a strict limit circuit and $x \in \Omega(v)$. 
\\
{\rm(1.2)} There is a circuit $\gamma \in \overline{\Pv} - \Pv \subseteq \Omega(v)$ containing $x$. 
\\
{\rm(2)} The point $x$ is wandering {\rm(i.e.} $x \notin \Omega(v)${\rm)}. 
\end{corollary}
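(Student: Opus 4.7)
The plan is to deduce Corollary~\ref{cor:wandering_pt} directly from Lemma~\ref{lem:wandering_pt} by verifying its hypothesis and ruling out one of its three non-wandering subcases in the spherical setting.

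First, I would observe that any compact surface $S \subseteq \mathbb{S}^2$ is orientable, since subsurfaces of an orientable surface are orientable. Applying the generalization of the Poincar\'e--Bendixson theorem for flows with finitely many singular points (cf. \cite[Theorem~2.6.1]{nikolaev1999flows}) on $S$, every $\omega$-limit and $\alpha$-limit set is a singular point, a periodic orbit, or a limit circuit. A short case analysis shows that no non-closed point $x$ can lie in $\omega(x) \cup \alpha(x)$: if the limit set is a singular point or a periodic orbit then $O(x)$ is forced to coincide with that closed orbit, and on a non-trivial limit circuit the only non-closed points lie on separatrices, whose $\omega$- and $\alpha$-limit sets are the singular endpoints rather than the full circuit. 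Hence $\mathrm{R}(v) = \emptyset$, so every non-closed point is non-recurrent and the hypothesis of Lemma~\ref{lem:wandering_pt} is satisfied.

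Next, I would rule out circuits with wandering holonomy on $S$. Since $S$ is orientable, every time-$t$ map $v_t$ is orientation-preserving (these maps form a continuous family starting at the identity), and consequently the first return map on any small open transverse arc through a non-singular point is orientation-preserving when the arc is oriented compatibly with the surface orientation together with the flow direction (cf.\ \cite[Chapter~1]{aranson1996introduction}). This directly contradicts the orientation-reversing condition in the definition of a circuit with wandering holonomy, so no such circuit exists on $S$.

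Finally, I would apply Lemma~\ref{lem:wandering_pt} to an arbitrary non-closed, hence non-recurrent, point $x$. The mutually exclusive possibilities (1.1), (1.2), (1.3), and (2) of the lemma reduce, after discarding (1.2) by the orientability argument, to exactly the three mutually exclusive outcomes (1.1), (1.2), and (2) of the corollary. The main subtlety I anticipate is stating the orientation argument for the first return map cleanly within the paper's framework; everything else is a direct reduction to Lemma~\ref{lem:wandering_pt}.
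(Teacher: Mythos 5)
Your proposal is correct and follows essentially the same route as the paper: rule out non-closed recurrent points on subsurfaces of the sphere, use orientability to exclude circuits with wandering holonomy, and then apply Lemma~\ref{lem:wandering_pt} with case (1.2) discarded. The only difference is that you spell out justifications (via Poincar\'e--Bendixson and the orientation-preserving return map) for the two facts the paper simply asserts, which is fine.
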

 
\begin{proof}
Since any flows on compact surfaces contained in a sphere have no non-closed recurrent orbits, any non-recurrent orbits are non-closed.
By definition of wandering holonomy, the orientability of the sphere implies the non-existence of circuits with wandering holonomy.  
Lemma~\ref{lem:wandering_pt} implies the assertion. 
\end{proof}

We have the following sufficient condition for the denseness of closed orbits with respect to 
a flow with finitely many singular points on a surface. 

\begin{lemma}\label{lemp:sufficient_dense}
Let $v$ be a flow with finitely many singular points on a compact surface $S$ without non-closed recurrent points, strict limit circuits, nor circuits with wandering holonomy. 
Then $\overline{\mathop{\mathrm{Cl}}(v)} = \Omega(v)$. 
\end{lemma}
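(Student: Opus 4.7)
The plan is to prove the nontrivial inclusion $\Omega(v) \subseteq \overline{\Cv}$, since the reverse inclusion $\overline{\Cv} \subseteq \Omega(v)$ is automatic from the fact that closed orbits are non-wandering and $\Omega(v)$ is closed. The key observation is that under the three hypotheses of the lemma, the classification of Lemma~\ref{lem:wandering_pt} is available to every non-recurrent point and collapses to a single case.

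Concretely, I would fix $x \in \Omega(v)$ and split on whether $x$ is recurrent. If $x$ is recurrent, then by the assumed absence of non-closed recurrent points we have $x \in \Cv \subseteq \overline{\Cv}$, and we are done. If $x$ is non-recurrent, then the hypothesis $\mathrm{R}(v) = \emptyset$ puts us exactly in the setting where Lemma~\ref{lem:wandering_pt} applies. Since $x \in \Omega(v)$, alternative (2) of that lemma (wandering) is ruled out, so one of (1.1), (1.2), (1.3) must hold. By assumption there are no strict limit circuits, so (1.1) fails; by assumption there are no circuits with wandering holonomy, so (1.2) fails. Hence (1.3) must hold, giving a circuit $\gamma \subseteq \overline{\Pv} - \Pv$ containing $x$. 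In particular $x \in \overline{\Pv}$, and since $\Pv \subseteq \Cv$ we conclude $x \in \overline{\Pv} \subseteq \overline{\Cv}$.

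There is no serious obstacle here, because all of the heavy lifting is carried out by Lemma~\ref{lem:wandering_pt}: the current lemma is essentially a translation of that classification into a density statement once the three obstructing phenomena have been excluded. The only things to verify carefully are the elementary inclusions $\overline{\Cv} \subseteq \Omega(v)$ and $\overline{\Pv} \subseteq \overline{\Cv}$, and the observation that the hypothesis $\mathrm{R}(v) = \emptyset$ legitimately triggers Lemma~\ref{lem:wandering_pt}, whose precondition is exactly the non-existence of non-closed recurrent points.
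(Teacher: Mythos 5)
Your proof is correct and takes essentially the same route as the paper: both arguments hinge on Lemma~\ref{lem:wandering_pt}, use the hypotheses to rule out the strict-limit-circuit and wandering-holonomy alternatives, and conclude from the remaining case that every non-recurrent non-wandering point lies in $\overline{\mathop{\mathrm{Per}}(v)} \subseteq \overline{\mathop{\mathrm{Cl}}(v)} \subseteq \Omega(v)$. The only difference is cosmetic: the paper fixes a point of $\mathrm{P}(v)$ and writes out the resulting decomposition of $\Omega(v)$, whereas you fix a point of $\Omega(v)$ and split on recurrence.
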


\begin{proof}
The non-existence of non-closed recurrent points implies that $S = \Cv \sqcup \mathrm{P}(v)$. 
Fix a point $x \in \mathrm{P}(v)$. 
From Lemma~\ref{lem:wandering_pt}, either $x \notin \Omega(v)$ or there is a circuit $\gamma \in \overline{\Pv} \subseteq \Omega(v)$ containing $x$.  
Thus either $x \in \mathrm{P}(v) \setminus \Omega(v)$ or $x \in \overline{\Pv} \subseteq \Omega(v)$. 
By $\overline{\Pv} \subseteq \Omega(v)$, we have $\mathrm{P}(v) = (\mathrm{P}(v) \setminus \Omega(v)) \sqcup ((\mathrm{P}(v) \cap \overline{\Pv})$ and so $\mathrm{P}(v) \cap \Omega(v) = \mathrm{P}(v) \cap \overline{\Pv}$. 
Since $S - \mathrm{P}(v) = \Cv \subseteq \Omega(v)$, we obtain $\Omega(v) = S \cap \Omega(v) = (\Cv \sqcup \mathrm{P}(v)) \cap \Omega(v) = (\Cv \cap \Omega(v)) \sqcup (\mathrm{P}(v) \cap \Omega(v)) = \Cv \sqcup (\mathrm{P}(v) \cap \overline{\Pv}) \subseteq \overline{\Cv} \subseteq \Omega(v)$. 
This means that $\overline{\mathop{\mathrm{Cl}}(v)} = \Omega(v)$. 
\end{proof}

Proposition~\ref{prop:necessary_dense} and Lemma~\ref{lemp:sufficient_dense} imply Proposition~\ref{thm:finite_dense}. 

%

\section{Characterizations of density of closed orbits of flows with finitely many singular points}

\subsection{Topological characterization for the case with finitely many connected components of the singular point set}

To generalize Proposition~\ref{thm:finite_dense} into a characterization of the denseness of closed orbits for a flow with finitely many connected components of the singular point set, we introduce some concepts (see \cite{yokoyama2017decompositions} for details of these constructions).

\subsubsection{Blow-downs for singular points}
We define blow-downs for the singular point set of a surface and their flows as follows (see Figure~\ref{Fig:blowdown}):
\begin{figure}
\[
\xymatrix@=18pt{
S \ar@{}[d]|{\bigcup} & & S_{\mathrm{mc}}\ar[ll]_{\pi_{\mathrm{mc}}} \ar[rr]^q \ar@{}[d]|{\bigcup} & &  S_{\mathrm{col}} \ar@{}[d]|{\bigcup} \\
S - \mathop{\mathrm{Sing}}(v) & &  S_{\mathrm{mc}} - \mathop{\mathrm{Sing}}(v_{\mathrm{mc}}) \ar@{=}[ll]_{\pi_{\mathrm{mc}}|}
 \ar@{=}[rr]^{q|} & &  S_{\mathrm{col}} - \mathop{\mathrm{Sing}}(v_{\mathrm{col}})}
\]
\caption{Canonical quotient mappings induced by the metric completion and the collapse.}
\label{Fig:blowdown}
\end{figure}
Let $v$ be a flow on a surface $S$ whose singular point set has at most finitely many connected components. 
Since the singular point set $\mathop{\mathrm{Sing}}(v)$ is closed, the complement $S - \mathop{\mathrm{Sing}}(v)$ is open and so is a surface. 
In particular, the set difference $S - (\mathop{\mathrm{Sing}}(v) \cup \partial S)$ is an open surface without boundary. 
Fix a Riemannian metric on $S$ such that $\mathop{\mathrm{Sing}}(v)$ is bounded. 
Denote by $S_{\mathrm{mc}}$ the metric completion of the complement $S - \mathop{\mathrm{Sing}}(v)$. 
Identifying the union $\partial$ of new boundary components with the new singular points, define a flow $v_{\mathrm{mc}}$ on $S_{\mathrm{mc}}$ such that $O_v(x) = O_{v_{\mathrm{mc}}}(x)$ for any point $x \in S - \mathop{\mathrm{Sing}}(v) = S_{\mathrm{mc}} - \mathop{\mathrm{Sing}}(v_{\mathrm{mc}})$ up to topological equivalence. 
Then $\partial = \mathop{\mathrm{Sing}}(v_{\mathrm{mc}})$ and so $S - \mathop{\mathrm{Sing}}(v) = S_{\mathrm{mc}} - \mathop{\mathrm{Sing}}(v_{\mathrm{mc}})$. 
By \cite[Theorem~3]{reeb1952certaines}, each connected component of the open surface $S - (\mathop{\mathrm{Sing}}(v) \cup \partial S)$ 
 without boundary is homeomorphic to the resulting surface from a closed surface by removing a closed totally disconnected subset. 
Therefore, collapsing each connected component of $\mathop{\mathrm{Sing}}(v_{\mathrm{mc}})$ into a singular point (as in Figure~\ref{blowup}), we obtain the resulting flow $v_{\mathrm{col}}$ with totally disconnected singular points, called the blow-down flow of $v$, on the resulting surface $S_{\mathrm{col}}$, called the blow-down surface, up to topological equivalence. 
\begin{figure}
\begin{center}
\includegraphics[scale=0.1]{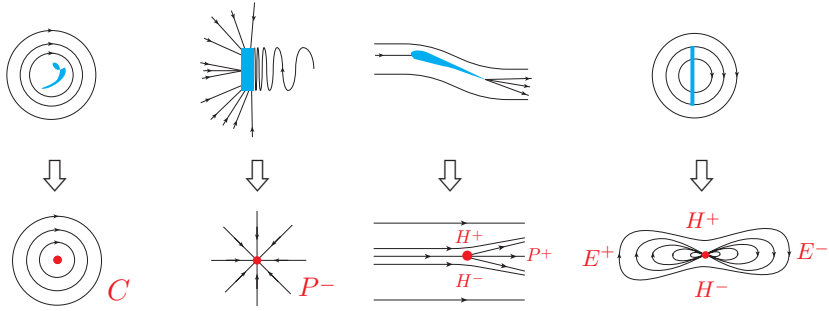}
\end{center}
\caption{Blow-downs of finitely many connected components of the singular point set.}
\label{blowup}
\end{figure}
Then $O_v(x) = O_{v_{\mathrm{mc}}}(x) = O_{v_{\mathrm{col}}}(x)$ for any point $x \in S - \mathop{\mathrm{Sing}}(v) = S_{\mathrm{mc}} - \mathop{\mathrm{Sing}}(v_{\mathrm{mc}}) = S_{\mathrm{col}} -  \mathop{\mathrm{Sing}}(v_{\mathrm{col}})$. 
Notice that $S_{\mathrm{mc}}$ and $S_{\mathrm{col}}$ may have infinitely many connected components but that each connected component of $S_{\mathrm{col}}$ is a compact surface.

\subsubsection{Concepts related to blow-downs}
We recall some concepts to state the characterization of the density of closed orbits in the non-wandering set. 
%
%
An invariant subset $\gamma$ is a blow-up of a circuit with wandering holonomy if the image $q(\pi_{\mathrm{mc}}^{-1}(\gamma))$ is a circuit with wandering holonomy for $v_{\mathrm{col}}$. 
%
A closed connected invariant subset is a non-trivial quasi-circuit if it is a boundary component of an open annulus, contains a non-recurrent point, and consists of non-recurrent points and singular points. 
A non-trivial quasi-circuit $\gamma$ is a quasi-semi-attracting (resp. quasi-semi-repelling) limit quasi-circuit (with respect to a small collar $\A$) if there is a point $x \in \A$ with $O^+(x) \subset \A$ (resp. $O^-(x) \subset \A$) such that $\omega(x)  = \gamma$ (resp. $\alpha(x)  = \gamma$). 
A non-trivial quasi-circuit is a limit quasi-circuit (with respect to a small collar $\A$) if it is a quasi-semi-attracting/quasi-semi-repelling limit quasi-circuit with respect to $\A$. 
%
%
%
By construction, we have the following observation. 

\begin{lemma}\label{lem:blow_down}
The following statements hold for a flow $v$ with finitely many connected components of the singular point set on a compact surface $S$: 
\\
{\rm(1)} $\overline{\mathop{\mathrm{Cl}}(v)} = \Omega(v)$ if and only if $\overline{\mathop{\mathrm{Cl}}(v_{\mathrm{col}})} = \Omega(v_{\mathrm{col}})$.
\\
{\rm(2)} $\mathrm{R}(v) = \emptyset$ if and only if $\mathrm{R}(v_{\mathrm{col}}) = \emptyset$. 
\\
{\rm(3)} The $\omega$-limit set $\omega_v(x)$ of a non-singular point $x$ on $S$ is a limit quasi-circuit if and only if the $\omega$-limit set $\omega_{v_{\mathrm{col}}}(x)$ is a non-periodic limit circuit. 
\\
{\rm(4)} 
There are no strict limit quasi-circuits for $v$ if and only if there are no strict limit circuits for $v_{\mathrm{col}}$. 
\\
{\rm(5)} There are no blow-ups of circuits with wandering holonomy for $v$ if and only if there are no circuits with wandering holonomy for $v_{\mathrm{col}}$. 
\end{lemma}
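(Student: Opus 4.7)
The plan is to exploit the central compatibility property built into the construction of $v_{\mathrm{col}}$: the composition $q\circ\pi_{\mathrm{mc}}^{-1}$ restricts to a homeomorphism $\Phi\colon S-\mathop{\mathrm{Sing}}(v)\to S_{\mathrm{col}}-\mathop{\mathrm{Sing}}(v_{\mathrm{col}})$ that is an orbit equivalence of $v$ and $v_{\mathrm{col}}$ (this is the diagram displayed in Figure~\ref{Fig:blowdown} together with the identity $O_v(x)=O_{v_{\mathrm{col}}}(x)$ on non-singular points). Every item in the lemma will then be deduced by transporting the relevant dynamical object through $\Phi$ and checking the local behaviour near the collapsed singular set, which is finite on both sides.

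Item (2) is the most direct: non-closed recurrent points are automatically non-singular, and recurrence of a non-singular point is a topological invariant, so $\Phi$ identifies $\mathrm{R}(v)$ with $\mathrm{R}(v_{\mathrm{col}})$; item (5) is essentially definitional, since the notion of blow-up of a circuit with wandering holonomy is defined precisely by the condition that $q(\pi_{\mathrm{mc}}^{-1}(\gamma))$ is such a circuit for $v_{\mathrm{col}}$. For item (1) I would first note that $\mathop{\mathrm{Per}}(v)$ and $\mathop{\mathrm{Per}}(v_{\mathrm{col}})$ correspond under $\Phi$ while $\mathop{\mathrm{Sing}}(v)$ is mapped onto $\mathop{\mathrm{Sing}}(v_{\mathrm{col}})$ by the collapse of connected components to singletons, so $\mathop{\mathrm{Cl}}(v)$ and $\mathop{\mathrm{Cl}}(v_{\mathrm{col}})$ match set-theoretically up to this collapse; then the equivalence $x\in\Omega(v)\iff\Phi(x)\in\Omega(v_{\mathrm{col}})$ on non-singular points (with singular points trivially non-wandering on both sides), combined with the fact that a sequence in $S-\mathop{\mathrm{Sing}}(v)$ converges to a point of $\mathop{\mathrm{Sing}}(v)$ iff its $\Phi$-image converges to the corresponding singular point of $v_{\mathrm{col}}$, gives the equivalence of $\overline{\mathop{\mathrm{Cl}}(v)}=\Omega(v)$ and $\overline{\mathop{\mathrm{Cl}}(v_{\mathrm{col}})}=\Omega(v_{\mathrm{col}})$.

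For item (3), a limit quasi-circuit $\gamma$ consists of non-recurrent orbits and singular points and is a boundary component of an open annulus carrying a one-sided $\omega$- or $\alpha$-approaching orbit; its image $q(\pi_{\mathrm{mc}}^{-1}(\gamma))$ collapses each connected component of $\gamma\cap\mathop{\mathrm{Sing}}(v)$ to a point, so the image is a cycle or a finite directed graph of separatrices through finitely many singular points, hence a non-trivial circuit, and non-periodic because $\gamma$ contains a non-recurrent orbit. The quasi-semi-attracting/quasi-semi-repelling collar of $\gamma$ maps to a semi-attracting/semi-repelling collar of the image under $\Phi$, establishing the forward direction; the reverse direction is obtained by pulling back through $(q\circ\pi_{\mathrm{mc}}^{-1})^{-1}$. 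Item (4) is handled similarly: the defining local data of a strict limit circuit (one-sidedness, or a separatrix with a transverse arc on which the first return map is attracting/repelling on one side and fixed-point-free on the other) is supported on the non-singular part and transported faithfully by $\Phi$, so it matches between strict limit quasi-circuits of $v$ and strict limit circuits of $v_{\mathrm{col}}$.

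The main obstacle will be the careful bookkeeping in (3) and (4): one must verify that the transverse arcs and first-return maps used in the definitions can be chosen inside $S-\mathop{\mathrm{Sing}}(v)$ so that $\Phi$ applies, and that one-sidedness together with the identification of collars behaves correctly when a connected component of $\mathop{\mathrm{Sing}}(v)$ participates as part of the boundary of the collar, so that a neighbourhood of the collapsed point in $S_{\mathrm{col}}$ faithfully reflects the neighbourhood structure in $S$.
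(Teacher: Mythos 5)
Your proposal is correct and matches the paper's intent: the paper states this lemma as an observation following ``by construction,'' with no written proof, and the implicit justification is exactly your argument that the blow-down identifies the non-singular parts $S-\mathop{\mathrm{Sing}}(v)=S_{\mathrm{col}}-\mathop{\mathrm{Sing}}(v_{\mathrm{col}})$ orbitwise, so recurrence, closures of closed orbits, non-wandering behaviour, collars, and first-return data transport directly, while item (5) is definitional. Your elaboration is in fact more detailed than what the paper provides, so no discrepancy in approach arises.
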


Proposition~\ref{thm:finite_dense} and Lemma~\ref{lem:blow_down} imply the following statement.

\begin{corollary}\label{thm:main03}
Let $v$ be a flow with finitely many connected components of the singular point set  on a compact surface $S$.
The following conditions are equivalent:
\\
$(1)$ $\overline{\mathop{\mathrm{Cl}}(v)} = \Omega(v)$.
\\
$(2)$
There are neither non-closed recurrent points, strict limit quasi-circuits, nor blow-ups of circuits with wandering holonomy.
\\
$(3)$ Each orbit is proper, and there are neither strict limit quasi-circuits nor blow-ups of circuits with wandering holonomy.
\\
$(4)$ The orbit space $S/v$ is $T_0$, and there are neither strict limit quasi-circuits nor blow-ups of circuits with wandering holonomy.
\end{corollary}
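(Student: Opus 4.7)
The plan is to reduce everything to the blow-down flow $v_{\mathrm{col}}$, where the singular point set is totally disconnected with finitely many components, so Proposition~\ref{thm:finite_dense} applies, and then transfer each condition back to $v$ using Lemma~\ref{lem:blow_down}. The equivalence (1)$\Leftrightarrow$(2) will be essentially a dictionary translation, while (2)$\Leftrightarrow$(3)$\Leftrightarrow$(4) will come from basic properties of proper orbits recalled in \S2.

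For (1)$\Leftrightarrow$(2), I would argue as follows. By Lemma~\ref{lem:blow_down}(1), the condition $\overline{\mathop{\mathrm{Cl}}(v)}=\Omega(v)$ is equivalent to $\overline{\mathop{\mathrm{Cl}}(v_{\mathrm{col}})}=\Omega(v_{\mathrm{col}})$. Since each connected component of $S_{\mathrm{col}}$ is a compact surface and $v_{\mathrm{col}}$ has totally disconnected (hence, by the finite component hypothesis, finitely many) singular points on each component, Proposition~\ref{thm:finite_dense} applies to $v_{\mathrm{col}}$ and rephrases this as: there are no non-closed recurrent points, no strict limit circuits, and no circuits with wandering holonomy for $v_{\mathrm{col}}$. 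Parts (2), (4), and (5) of Lemma~\ref{lem:blow_down} then translate these three absences back into the absence of non-closed recurrent points, strict limit quasi-circuits, and blow-ups of circuits with wandering holonomy for $v$, which is condition (2).

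For (2)$\Leftrightarrow$(3), I would appeal to the decomposition $S = \mathop{\mathrm{Cl}}(v)\sqcup \mathrm{P}(v)\sqcup \mathrm{R}(v)$ and the remark (attributed to \cite{yokoyama2019properness} and Cherry's theorem) that the set of non-proper orbits coincides exactly with $\mathrm{R}(v)$. Hence ``each orbit is proper'' is literally the same as ``$\mathrm{R}(v)=\emptyset$'', i.e.\ the absence of non-closed recurrent points, and the remaining two clauses in (2) and (3) are identical.

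For (3)$\Leftrightarrow$(4), I would use the following standard fact: for a flow, the orbit space $S/v$ is $T_0$ if and only if every orbit is locally closed, which for orbits on a paracompact manifold is equivalent to being proper (by the characterisation in \S2). The forward direction is routine: if two distinct orbits $O_1,O_2$ each lie in the closure of the other, then neither can be open in its closure, so neither is proper. For the converse, Cherry's theorem (cited in \S2) shows that any non-closed recurrent orbit $O$ is contained in a continuum of distinct orbits sharing the same closure $\overline{O}$; picking two of these gives a pair of orbits indistinguishable in $S/v$, so $S/v$ fails to be $T_0$. Thus, modulo the unchanged conditions about strict limit quasi-circuits and blow-ups of circuits with wandering holonomy, (3) and (4) are equivalent, completing the chain. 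The main subtlety I expect is simply to invoke Lemma~\ref{lem:blow_down} cleanly for each of the four clauses in (1) and (2); the rest is formal.
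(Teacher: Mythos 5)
Your proposal is correct and follows essentially the same route as the paper, whose entire proof is the remark that Proposition~\ref{thm:finite_dense} together with Lemma~\ref{lem:blow_down} yields the statement; your treatment of (1)$\Leftrightarrow$(2) is exactly this reduction to the blow-down flow, and your handling of (3) and (4) just makes explicit the dictionary (non-proper $=$ non-closed recurrent via Cherry, and $T_0$ orbit space $\Leftrightarrow$ all orbits proper, cf.\ \cite[Theorem~3.3]{yokoyama2019properness}) that the paper leaves implicit. The only blemish is the parenthetical ``totally disconnected, hence finitely many'': finiteness of $\mathop{\mathrm{Sing}}(v_{\mathrm{col}})$ on each compact component comes from the finitely many connected components of $\mathop{\mathrm{Sing}}(v)$ being collapsed, not from total disconnectedness itself.
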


The previous theorem implies Corollary~\ref{main:a}. 
Note the finiteness of connected components of the singular point set is necessary.
In other words, $\overline{\mathop{\mathrm{Cl}}(v)} \subsetneq \Omega(v)$ for a flow $v$ on a compact surface in general.
Indeed, there is a flow $v$ on a compact surface $S$ with $\mathrm{R}(v) = \emptyset$ and $\overline{\mathop{\mathrm{Cl}}(v)} \subsetneq \Omega(v)$ but without strict limit circuits nor circuits with wandering holonomy (e.g. Lemma~\ref{lem:counter_exam00}).

\subsection{Density of closed orbits in the non-wandering set and correspondence for flows on non-compact surfaces}

Since some kinds of fluid phenomena are described as flows on non-compact domains, to describe such phenomena, we generalize our results into the case for non-compact surfaces. 
Therefore, we introduce end completions of surfaces with flows. 

\subsubsection{End completions of surfaces with finite genus} 
Recall the end completion, which is introduced by Freundenthal \cite{Freudenthal1931end}, as follows. 
Consider the direct system $\{K_\lambda\}$ of compact subsets of a topological space $X$ and inclusion maps such that the interiors of $K_\lambda$ cover $X$.  
There is a corresponding inverse system $\{ \pi_0( X - K_\lambda ) \}$, where $\pi_0(Y)$ denotes the set of connected components of a space $Y$. 
Then the set of ends of $X$ is defined to be the inverse limit of this inverse system. 
Notice that $X$ has one end $x_{\mathcal{U}}$ for each sequence $\mathcal{U} := (U_i)_{i \in \mathbb{Z}_{>0}}$ with $U_i \supseteq U_{i+1}$ such that $U_i$ is a connected component of $X - K_{\lambda_i}$ for some $\lambda_i$. 
Considering the disjoint union $X_{\mathrm{end}}$ of $X$ and  $\{ \pi_0( X - K_\lambda ) \}$ as a set, a subset $V$ of the union $X_{\mathrm{end}}$ is an open \nbd of an end $x_{\mathcal{U}}$ if there is some $i \in \mathbb{Z}_{>0}$ such that $U_i \subseteq V$. 
Then the resulting topological space $X_{\mathrm{end}}$ is called the end completion (or end compactification) of $X$. 
Note that the end completion is not compact in general. 
%
From \cite[Theorem~3]{richards1963classification}, any connected surfaces of finite genus are  homeomorphic to the resulting surfaces from closed surfaces by removing closed totally disconnected subsets. 
Therefore the end compactification $S_{\mathrm{end}}$ of a connected surface $S$ of finite genus is a closed surface. 

For a flow $v$ on a surface $S$ of finite genus, considering ends to be singular points, we obtain the resulting flow $v_{\mathrm{end}}$ on a surface $S_{\mathrm{end}}$ which is a union of closed surfaces. 
%
We have the following observation. 

\begin{lemma}\label{lem:blow_down02}
The following statements hold for a flow $v$ on a surface $S$ with finite genus: 
\\
{\rm(1)} Each orbit with respect to $v$ is proper if and only if each orbit with respect to $v_{\mathrm{end}}$ is proper. 
\\
{\rm(2)} The orbit space $S/v$ is $T_0$ if and only if the orbit space $S_{\mathrm{end}}/v_{\mathrm{end}}$ is $T_0$.
\end{lemma}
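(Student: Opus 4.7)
The key observation is that $S$ embeds as an open invariant subset of $S_{\mathrm{end}}$, and the complement $S_{\mathrm{end}} \setminus S$ consists exactly of the added ends, which are declared to be singular points of $v_{\mathrm{end}}$. Consequently, every orbit of $v$ is an orbit of $v_{\mathrm{end}}$, and the only new orbits of $v_{\mathrm{end}}$ are the singular orbits at the ends, which are trivially proper. The spirit of the argument parallels Lemma~\ref{lem:blow_down}, but the inclusion $S \hookrightarrow S_{\mathrm{end}}$ is an open embedding rather than a collapse, which makes things cleaner.

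For (1), I would use the characterization recalled earlier from \cite{yokoyama2019properness}: an orbit on a surface is proper if and only if it has a neighborhood in which it is closed. If an orbit $O \subset S$ is proper with respect to $v$, pick open $U \subset S$ with $O$ closed in $U$; since $S$ is open in $S_{\mathrm{end}}$, the set $U$ is also open in $S_{\mathrm{end}}$ with $O$ still closed in $U$, witnessing properness with respect to $v_{\mathrm{end}}$. Conversely, if $O \subset S$ is proper with respect to $v_{\mathrm{end}}$ via some open $U \subset S_{\mathrm{end}}$, then $U \cap S$ is open in $S$ and contains $O$ with $O$ closed in $U \cap S$. Combining this with the automatic properness of the end orbits yields the biconditional in (1).

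For (2), I would reduce to (1) via the identity $\overline{\{[O]\}} = \pi(\overline{O})$ in the quotient topology, where $\pi \colon S \to S/v$ is the projection; since closures of invariant sets are invariant, this shows that two orbits are topologically indistinguishable in $S/v$ iff they share the same closure in $S$. Using the decomposition $S = \mathop{\mathrm{Cl}}(v) \sqcup \mathrm{P}(v) \sqcup \mathrm{R}(v)$ from the preliminaries together with Cherry's theorem (also cited there), non-proper orbits are exactly non-closed recurrent orbits, and the closure of such an orbit contains uncountably many other orbits with the same closure. Hence $S/v$ is $T_0$ iff every orbit of $v$ is proper; the same reasoning applies to $v_{\mathrm{end}}$, and (2) follows from (1).

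The only real subtlety is the $T_0$-versus-properness equivalence, which requires Cherry's theorem to manufacture a distinct orbit sharing a given non-proper orbit's closure; the rest is essentially formal bookkeeping, as the openness of $S$ in $S_{\mathrm{end}}$ transfers the ``closed-in-a-neighborhood'' property across the inclusion automatically in both directions.
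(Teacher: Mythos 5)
Your proof of part (1) is correct, and it is in fact more self-contained than the paper's: you verify directly that the ``closed in a neighborhood'' characterization of properness from \cite{yokoyama2019properness} transfers across the open inclusion $S \hookrightarrow S_{\mathrm{end}}$ (openness of $S$ holding because the end set is closed, as the paper deduces from \cite{richards1963classification}), and that the added end orbits are singletons, hence proper. The paper goes the other way around: it quotes \cite[Theorem~3.3]{yokoyama2019properness} to assert that statements (1) and (2) of the lemma are equivalent, and then proves (2) directly --- one implication because $S/v$ is a subspace of $S_{\mathrm{end}}/v_{\mathrm{end}}$, the other because $S$ is open in $S_{\mathrm{end}}$ and the finitely many new points are closed singular orbits. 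So you prove (1) and derive (2) from it, while the paper proves (2) and derives (1) from it; both routes hinge on the same properness-versus-$T_0$ equivalence, which the paper cites and you attempt to re-derive via the identity $\overline{\{[O]\}} = \pi(\overline{O})$ and Cherry's theorem. Your closure identity and the observation that indistinguishability in $S/v$ amounts to equality of orbit closures in $S$ are both correct.

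There is, however, one direction you assert without argument, and it is exactly the direction your reduction needs in both implications of (2): ``every orbit proper $\Rightarrow$ the orbit space is $T_0$.'' Cherry's theorem only gives you the contrapositive of the other direction (a non-proper, i.e.\ non-closed recurrent, orbit produces distinct orbits with the same closure, so the quotient is not $T_0$); it says nothing about whether two \emph{distinct proper} orbits could share a closure. The gap is easy to fill with the tool you already use: if $O_1 \neq O_2$ are distinct orbits with $\overline{O_1} = \overline{O_2}$ and $O_1$ is proper, choose an open set $U \supseteq O_1$ with $\overline{O_1} \cap U = O_1$; then $O_2 \cap U \subseteq \overline{O_1} \cap U = O_1$ forces $O_2 \cap U = \emptyset$, while $O_1 \subseteq \overline{O_2}$ forces $U \cap O_2 \neq \emptyset$, a contradiction. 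Hence all orbits proper implies distinct orbits have distinct closures, which by your closure identity gives $T_0$. Alternatively, you could simply cite \cite[Theorem~3.3]{yokoyama2019properness} for the full equivalence, as the paper does, and your argument then goes through as written.
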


\begin{proof}
\cite[Theorem~3.3]{yokoyama2019properness} implies that the assertions {\rm(1)} and {\rm(2)} are equivalent. 
Note that the end completion for $S$ adds finitely many singular points with respect to the resulting flow $v_{\mathrm{end}}$. 
If $S_{\mathrm{end}}/v_{\mathrm{end}}$ is $T_0$, then so is the subspace $S/v_{\mathrm{end}} = S/v$. 
Conversely, suppose that $S/v$ is $T_0$.
From \cite[Theorem~3]{richards1963classification}, any connected surfaces of finite genus are  homeomorphic to the resulting surfaces from closed surfaces by removing closed totally disconnected subsets. 
Therefore the set of end points is closed and so $S$ is open in $S_{\mathrm{end}}$. 
By definition of $T_0$ axiom, the $T_0$ axiom for $S/v$ implies one for the end completion $S_{\mathrm{end}}/v_{\mathrm{end}}$. 
\end{proof}

\subsubsection{Virtually strict limit quasi-circuits and virtually quasi-circuits}
An invariant subset is a virtually limit (resp. strict limit) quasi-circuit if it is the resulting subset from a limit (resp. strict limit) quasi-circuit on $S_{\mathrm{end}}$ with respect to $v_{\mathrm{end}}$ by removing all the ends. 
An invariant subset $\mu$ is a virtual blow-up of a circuit with wandering holonomy if there is a blow-up of a circuit $\gamma$ with wandering holonomy on $S_{\mathrm{end}}$ with respect to $v_{\mathrm{end}}$ such that the resulting subset from $\gamma$ by removing all the ends is $\mu$. 

Corollary~\ref{thm:main03} and Lemma~\ref{lem:blow_down02} imply the following characterization of the denseness of closed orbits
for a flow with finitely many connected components of the singular point set on a surface $S$ with finite genus and finite ends.

\begin{corollary}\label{thm:main}
The following conditions are equivalent for a flow $v$ with finitely many connected components of the  singular point set on a surface $S$ with finite genus and finite ends:
\\
$(1)$ $\overline{\mathop{\mathrm{Cl}}(v)} = \Omega(v)$.
\\
$(2)$
There are neither non-closed recurrent points, virtually strict limit quasi-circuits, nor virtual blow-up of circuits with wandering holonomy.
\\
$(3)$ Every orbit is proper, and there are neither virtually strict limit quasi-circuits nor virtual blow-up of circuits with wandering holonomy.
\\
$(4)$ The orbit space $S/v$ is $T_0$, and there are neither virtually strict limit quasi-circuits nor virtual blow-up of circuits with wandering holonomy.
\end{corollary}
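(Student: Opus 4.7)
The plan is to reduce the corollary to its compact counterpart, Corollary~\ref{thm:main03}, by passing to the end completion $(S_{\mathrm{end}}, v_{\mathrm{end}})$. Because $S$ has finite genus and finite ends, \cite[Theorem~3]{richards1963classification} shows that each connected component of $S$ is a closed surface with finitely many points removed, so $S_{\mathrm{end}}$ is a finite disjoint union of closed surfaces and in particular compact. The end completion adds exactly one new singular point per end, so $v_{\mathrm{end}}$ still has finitely many connected components in its singular set, and Corollary~\ref{thm:main03} applies to $v_{\mathrm{end}}$ on $S_{\mathrm{end}}$.

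Next, I will translate each of the four conditions on $v$ into the corresponding condition on $v_{\mathrm{end}}$. Let $E := S_{\mathrm{end}} - S$ denote the finite set of new end points, which are all singular for $v_{\mathrm{end}}$. Since $S$ is open in $S_{\mathrm{end}}$ and the two flows agree on $S$, a subset of $S$ is a neighborhood of a point of $S$ in $S$ if and only if it is one in $S_{\mathrm{end}}$; hence $\mathop{\mathrm{Cl}}(v_{\mathrm{end}}) = \mathop{\mathrm{Cl}}(v) \sqcup E$, $\Omega(v_{\mathrm{end}}) = \Omega(v) \sqcup E$, and a point of $S$ is a non-closed recurrent point for $v_{\mathrm{end}}$ exactly when it is one for $v$. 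Because $E$ is finite and closed, the closure of $\mathop{\mathrm{Cl}}(v_{\mathrm{end}})$ in $S_{\mathrm{end}}$ equals the closure of $\mathop{\mathrm{Cl}}(v)$ in $S_{\mathrm{end}}$ together with $E$; intersecting the identity $\overline{\mathop{\mathrm{Cl}}(v_{\mathrm{end}})} = \Omega(v_{\mathrm{end}})$ with $S$ therefore reduces it to $\overline{\mathop{\mathrm{Cl}}(v)} = \Omega(v)$, giving the equivalence of condition~(1) for $v$ and for $v_{\mathrm{end}}$. By the very definitions of virtually strict limit quasi-circuit and virtual blow-up of a circuit with wandering holonomy, these objects for $v$ are in bijection with strict limit quasi-circuits and blow-ups of circuits with wandering holonomy for $v_{\mathrm{end}}$, respectively. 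Finally, Lemma~\ref{lem:blow_down02} supplies the equivalence of the properness of every orbit and of the $T_0$-property of the orbit space between $v$ and $v_{\mathrm{end}}$.

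Combining these translations with Corollary~\ref{thm:main03} applied to $v_{\mathrm{end}}$ yields the fourfold equivalence stated in the corollary. The step demanding most care is the bookkeeping for closures and the non-wandering set under the end completion; however, because $S$ is open in the compact surface $S_{\mathrm{end}}$ and $E$ is a finite closed set of new singular points, this reduces to routine neighborhood considerations. No genuinely new dynamical argument is needed, since the substantive content has already been carried out in Corollary~\ref{thm:main03} and Lemma~\ref{lem:blow_down02}.
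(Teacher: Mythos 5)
Your proposal is correct and follows essentially the same route as the paper: the paper derives Corollary~\ref{thm:main} directly from Corollary~\ref{thm:main03} applied to the end completion $(S_{\mathrm{end}}, v_{\mathrm{end}})$ together with Lemma~\ref{lem:blow_down02}, and your argument is exactly this reduction with the (routine, but correctly executed) bookkeeping of $\mathop{\mathrm{Cl}}$, $\Omega$, recurrence, and the definitional correspondence of the ``virtual'' objects made explicit.
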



\section{Characterization of non-closed behaviors}

We have the following characterization of orbits. 

\begin{theorem}\label{main:rec_wandering}
Let $v$ be a flow with finitely many connected components of the singular point set on a surface $S$ with finite genus and finite ends. 
Then one of the following three statements holds for an orbit $O$ exclusively: 
\\
{\rm(1)} The orbit $O$ is recurrent {\rm (i.e.} $O \subseteq \Cv \sqcup \mathrm{R}(v)${\rm)}. 
\\
{\rm(2)} The orbit $O$  is not recurrent but non-wandering and satisfies $\overline{O} - O = \alpha(O) \cup \omega(O) \subseteq \Sv$ and either {\rm(2.1)}, {\rm(2.2)}, {\rm(2.3)}, or {\rm(2.4)}: 
\\
{\rm(2.1)} There is a virtually strict limit quasi-circuit in $\Omega(v)$ containing $O \subset \mathrm{int} \mathrm{P}(v)$. 
\\
{\rm(2.2)} There is the virtual blow-up of a circuit with wandering holonomy containing $O \subset \mathrm{int} \mathrm{P}(v)$. 
\\
{\rm(2.3)} There is the virtual blow-up of a circuit in $\overline{\Pv} - \Pv \subseteq \Omega(v)$ containing $O$. 
\\
{\rm(2.4)} $O \subset \overline{\mathrm{R}(v)} - \mathrm{R}(v)$. 
\\
{\rm(3)} The orbit $O$ is wandering {\rm(i.e.} $O \cap \Omega(v) = \emptyset${\rm)} and $O \subseteq \mathrm{int} \mathrm{P}(v)$. 
\end{theorem}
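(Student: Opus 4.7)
The plan is to reduce the claim to the compact case via end completion followed by blow-down of the connected components of the singular point set, and then invoke Lemma~\ref{lem:wandering_pt}. Set $\tilde{v} := (v_{\mathrm{end}})_{\mathrm{col}}$ on the compact surface $\tilde{S} := (S_{\mathrm{end}})_{\mathrm{col}}$; then $\tilde{v}$ has only finitely many singular points, and Lemmas~\ref{lem:blow_down} and~\ref{lem:blow_down02} furnish a dictionary between properness, recurrence, non-wandering sets, and the circuit/wandering-holonomy concepts for $v$ and $\tilde{v}$, with ``virtual'' and ``blow-up'' prefixes on the $v$-side. Mutual exclusivity of (1), (2), (3) is immediate from the definitions. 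Case (1) is by definition, and in case (3) any wandering orbit carries an open wandering neighborhood whose every orbit is proper and wandering, so $O \subset \mathrm{int}\mathrm{P}(v)$.

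For case (2), suppose $O$ is non-recurrent and non-wandering. Non-recurrent orbits are proper, and a standard argument for flows on surfaces then yields $\overline{O} \setminus O = \alpha(O) \cup \omega(O)$. Split on whether $O \subseteq \overline{\mathrm{R}(v)}$. If yes, then $O \cap \mathrm{R}(v) = \emptyset$ places us in subcase (2.4). Otherwise, $O$ admits a saturated open neighborhood $U$ disjoint from $\mathrm{R}(v)$, so the hypothesis of Lemma~\ref{lem:wandering_pt} is met locally near the image of $O$ in $\tilde{S}$; applying that lemma to a representative point of $O$ yields one of (1.1)--(1.3), which under the dictionary of Lemma~\ref{lem:blow_down} become (2.1), (2.2), and (2.3) respectively. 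The condition $O \subset \mathrm{int}\mathrm{P}(v)$ in (2.1)--(2.2) follows because the containing strict limit quasi-circuit or wandering-holonomy circuit admits a one-sided collar of non-closed proper orbits.

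The main obstacle is the inclusion $\alpha(O) \cup \omega(O) \subseteq \Sv$, particularly in subcase (2.4). The strategy is to invoke the quasi-minimal-set structure for flows with finitely many connected components of the singular point set: for any non-closed recurrent orbit $O'$, the closure $\overline{O'}$ is a quasi-minimal set whose non-recurrent orbits are separatrices connecting singular points. Since $O \subset \overline{\mathrm{R}(v)}$ and $O$ is invariant, we have $O \subseteq \overline{O'}$ for some such $O'$, which together with Lemma~\ref{lem:top23} (separating $\overline{\mathrm{LD}(v)}$ from $\overline{\mathrm{E}(v)}$) immediately yields $\alpha(O) \cup \omega(O) \subseteq \Sv$. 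In subcases (2.1)--(2.3), the same inclusion is structural: the non-singular part of any (virtual blow-up of a) circuit consists of separatrices, whose $\alpha$- and $\omega$-limits are singular by the definition of a non-trivial circuit.
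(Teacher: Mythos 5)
Your overall scaffolding (end completion, blow-down, dictionary lemmas, the split on $O\subseteq\overline{\mathrm{R}(v)}$ giving (2.4) via the finiteness of quasi-minimal sets) matches the paper, but the central step has a genuine gap. Lemma~\ref{lem:wandering_pt} hypothesizes that there are \emph{no} non-closed recurrent points on the whole compact surface; it is not a local statement, and its proof (via Lemma~\ref{lem:circuit} and Lemma~\ref{lem:wandering_h}) uses global ingredients -- the generalized Poincar\'e--Bendixson theorem and constructions of circuits out of limit sets of nearby orbits -- which break down if orbits through a transversal at $x\in O$ accumulate on a quasi-minimal set elsewhere on the surface. Knowing that $O\not\subseteq\overline{\mathrm{R}(v)}$, i.e.\ that the invariant open set $S-\overline{\mathrm{R}(v)}$ contains $O$, does not let you ``apply the lemma locally'': the non-wandering behavior of $O$ may be mediated precisely by orbits that pass close to $O$ and also wind into a distant quasi-minimal set, and in that situation no circuit through $O$ is produced by your argument.

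The paper resolves this by an actual surgery: it chooses a closed transversal through each quasi-minimal set, cuts along them and caps with pairs of sinks and sources, obtaining a flow $w'$ with finitely many singular points and no non-closed recurrent points, to which Lemma~\ref{lem:wandering_pt} legitimately applies. The price is that the surgery could destroy the non-wandering property of $O$, and most of the paper's proof is devoted to showing it does not: assuming $O$ is wandering for $w'$, one extracts sequences of returns through the cut transversal, builds first-return data and (via trivial flow boxes, a limiting continuous return-time function, and the contraction principle, which is excluded by $I\cap\mathop{\mathrm{Per}}(w)=\emptyset$) an invariant open annulus whose boundary component through $x$ is an $\omega$-limit set, contradicting the wandering assumption. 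Your proposal contains no analogue of either the surgery or this verification, so the passage from ``$O$ non-wandering, non-recurrent, off $\overline{\mathrm{R}(v)}$'' to cases (2.1)--(2.3) is unsupported. (Secondary, smaller point: the claim $O\subset\mathrm{int}\,\mathrm{P}(v)$ in (2.1)--(2.2) needs a two-sided argument -- one should use that a full neighborhood of $O$ avoids $\overline{\mathrm{R}(v)}$, $\overline{\mathop{\mathrm{Per}}(v)}$ and $\Sv$ -- a one-sided collar of proper orbits is not enough.)
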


\begin{proof}
Fix an orbit $O$ on $S$. 
If $O$ is recurrent, then the assertion {\rm(1)} holds. 
Thus we may assume that $O$ is not recurrent (i.e. $O \cap \mathrm{R}(v) = \emptyset$). 
If $O$ is wandering, then the assertion {\rm(3)} holds. 
Thus we may assume that $O$ is not-wandering. 

We claim that we may assume that $O \cap \overline{\mathrm{R}(v)} = \emptyset$. 
Indeed, suppose that $O \subset \overline{\mathrm{R}(v)}$.
Then the assertion {\rm(2.4)} holds. 
Since $O$ is non-recurrent and so is non-closed proper, we have $\overline{O} - O = \alpha(O) \cup \omega(O)$ (cf. \cite[Lemma 4.1]{yokoyama2021refine}). 
By the finite existence of quasi-minimal sets of flows on compact surfaces, the surface $S$ has at most finitely many quasi-minimal sets. 
Therefore any orbit in $\overline{R(v)}$ is contained in a quasi-minimal set which is either the $\alpha$-limit set or the $\omega$-limit set of a point. 
\cite[Theorem 3.15]{yokoyama2021poincare} implies that $\alpha(O) \cup \omega(O) \subseteq \Sv$. 

Therefore it suffices to show the existence of a virtually strict limit quasi-circuit in $\Omega(v)$ containing $O$ or the virtual blow-up of a circuit which is contained in $\overline{\Pv} - \Pv \subseteq \Omega(v)$ and contains $O$. 
Taking the end completion $S_{\mathrm{end}}$ of $S$, let $v_{\mathrm{end}}$ be the resulting flow on the $S_{\mathrm{end}}$. 
By definitions of virtually strict limit quasi-circuit and the virtual blow-up of a circuit, it suffices to show the existence of a strict limit quasi-circuit in $\Omega(v_{\mathrm{end}})$ containing $O$ or the blow-up of a circuit which is contained in $\overline{\mathop{\mathrm{Per}}(v)} - \mathop{\mathrm{Per}}(v)_{\mathrm{end}} \subseteq \Omega(v_{\mathrm{end}})$ and contains $O$. 
Blow-downing $S_{\mathrm{end}}$, let $w$ be the resulting flow on the resulting surface $T$. 
By definitions of strict limit quasi-circuit and the blow-up of a circuit, it suffices to show the existence of a strict limit circuit in $\Omega(w)$ containing $O$ or a circuit which is contained in $\overline{\mathop{\mathrm{Per}}(w)} - \mathop{\mathrm{Per}}(w) \subseteq \Omega(w)$ and contains $O$. 
Then the singular point set $\mathop{\mathrm{Per}}(w)$ consists of finitely many points and the surface $T$ is compact. 
Moreover, we have $O \cap \overline{\mathrm{R}(w)} = \emptyset$. 
For any quasi-minimal set, there is a closed transversal intersecting it. 
Cutting such closed transversals and adding pairs of sinks and sources, the resulting surface $T'$ is a compact surface and the resulting flow $w'$ has at most finitely many singular points. 
Then $O$ is non-recurrent with respect to $w'$. 

We claim that we may assume that $O$ is non-wandering with respect to $w'$. 
Indeed, assume that $O$ is wandering with respect to $w'$. 
Since $O$ is wandering with respect to $w'$, there there are a point $x \in O$, a closed transversal $T$ intersecting a quasi-minimal set $\mathcal{M}$, a closed transverse arc $I$ with $x \in \partial I$ and $I \cap \mathop{\mathrm{Per}}(w) = \emptyset$, sequences $(x_n)_{n \in \Z_{>0}}$ and $(z_n)_{n \in \Z_{>0}}$ in $I$ monotonically converging to $x$, and a sequence $(y_n)_{n \in \Z_{>0}}$ on $T$ such that either $x_n \in O^+(y_n)$ and $z_n \in O^-(y_n)$ or $x_n \in O^-(y_n)$ and $z_n \in O^+(y_n)$. 
Fix an order $\leq$ on $I$ such that $x$ is the maximal element. 
Then $x_n < x_{n+1}$ and $z_n < z_{n+1}$ for any $n \in \Z_{>0}$. 
By time reversion if necessary, we may assume that $x_n \in O^+(y_n)$, $z_n \in O^-(y_n)$ and $z_1 < x_1$. 
Replacing $I$ with a subinterval, we may assume that $z_1 \in \partial I$. 
From $I \cap \mathop{\mathrm{Per}}(w) = \emptyset$, we obtain $z_1 < f_v(z_1)$. 
Let $f_v$ be the first return map on $I$ induced by $v$, $C_{a,b} \subset O^+(x)$ the orbit arc from $a$ to $b$, and $I_{a,b} \subset I$ the subinterval from $a$ to $b$ of $I$. 
%
For any $n \in \Z_{>0}$, the the union $\gamma_n : = C_{z_n, f_v(z_n)} \cup I_{z_n, z_{n+1}} \cup C_{z_{n+1}, f_v(z_{n+1})} \cup I_{f_v(z_{n}), f_v(z_{n+1})}$ is the immersed image of a circle and denote by $D_n$ the connected component of $S - \gamma_n$ containing the interior of $C_{z, f_v(z)}$ for any point $z \in I_{z_n, z_{n+1}} - \partial I_{z_n, z_{n+1}}$. 
By finite existence of genus and singular points, by renumbering of $n$, we may assume that $f_v$ is orientation-preserving and that any connected component $D_n$ is an open disk for any $n \in \Z_{>0}$. 
Then there is a continuous function $t_1 \colon I_{z_1, z_2} \to \R$ such that $f_v(z) = v(t_1(z),z) \in f_v(I_{z_1, z_2})$. 
For any $n \in \Z_{>0}$, by the existence of trivial flow boxes of $C_{z_{n}, f_v(z_{n})}$, since $D_n$ is a trivial flow box, there is a continuous function $t_n \colon I_{z_1, z_n} \to \R$ such that $f_v(z) = v(t_n(z),z) \in f_v(I_{z_1, z_n})$. 
Since $I = I_{z_1, x}$, there is a continuous function $t_\infty \colon I \to \R$ such that $f_v(z) = v(t_\infty(z),z) \in \bigcup_{n \in \Z_{>0}} f_v(I_{z_1, z_n}) \subseteq I$. 
If $\overline{t_\infty(I)} \subset \mathrm{int} I$, then the contraction principle implies the existence of a fixed point of $f_v$ and so that there is a periodic point intersecting $I$, which contradicts $I \cap \mathop{\mathrm{Per}}(w) = \emptyset$. 
Thus $x \in \overline{t_\infty(I)}$.  
Therefore the union $A := \bigcup_{n \in \Z_{>0}} D_n \cup C_{z_{n}, f_v(z_{n+1})} \cup \mathrm{int} \, t_\infty(I)$ is an open annulus. 
Let $\partial$ be the boundary component of $A$ containing $x$. 
For any $z \in A$, we obtain $\omega(z) = \partial$. 
This means that $x$ is non-wandering, which contradicts that $x$ is wandering.

Applying Lemma~\ref{lem:wandering_pt}, we have one of the following statements: {\rm(1)} There is a strict limit circuit in $\Omega(w')$ containing $O$; {\rm(2)} There is a circuit with wandering holonomy containing $O \subseteq \Omega(w')$; {\rm(3)} There is a circuit in $\overline{\mathop{\mathrm{Per}}(w')} - \mathop{\mathrm{Per}}(w') \subseteq \Omega(w')$ containing $x$. 
If the assertions {\rm(1)} (resp. {\rm(2)}, {\rm(3)}) holds, then there is a strict limit circuit (resp. a circuit with wandering holonomy, a circuit in $\overline{\mathop{\mathrm{Per}}(w)} - \mathop{\mathrm{Per}}(w) \subseteq \Omega(w)$) in $\Omega(w)$ containing $O$. 
\end{proof}

Theorem~\ref{main:rec_wandering_pt} follows from Theorem~\ref{main:rec_wandering} via the end completion. 
We have the following statement, which is a generalization of the Poincar\'e recurrence theorem for flows on surfaces.

\begin{theorem}\label{thm:rec_th}
A flow with finitely many connected components of the singular point set on a surface with finite genus and finite ends has neither virtually strict limit quasi-circuits nor the virtual blow-ups of circuits with wandering holonomy if and only if the set of recurrent points is dense in the non-wandering set.
In any case, the set of wandering points corresponds to the interior of the set of non-recurrent points. 
\end{theorem}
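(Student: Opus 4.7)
The plan is to derive Theorem~\ref{thm:rec_th} as an immediate corollary of the four-case classification of orbits provided by Theorem~\ref{main:rec_wandering}: the set of recurrent points equals $\Cv \sqcup \mathrm{R}(v)$, and both halves of the equivalence, as well as the auxiliary statement on wandering points, can be read off case by case.

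For the direction ``no forbidden objects $\Rightarrow$ density'', fix $x \in \Omega(v)$ and apply Theorem~\ref{main:rec_wandering} to $O(x)$. If $O(x)$ is recurrent there is nothing to prove; otherwise $O(x)$ lies in case (2), and the hypothesis eliminates the sub-cases (2.1) and (2.2), leaving (2.3) with $O(x) \subseteq \overline{\Pv}$ or (2.4) with $O(x) \subseteq \overline{\mathrm{R}(v)}$. In either case $x \in \overline{\Cv \sqcup \mathrm{R}(v)}$, while the reverse inclusion is immediate from closedness of $\Omega(v)$. For the contrapositive of the converse, I will exhibit from any forbidden object a witness orbit $O \subseteq \Omega(v) \cap \mathrm{int}\, \mathrm{P}(v)$; such an $O$ breaks density because $\mathrm{int}\, \mathrm{P}(v)$ is an open neighborhood of $O$ disjoint from $\Cv \sqcup \mathrm{R}(v)$. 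If $\gamma$ is a virtually strict limit quasi-circuit, then in the one-sided case the collar basin is already such a neighborhood for any non-recurrent separatrix of $\gamma$; in the two-sided case the defining transverse arc $T = T_1 \sqcup \{\ast\} \sqcup T_2$ through the separatrix $\mu \subseteq \gamma$ has both halves consisting entirely of non-recurrent orbits (attracting/repelling behavior on $T_1$ forbids fixed points there, while $T_2$ has no fixed points by hypothesis), so the saturation of $T$ furnishes a neighborhood of $\mu$ contained in $\mathrm{P}(v)$. For a virtual blow-up of a circuit with wandering holonomy, Lemma~\ref{lem:wandering_hol} already supplies a non-wandering, non-closed point on the circuit, and the vacuous intersection of the domain and codomain of the return map on an arbitrarily small transverse arc through it additionally rules out non-closed recurrent orbits nearby, placing the point in $\mathrm{int}\, \mathrm{P}(v)$.

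The auxiliary statement is then read off the same classification under the equivalent conditions: the inclusion $S \setminus \Omega(v) \subseteq \mathrm{int}\, \mathrm{P}(v)$ is exactly case (3) of Theorem~\ref{main:rec_wandering}, and conversely a point $x \in \mathrm{int}\, \mathrm{P}(v)$ that is non-wandering would fall into case (2), with (2.1), (2.2) excluded by hypothesis and (2.3), (2.4) placing $x$ in $\overline{\Cv \sqcup \mathrm{R}(v)}$, contradicting the disjointness of $\mathrm{int}\, \mathrm{P}(v)$ from the recurrent set. The main obstacle is the sufficiency step: verifying that the strictness conditions and the empty-intersection condition are rigid enough to force the witness orbit into $\mathrm{int}\, \mathrm{P}(v)$, and that this feature is preserved through the end completion and blow-down used to define virtualized objects. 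Both reductions have already been arranged by Lemmas~\ref{lem:blow_down} and \ref{lem:blow_down02}, so the geometric argument reduces to the compact, finite-singular-point case that was treated in Lemma~\ref{lem:wandering_pt}.
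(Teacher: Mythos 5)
Your forward direction and the reading-off of the auxiliary statement are essentially the paper's argument: the paper also deduces both from the orbit classification of Theorem~\ref{main:rec_wandering}, identifying the wandering set with $\mathrm{int}\,\mathrm{P}(v)$ and hence $\Omega(v)$ with $\overline{\Cv \sqcup \mathrm{R}(v)}$. Where you genuinely depart is the converse: the paper disposes of it by citing Theorem~\ref{main:rec_wandering} again (the needed fact that an orbit of type (2.1) or (2.2) lies in $\mathrm{int}\,\mathrm{P}(v)$ is packaged into that theorem, whose proof reduces to Lemma~\ref{lem:wandering_pt} after the recurrent orbits have been removed by cutting closed transversals through quasi-minimal sets), whereas you try to manufacture a witness orbit in $\Omega(v) \cap \mathrm{int}\,\mathrm{P}(v)$ directly from the forbidden object. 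That is a reasonable instinct, but as written it has a gap.

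The gap is in the two-sided case of a (virtually) strict limit quasi-circuit. To conclude that the saturation of $T$ is a neighborhood of $\mu$ contained in $\mathrm{P}(v)$, you must exclude not only periodic orbits but also non-closed recurrent orbits meeting $T$ arbitrarily close to $\mu$; the absence of fixed points of the return map on $T_2$ rules out the former, not the latter. If a quasi-minimal set accumulated on $\mu$ from the $T_2$ side, then $\mu \subset \overline{\mathrm{R}(v)} \subseteq \overline{\Cv \sqcup \mathrm{R}(v)}$ and your witness does not break density, so you need an argument (absent from the proposal, and supplied in the paper only implicitly through the reduction inside Theorem~\ref{main:rec_wandering}) that this configuration cannot occur, or an alternative witness. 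Two smaller points in the same direction: in the one-sided case the set $V \sqcup \gamma$ may contain singular points of $\gamma$, so you must intersect with a flow box around a non-singular point of the separatrix to land inside $\mathrm{P}(v)$ rather than $\mathrm{P}(v) \cup \Sv$; and the assertion that collar-basin points (and points of $T_1$ attracted to $\mu$) are non-recurrent needs justification, namely the fact, used but not proved in your sketch, that a point whose $\omega$- or $\alpha$-limit set is a limit (quasi-)circuit is wandering. Your treatment of the wandering-holonomy case is fine: the empty intersection of domain and codomain of the return map does exclude both closed and non-closed recurrent orbits through the transverse arc, so that witness does lie in $\mathrm{int}\,\mathrm{P}(v)$.
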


\begin{proof}
Let $v$ be a flow with finitely many connected components of the singular point set on a surface $S$ with finite genus and finite ends. 
Notice that $S = \Cv \sqcup \mathrm{P}(v) \sqcup \mathrm{R}(v)$. 
Suppose that there are neither virtually strict limit quasi-circuits nor the virtual blow-ups of circuits with wandering holonomy. 
Theorem~\ref{main:rec_wandering} implies that the set of wandering points corresponds to $\Sv \cup \overline{\Pv} \cup \overline{\mathrm{R}(v)} = S - \overline{\Cv \sqcup \mathrm{R}(v)}= \mathrm{int} \mathrm{P}(v)$ because of the closedness of $\Sv$. 
Therefore the non-wandering set $\Omega(v) = S - \mathrm{int} \mathrm{P}(v) = \overline{\Cv \sqcup \mathrm{R}(v)}$. 
This means that the closure of the set of non-recurrent points corresponds to the non-wandering set.

Conversely, suppose that $\Omega(v) = \overline{\Cv \sqcup \mathrm{R}(v)}$. 
From Theorem~\ref{main:rec_wandering}, there are neither virtually strict limit quasi-circuits nor the virtual blow-ups of circuits with wandering holonomy. 
\end{proof}

Theorem~\ref{main_cor:rec_th} follows from Theorem~\ref{thm:rec_th}. 
The characterization of minimality for flows on connected surfaces follows from  Theorem~\ref{main:rec_wandering_pt} as follows. 


\begin{proof}[Proof of Theorem~\ref{main:minimal}]
Let $v$ be a flow on a connected surface with finite genus and finite ends. 
Suppose that $v$ is minimal. 
Then $v$ is non-wandering and $S = \mathrm{LD}(v)$. 
Therefore $\Sv = \emptyset$. 
Fix any orbit $O$. 
From \cite[Theorem~VI]{cherry1937topological}, the closure $\overline{O}$ contains uncountably many non-closed recurrent orbits and so $\emptyset \neq \overline{O} - O = S -  O = \mathrm{LD}(v) - O \not\subseteq \Sv$. 

Conversely, suppose that $v$ is non-wandering and the set difference $\overline{O} - O$ for any orbit $O$ is not contained in the singular point set.
Since the set difference $\overline{O} - O$ for any closed orbit $O$ is empty, there are no closed orbits. 
In particular, the singular point set is empty. 
From Theorem~\ref{main:rec_wandering_pt}, any orbits are non-closed recurrent. 
Lemma~\ref{lem:top23} implies that $\mathrm{E}(v) = \emptyset$ and so that $S = \mathrm{LD}(v)$. 
The local density implies that the closure $\overline{O(x)}$ for any point $x \in \mathrm{LD}(v) = S$ is a \nbd of $x$. 
Fix an orbit $O$. 
By \cite[Proposition~2.2]{yokoyama2016topological}, the union $\{ x \in S \mid \overline{O(x)} = \overline{O} \} = \overline{O}$ is a minimal set. 
Therefore the closure $\overline{O}$ is a \nbd of every point of $\overline{O}$ and so is open. 
Since any nonempty open and closed subset of the connected surface $S$ is the whole surface $S$, we obtain $\overline{O} = S$. 
\end{proof}

\section{Characterization of finiteness of the non-wandering set and correspondence to the closed point set}
 
We state the following statement. 

\begin{lemma}\label{lem:circuit_w_wh_f}
Let $v$ be a flow with finitely many singular points on a compact surface. 
If each recurrent orbit is a closed orbit, then any circuits with wandering holonomy contain at most finitely many non-wandering orbits. 
\end{lemma}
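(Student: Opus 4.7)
The plan is to argue by contradiction. Suppose a circuit $\gamma$ with wandering holonomy contains infinitely many non-wandering orbits. Because $\Sv$ is finite, all but finitely many of these orbits are non-singular; the hypothesis that each recurrent orbit is closed then forces each such non-singular non-wandering orbit to be non-closed and proper, that is, a non-wandering separatrix. Hence I obtain an infinite sequence $\{O_n\}_{n\in\Z_{>0}}$ of pairwise distinct non-wandering separatrices contained in $\gamma$, and by compactness of $\gamma$ some subsequence accumulates on a point $y\in\gamma$.

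Next I would combine the non-wandering property of each $O_n$ with the waterfall construction of Lemma~\ref{lem:lc_transersal} to produce, for each $n$ large enough, a closed transversal $\tau_n$ cutting $O_n$. The first return map near an arbitrary non-singular point of $\gamma$ is not a priori orientation-preserving, but away from the single base point $x_0\in\gamma$ witnessing the wandering holonomy one can pass to the second iterate of the return map, yielding an orientation-preserving restriction on which the waterfall construction applies. Since $v$ has only finitely many singular points and (by Poincar\'e--Hopf) any contractible closed transversal bounds a disk containing at least one singular point, at most finitely many of the $\tau_n$ can be contractible, so infinitely many are non-contractible and essential in $S$.

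The final step is topological, in the spirit of the closing argument of Lemma~\ref{lem:wandering_h}. I would consider the connected component $U_y$ of $S\setminus\bigsqcup_n \tau_n$ containing $y$. Being an open subset of the compact finite-genus surface $S$ whose closure has finite Euler characteristic, $U_y$ has only finitely many boundary components. However, each $\tau_n$ is produced from orbits returning into an arbitrarily small neighborhood of $O_n$, and these neighborhoods shrink onto the accumulation point $y$, so infinitely many of the $\tau_n$ must appear as boundary components of $U_y$. This is a contradiction, completing the proof.

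The hard part is justifying that the $\tau_n$ can be chosen so that infinitely many of them are indeed boundary components of $U_y$ rather than cobounding annuli with one another: the waterfall construction produces $\tau_n$ inside a collar of $\gamma$, and one must argue that distinct $O_n$ yield transversals in genuinely different ``channels.'' This is exactly where the definition of wandering holonomy is used, via the disjointness of the domain and codomain $D\cap f_I(D)=\emptyset$ of the first return map near $x_0$; this disjointness prevents the kind of uniform spiralling present near a strict limit circuit, which is why the analogous finiteness statement fails for strict limit circuits (cf.\ Figure~\ref{NAC02}).
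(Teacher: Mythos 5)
Your reduction to infinitely many pairwise distinct non-wandering separatrices $O_n\subset\gamma$ accumulating at a point $y$ is fine, but the rest of the argument has a genuine gap, and it is exactly the one you flag yourself. First, the production of the closed transversals $\tau_n$ is not justified: non-wandering only gives you \emph{nearby} orbits returning to a transverse arc through $O_n$, and the first return there may be orientation-reversing; your proposed fix of passing to the second iterate fails precisely where the wandering holonomy lives, since disjointness of domain and codomain makes the second iterate empty, and at points with M\"obius-type holonomy the waterfall construction does not directly yield a two-sided closed transversal. Second, and more seriously, the concluding topological step does not close: a compact surface contains infinitely many pairwise disjoint essential simple closed curves (e.g.\ parallel ones in an annulus), and then every complementary component has at most two boundary circles, so ``$U_y$ has finitely many boundary components'' produces no contradiction unless you prove the $\tau_n$ are pairwise non-parallel, i.e.\ lie in genuinely different channels. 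You acknowledge this and appeal to the disjointness $D\cap f_I(D)=\emptyset$ as ``preventing uniform spiralling,'' but that is a heuristic, not an argument; in the paper's Lemma~\ref{lem:wandering_h} the analogous step is closed dynamically (invariance of $U_x$ together with $O^+(a_n)$ crossing $\gamma_n$ forces $a_n\notin U_x$), and you have not set up any such mechanism here.

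For comparison, the paper's proof of this lemma avoids closed transversals entirely. Using the hypothesis $\mathrm{R}(v)=\emptyset$, the fact that points whose limit set is a limit circuit are wandering, and the generalized Poincar\'e--Bendixson theorem, it shows that the $\alpha$- and $\omega$-limit sets of the non-wandering non-recurrent orbits $O_n$ are singular points; by finiteness of $\Sv$ one passes to a subsequence with a common pair $\alpha,\omega$, and by finiteness of genus the regions $D_n$ between consecutive orbits may be taken to be invariant open disks consisting of non-recurrent points. An orbit $O_{n+1}$ sandwiched between two such disks then has trivial two-sided holonomy, contradicting the wandering (non-orientable) holonomy of $\gamma$. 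If you want to salvage your route, you would need to supply the dynamical crossing argument that forces infinitely many $\tau_n$ onto the boundary of $U_y$; otherwise the paper's sandwiching argument is the more direct way to use the hypotheses.
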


\begin{proof}
Let $v$ be a flow with finitely many singular points on a compact surface with $\mathrm{R}(v) = \emptyset$. 
Since any point whose $\omega$-limit or $\alpha$-limit set is a limit circuit is wandering, the generalization of the Poincar\'e-Bendixson theorem for a flow with finitely many singular points implies that the $\omega$-limit and $\alpha$-limit sets of any non-recurrent non-wandering points are singular points. 
Assume that there is a circuit $\gamma$ with wandering holonomy that contains infinitely many non-wandering orbits $O_n$ ($n \in \Z_{> 0}$).
Then the $\omega$-limit and $\alpha$-limit sets of $O_n$ are singular points for any $n \in \Z_{> 0}$. 
By the finite existence of singular points, there is a singular point $\alpha$ (resp. $\omega$) which is the $\omega$-limit (resp. $\alpha$-limit) set of $O_n$ for infinitely many $n \in \Z_{> 0}$. 
Taking a subsequence of $(O_n)_{n \in \Z_{> 0}}$, we may assume that $\omega(O_n) = \omega$ and $ \alpha(O_n) = \alpha$ for any $n \in \Z_{> 0}$. 
Denote by $D_n$ the connected component of $S - (\{ \alpha, \omega \} \sqcup \bigsqcup_{n \in \Z_{>0}} O_n)$ with $\partial D_n = \{ \alpha, \omega \} \sqcup O_n \sqcup O_{n+1}$ for any $n \in \Z_{> 0}$. 
By the finite existence of genus and singular points, taking a subsequence of $(O_n)_{n \in \Z_{> 0}}$, we may assume that any $D_n$ is an invariant open disk. 
Then any $D_i$ intersects no periodic points and so consists of non-recurrent points.
This implies that any $O_{n+1}$ for any $n \in \Z_{> 0}$ has no non-orientable holonomy and so is not contained in a circuit with wandering holonomy, which contradicts the definition of $O_{n+1}$. 
\end{proof}

We demonstrate Corollary~\ref{main:c} as follows.

\begin{proof}[Proof of Corollary~\ref{main:c}]


In any case, there are at most finitely many closed orbits and any circuits with wandering holonomy contain at most finitely many non-wandering orbits because of Lemma~\ref{lem:circuit_w_wh_f}. 
Obviously, the assertion {\rm(2)} implies the assertion {\rm(3)}. 
By the compactness of $S$, since a circuit with wandering holonomy has a small neighborhood which is a M\"obius band, there are at most finitely many circuits with wandering holonomy. 

Suppose that $\Omega(v)$ consists of finitely many orbits.
By \cite[Lemma 3.1]{yokoyama2019properness}, since $\mathrm{R}(v) \subseteq \Omega(v)$, the finiteness of $\Omega(v)$ implies that $\mathrm{R}(v) = \emptyset$ and that $\Cv$ consists of finitely many orbits. 
Lemma~\ref{lem:wandering_pt} implies that any non-recurrent point in $\Omega(v)$ is contained in either a strict limit circuit or a circuit with wandering holonomy. 
This means that $\Omega(v)$ is the finite union of closed orbits, a strict limit circuit, and a circuit with wandering holonomy, and that any non-periodic limit circuits are strict limit circuits. 
Therefore the assertion {\rm(2)} holds. 

Suppose that $\mathrm{R}(v) = \emptyset$, any limit circuits consist of at most finitely many orbits, and there are at most finitely many closed orbits and strict limit circuits.
Lemma~\ref{lem:wandering_pt} implies that any non-recurrent point in $\Omega(v)$ is contained in either a limit circuit or a circuit with wandering holonomy. 
This implies that $\Omega(v)$ consists closed orbits, a strict limit circuit, and a circuit with wandering holonomy, and that any non-periodic limit circuits are strict limit circuits. 
Since any limit circuits consist of at most finitely many orbits, the finite existence of closed orbits, strict limit circuits, and circuits with wandering holonomy implies that $\Omega(v)$ consists of finitely many orbits. 
\end{proof}

\subsection{Correspondence between non-wandering properties and closedness}
We have the following statements. 

\begin{lemma}\label{lem:correspond_pn}
Let $v$ be a flow with finitely many connected components of the singular point set  on a compact surface $S$.
Suppose that $\mathop{\mathrm{Cl}}(v)$ is closed and there are neither non-closed recurrent points,  limit quasi-circuits, nor blow-ups of circuits with wandering holonomy. 
Then $\mathop{\mathrm{Cl}}(v) = \Omega(v)$. 
\end{lemma}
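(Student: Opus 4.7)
The plan is to establish the equality $\mathop{\mathrm{Cl}}(v) = \Omega(v)$ by proving both inclusions, with the nontrivial direction being $\Omega(v) \subseteq \mathop{\mathrm{Cl}}(v)$, which I would attack by contradiction via the orbit classification in Theorem~\ref{main:rec_wandering_pt}.

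The inclusion $\mathop{\mathrm{Cl}}(v) \subseteq \Omega(v)$ is immediate, since every singular or periodic orbit is non-wandering. For the reverse inclusion, I would assume toward a contradiction that there is a point $x \in \Omega(v) \setminus \mathop{\mathrm{Cl}}(v)$, so that $O := O(x)$ is non-closed. By the hypothesis $\mathrm{R}(v) = \emptyset$ (no non-closed recurrent points), the orbit $O$ is non-recurrent. Applying Theorem~\ref{main:rec_wandering_pt} to $O$, and using that $O$ is both non-recurrent and non-wandering, the only possibility is that exactly one of the sub-cases {\rm(2.1)}--{\rm(2.4)} holds.

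Next I would eliminate each sub-case. Sub-case {\rm(2.1)} is excluded since the absence of limit quasi-circuits implies, a fortiori, the absence of strict limit quasi-circuits. Sub-case {\rm(2.2)} is excluded directly by the hypothesis that there are no blow-ups of circuits with wandering holonomy. Sub-case {\rm(2.4)} is vacuous because $\mathrm{R}(v) = \emptyset$ yields $\overline{\mathrm{R}(v)} - \mathrm{R}(v) = \emptyset$. The key case is {\rm(2.3)}: here the closedness of $\mathop{\mathrm{Cl}}(v)$ gives $\overline{\mathop{\mathrm{Per}}(v)} \subseteq \overline{\mathop{\mathrm{Cl}}(v)} = \mathop{\mathrm{Cl}}(v) = \mathop{\mathrm{Sing}}(v) \sqcup \mathop{\mathrm{Per}}(v)$, whence $\overline{\mathop{\mathrm{Per}}(v)} - \mathop{\mathrm{Per}}(v) \subseteq \mathop{\mathrm{Sing}}(v)$; thus any blow-up of a circuit contained in $\overline{\mathop{\mathrm{Per}}(v)} - \mathop{\mathrm{Per}}(v)$ lies inside $\mathop{\mathrm{Sing}}(v)$, contradicting the fact that $O \subseteq$ (such a blow-up) is non-singular.

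Having ruled out every sub-case, the assumption $x \in \Omega(v) \setminus \mathop{\mathrm{Cl}}(v)$ is untenable, yielding $\Omega(v) \subseteq \mathop{\mathrm{Cl}}(v)$ and completing the proof. I do not expect any serious technical obstacle; the one place that requires a touch of care is sub-case {\rm(2.3)}, where one must invoke the definition of the blow-up as a subset of $S$ to ensure that its containment in $\overline{\mathop{\mathrm{Per}}(v)} - \mathop{\mathrm{Per}}(v)$, combined with closedness of $\mathop{\mathrm{Cl}}(v)$, forces it into the singular point set of the original flow $v$.
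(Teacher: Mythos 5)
Your proof is correct, but it takes a different route from the paper's. The paper disposes of this lemma in two lines: the hypotheses (no non-closed recurrent points, no limit quasi-circuits --- hence in particular no strict limit quasi-circuits --- and no blow-ups of circuits with wandering holonomy) are exactly condition $(2)$ of Corollary~\ref{thm:main03}, which gives $\overline{\mathop{\mathrm{Cl}}(v)} = \Omega(v)$, and then the closedness of $\mathop{\mathrm{Cl}}(v)$ is used only once, at the very end, to upgrade this to $\mathop{\mathrm{Cl}}(v) = \overline{\mathop{\mathrm{Cl}}(v)} = \Omega(v)$. You instead argue orbitwise through the classification of Theorem~\ref{main:rec_wandering_pt}, assuming a point of $\Omega(v) \setminus \mathop{\mathrm{Cl}}(v)$ and eliminating cases: $(1)$ and $(2.4)$ by $\mathrm{R}(v) = \emptyset$, $(2.1)$ and $(2.2)$ by the non-existence hypotheses, and $(2.3)$ by using closedness of $\mathop{\mathrm{Cl}}(v)$ to force $\overline{\mathop{\mathrm{Per}}(v)} - \mathop{\mathrm{Per}}(v) \subseteq \mathop{\mathrm{Sing}}(v)$, contradicting that the non-closed orbit $O$ lies in the circuit's blow-up. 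Both arguments rest on the same underlying machinery (Lemma~\ref{lem:wandering_pt} plus the blow-down constructions), but yours trades the paper's brevity for transparency about where each hypothesis acts, and in particular it uses the closedness of $\mathop{\mathrm{Cl}}(v)$ inside the case analysis rather than as a final upgrade. Two small remarks: your step ``no limit quasi-circuits implies no strict limit quasi-circuits'' is the same implicit containment the paper relies on (strict limit quasi-circuits are the limit quasi-circuits whose blow-downs are strict limit circuits, cf. Lemma~\ref{lem:blow_down}), so it is harmless; and Theorem~\ref{main:rec_wandering_pt} only asserts that at least one of $(2.1)$--$(2.4)$ holds, not exactly one, which does not affect your elimination argument.
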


\begin{proof}
Since $\mathop{\mathrm{Cl}}(v)$ is closed, the blow-downs of any strict limit quasi-circuits are  
Corollary~\ref{thm:main03} implies $\overline{\mathop{\mathrm{Cl}}(v)} = \Omega(v)$. 
Since $\mathop{\mathrm{Cl}}(v)$ is closed, we obtain $\mathop{\mathrm{Cl}}(v) =\overline{\mathop{\mathrm{Cl}}(v)} =  \Omega(v)$. 
\end{proof}

Since $S - \Cv = \mathrm{P}(v)$ for any flow $v$ without non-closed recurrent orbits on a surface, Proposition~\ref{prop:necessary_dense02} and Lemma~\ref{lem:correspond_pn} imply Corollary~\ref{main_cor:a}. 
Corollary~\ref{main_cor:a}, Lemma~\ref{lem:blow_down02}, and Corollary~\ref{thm:main} imply the following correspondence. 

\begin{corollary}\label{thm:main02}
The following conditions are equivalent for a flow $v$ with finitely many connected components of the  singular point set on a surface $S$ with finite genus and finite ends:
\\
$(1)$ $\mathop{\mathrm{Cl}}(v) = \Omega(v)$.
\\
$(2)$ The closed point set $\mathop{\mathrm{Cl}}(v)$ is closed, and there are neither non-closed recurrent points, non-periodic virtually limit quasi-circuits, nor virtual blow-up of circuits with wandering holonomy.
\\
$(3)$ The closed point set $\mathop{\mathrm{Cl}}(v)$ is closed, every orbit is proper, and there are neither non-periodic virtually limit quasi-circuits nor virtual blow-up of circuits with wandering holonomy.
\\
$(4)$ The closed point set $\mathop{\mathrm{Cl}}(v)$ is closed, and the orbit space $S/v$ is $T_0$ and there are neither non-periodic virtually limit quasi-circuits nor virtual blow-up of circuits with wandering holonomy.

In any case, the set $\mathrm{P}(v)$ of non-recurrent points are open. 
\end{corollary}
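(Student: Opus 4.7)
The plan is to deduce this directly from Corollary~\ref{thm:main} combined with Lemma~\ref{lem:blow_down02}, exploiting the elementary observation that the equality $\mathop{\mathrm{Cl}}(v) = \Omega(v)$ is equivalent to the conjunction of the density $\overline{\mathop{\mathrm{Cl}}(v)} = \Omega(v)$ together with the closedness of $\mathop{\mathrm{Cl}}(v)$. The closedness hypothesis built into each of (2), (3), (4) is exactly what upgrades the density characterization of Corollary~\ref{thm:main} to an equality. The stricter ``non-periodic'' limit quasi-circuit condition (versus ``strict limit'' in Corollary~\ref{thm:main}) is necessary because the equality forbids \emph{any} non-closed non-wandering orbits, not merely the ones produced by strict limit circuits.

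For the implication $(1) \Rightarrow (2)$, the equality $\mathop{\mathrm{Cl}}(v) = \Omega(v)$ makes $\mathop{\mathrm{Cl}}(v)$ closed (since $\Omega(v)$ is always closed) and forces $\mathrm{R}(v) = \emptyset$ (because $\mathrm{R}(v) \subseteq \Omega(v)$ is disjoint from $\mathop{\mathrm{Cl}}(v)$). Any non-periodic virtually limit quasi-circuit contains, by its defining structure on the end-compactification, non-recurrent non-singular points which lie in some $\omega$- or $\alpha$-limit set, hence are non-closed and non-wandering on $S$; this would supply a point of $\Omega(v) \setminus \mathop{\mathrm{Cl}}(v)$, contradicting $(1)$. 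Virtual blow-ups of circuits with wandering holonomy are excluded in the same manner via Lemma~\ref{lem:wandering_hol}, which places points of $\Omega - \overline{\mathop{\mathrm{Cl}}}$ inside any such circuit on the compactified/blown-down surface. Conversely, since strict limit quasi-circuits form a subfamily of non-periodic limit quasi-circuits, condition (2) implies the hypotheses of Corollary~\ref{thm:main}, yielding $\overline{\mathop{\mathrm{Cl}}(v)} = \Omega(v)$; combined with the explicit closedness of $\mathop{\mathrm{Cl}}(v)$, this gives $\mathop{\mathrm{Cl}}(v) = \Omega(v)$.

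The equivalence $(2) \Leftrightarrow (3)$ is immediate from the identity $\mathrm{R}(v) = \mathrm{LD}(v) \sqcup \mathrm{E}(v)$ recalled in the preliminaries: the set of non-closed recurrent orbits coincides with the set of non-proper orbits, while closed orbits are automatically proper, so ``no non-closed recurrent points'' and ``every orbit is proper'' name the same condition. The equivalence $(3) \Leftrightarrow (4)$ is a direct application of Lemma~\ref{lem:blow_down02}(2). Finally, the ``in any case'' assertion is immediate: from $\mathop{\mathrm{Cl}}(v) = \Omega(v)$ we have $\mathrm{R}(v) = \emptyset$, hence $S = \mathop{\mathrm{Cl}}(v) \sqcup \mathrm{P}(v) = \Omega(v) \sqcup \mathrm{P}(v)$, so $\mathrm{P}(v) = S \setminus \Omega(v)$ is open as the complement of a closed set. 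The only mildly delicate step is the bookkeeping required to verify that the non-closed non-wandering orbits inside a non-periodic virtually limit quasi-circuit, or inside a virtual blow-up of a wandering-holonomy circuit, genuinely pull back to points of $\Omega(v) \setminus \mathop{\mathrm{Cl}}(v)$ on $S$; this holds because the end-compactification and the blow-down only introduce or collapse singular points, so non-singular non-closed dynamics are preserved at every stage.
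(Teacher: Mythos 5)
Your proposal is correct and follows essentially the same route as the paper: the paper derives Corollary~\ref{thm:main02} from Corollary~\ref{main_cor:a}, Lemma~\ref{lem:blow_down02}, and Corollary~\ref{thm:main}, and your argument is exactly this reduction with the role of Corollary~\ref{main_cor:a} unpacked into its ingredients (limit sets are non-wandering, Lemma~\ref{lem:wandering_hol}, and ``closedness of $\mathop{\mathrm{Cl}}(v)$ upgrades $\overline{\mathop{\mathrm{Cl}}(v)}=\Omega(v)$ to equality'' as in Lemma~\ref{lem:correspond_pn}), transferred through the end compactification and blow-down, where, as you note, only singular points are added or collapsed. Your handling of the forward direction and of the openness of $\mathrm{P}(v)$ matches the paper's intent, so no changes are needed.
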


\section{Examples} 

The non-compactness is necessary for the equivalence in the main results. 

\begin{example}\label{ex:counter_exam01}
There is a flow $v$ on a non-compact surface $S$ with $\emptyset \neq \mathrm{R}(v) \subset \overline{\mathop{\mathrm{Cl}}(v)} = \Omega(v)$. 
To construct such an example, recall the Denjoy construction. 
Fix an irrational number $r \in \R - \Q$, a translation $\widetilde{f_0} \colon \R \to \R$ by $\widetilde{f_0} (x) := x +r$, and an irrational rotation $f_0 \colon \mathbb{S}^1 \to \mathbb{S}^1$ on a circle $\mathbb{S}^1 := \R/\Z$ by $f_0([x]) := [x+r]$. 
Fix a sequence $(n_m)_{m \in \Z}$ with $n_0 = 0$ and $n_m < n_{m+1}$ such that $\lim_{m \to -\infty} f_0^{n_m}([0]) = [0]$ from negative side and $\lim_{m \to \infty} f_0^{n_m}([0]) = [0]$ from positive side. 
Consider a sequence $\{l_n := 1/(1+n^2) \mid n \in \Z \}$ and a sequence $\{I_n \mid n \in \Z \}$ of half-open intervals $I_n := [-1/(2+2n^2), 1/(2+2n^2))$ with the length $l_n$.  
Put $L := \sum_{n \in \Z} l_n = \sum_{n \in \Z} 1/(1+n^2) < \infty$ and define $y_n := \widetilde{f_0}^n(0) - \lfloor \widetilde{f_0}^n(0) \rfloor \in [0,1)$. 
Insert the interval $I_n$ at the point $y_n \in [0,1)$ and denote by $S^1 := \R/(1+L)\Z$ the resulting circle and by $f \colon S^1 \to S^1$ the resulting homeomorphism with a unique minimal set $\mathcal{C}$, which is a Cantor set. 
Let $v_0$ be the suspension flow of $f \colon S^1 \to S^1$ on the mapping torus $T^2 := (S^1 \times [0,1])/\sim$ with the unique minimal set $\mathcal{M}$, which is the suspension of the Cantor set. 

Define points $0_{n,-}$ and $0_{n,+}$ by the points of the boundary of $I_n$. 
By construction, the sequences $(0_{n_m,-})_{n \in \Z}$ and $(0_{n_m,+})_{n \in \Z}$ of points in $S^1$ satisfies that $\lim_{ m \to -\infty} 0_{n_m,-} = \lim_{n \to -\infty} 0_{n_m,+} = 0_{0,-}$ and $\lim_{m \to \infty} 0_{n_m,-} = \lim_{m \to \infty} 0_{n_m,+}  = 0_{0,+}$. 
Fix a monotone decreasing sequence $(r_l)_{l \in \Z_{>0}}$ of points in $I_0$ converging to $0_{0,-}$. 
Put $p_l := f^{n_l}(r_{|l|}) \in I_{n_l} \subset S^1$ for any $l \in \Z_{>0}$.
Then $\lim_{l \to -\infty} p_l = 0_{0,-}$ and $\lim_{l \to \infty} p_l  = 0_{0,+}$. 
Take pairwise disjoint closed intervals $B_l \subset I_0$ which are \nbds of points $r_l$. 
Then the images $D_l :=  f^{n_l}(B_{|l|}) \subset I_{n_l}$ for any $l \in \Z_{\neq 0}$ are \nbds of $p_l$. 
%
Removing a subset $( \{ 0_{0,-}, 0_{0,+} \} \sqcup \bigsqcup_{l \in \Z_{\neq 0}} D_ {l}) \times \{ 1/2 \} \subset T$, 
the resulting flow $v_1$ on the resulting surface $T_1$ satisfies that every minimal set is either $\mathcal{M}_1 := \mathcal{M} - (\{ 0_{0,-}, 0_{0,+} \} \times \{ 1/2 \})$
or a proper orbit $O_{v_1}(p)$ for some $p \in \bigsqcup_{l  \in \Z_{>0}} B_l \times \{ 1/2 \} \subset I_0 \times \{ 1/2 \}$. 
Consider open intervals $C_l \subset D_l$ containing $p_l$ with $C_{|l|} = f_1^{n_ {|l|} - n_{-|l|}}(C_{-|l|})$ for any $l \in \Z_{\neq 0}$. 
Identifying $C_l$ and $C_{-l}$, the resulting surface is an open surface with infinitely many genus and the resulting flow $v$ is non-singular such that every minimal set is either $\mathcal{M}_1$ or a proper orbit $O_{v_1}(p)$ for some $p \in \bigsqcup_{l  \in \Z_{>0}} B_l \times \{ 1/2 \} \subset I_0 \times \{ 1/2 \}$. 
Then $\mathcal{M}_1 = \mathrm{E}(v)$ and $\mathop{\mathrm{Per}}(v) = v(\bigcup_{l \in \Z_{>0}} C_l)$. 
Since $C_l \subset D_l =  f^{n_l}(B_{|l|})$, $\lim_{l \to -\infty} n_l = - \infty$, and $\lim_{l \to \infty} n_l =  \infty$, we obtain $\mathcal{M} \subset \overline{\mathop{\mathrm{Per}}(v)}$. 
The non-existence of locally dense orbits implies that $\emptyset \neq \mathrm{R}(v) \subset \overline{\mathop{\mathrm{Per}}(v)}$. 
\end{example}

We describe concrete examples and (necessary and) sufficient conditions for the denseness of closed orbits with respect to some kind of classes of flows. 

\begin{example}\label{ex:counter_exam_strict}
There is a flow $v$ generated by an $\Omega$-stable vector field with four critical points on a sphere with $\overline{\mathop{\mathrm{Cl}}(v)} \subsetneq \Omega(v)$ as in Figure~\ref{fig:omega}.
\begin{figure}
\begin{center}
\includegraphics[scale=0.3]{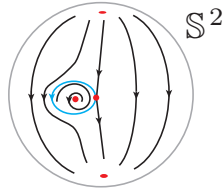}
\end{center}
\caption{An $\Omega$-stable spherical flow $v$ with $\overline{\mathop{\mathrm{Cl}}(v)} \subsetneq \Omega(v)$ such that the singular point set consists of one sink, two sources, and one saddle.}
\label{fig:omega}
\end{figure} 
In fact, the singular point set $\Sv$ consists of one sink, two sources, and one saddle as in Figure~\ref{fig:omega}. 
Then the unique homoclinic orbit of the saddle is the difference $\Omega(v) - \overline{\mathop{\mathrm{Cl}}(v)}$. 
Roughly speaking, it is known that $\Omega$-stable flows on closed surfaces are ``Morse-Smale'' flow without the non-existence condition of heteroclinic separatrices. 
Recall that a $C^1$ vector field $X$ on a manifold $M$ is $\Omega$-stable if there is its $C^1$ \nbd $\mathcal{U} \subset \chi^1(M)$ such that for any vector filed $Y$ in $\mathcal{U}$ there is a homeomorphism $h : \Omega(X) \to \Omega(Y)$ which maps orbits of $X$ to orbit of $Y$ preserving the orientation of orbits. 
A $C^1$ vector field on a closed manifold is $\Omega$-stable if and only if it satisfies the no-cycle condition  and all non-wandering orbits are hyperbolic closed orbits \cite{hayashi1997connecting,pugh1970omega}. 
Therefore any $\Omega$-stable flows on closed surfaces correspond to ``Morse-Smale'' flows without the non-existence condition of heteroclinic separatrices, and so the flow $v$ is $\Omega$-stable. 
\end{example}

The following two statements imply the necessity of the finite existence of connected components of the singular point set. 

\begin{lemma}\label{lem:counter_exam00}
There is a toral flow $v$ with uncountable singular points satisfying the following three conditions: 
\\
$(1)$ $\mathrm{R}(v) = \emptyset$ and there are no circuits. 
\\
$(2)$ $\overline{\mathop{\mathrm{Cl}}(v)} \neq \Omega(v)$.
\end{lemma}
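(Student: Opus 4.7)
The plan is to construct a flow $v$ on $T^{2}$ whose uncountable singular set ``swallows'' the would-be circuit structure of a Bendixson configuration, so that a non-wandering non-closed orbit survives outside $\overline{\mathop{\mathrm{Cl}}(v)}$ without producing any circuit (in the strict sense of the paper, which requires finitely many singular points). The idea is to take a classical Bendixson setup — a saddle with a separatrix loop and a one-parameter family of orbits limiting onto the loop — and then ``degenerate'' the interior of the loop into an uncountable singular set so that the center family becomes a family of non-closed proper orbits, and the loop itself is no longer a circuit.

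First I would fix on $T^{2}=\mathbb{S}^{1}\times\mathbb{S}^{1}$ an uncountable closed invariant set $\Sigma$ to serve as the degenerate singular set — for instance a Cantor ``wall'' $\Sigma=\{x=0\}\times\mathbb{S}^{1}$ thickened by collapsing a suitable invariant region, or a closed disk whose interior will be singular. Away from $\Sigma$ I would build a saddle $p$ and a separatrix orbit $\gamma$ whose $\alpha$- and $\omega$-limits are prescribed points of $\Sigma$. Because the closure of $\gamma$ meets $\Sigma$, and $\Sigma$ carries uncountably many singular points, the ``loop'' $\overline{\gamma}$ fails the finiteness clause in the definition of a non-trivial circuit; thus no circuits are introduced by $\gamma$.

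Next, I would place in a neighborhood of $\gamma$ a one-parameter family of non-closed orbits, obtained by degenerating the Bendixson center family: each orbit of the family goes from one singular point of $\Sigma$ to another, but spends increasingly long times making ``laps'' close to $\gamma$ as the parameter approaches the limiting value that corresponds to $\gamma$ itself. The standard Bendixson argument — that for any neighborhood $U$ of a point $z\in\gamma$ and any $N>0$ one can find an orbit $O_{s}$ in the family with $v_{t}(U)\cap U\neq\emptyset$ for some $|t|>N$ — then shows $\gamma\subseteq\Omega(v)$. Since by construction there are no periodic orbits and the only singular points near $\gamma$ belong to $\Sigma\cup\{p\}$ (which do not accumulate onto $\gamma\setminus\Sigma$), we have $\gamma\not\subseteq\overline{\mathop{\mathrm{Cl}}(v)}$, witnessing $\overline{\mathop{\mathrm{Cl}}(v)}\subsetneq\Omega(v)$.

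Finally I would verify the remaining properties: (a) $\mathrm{R}(v)=\emptyset$ because every non-singular orbit is proper, travelling from one singular point to another (each orbit of the accumulation family, as well as $\gamma$ itself, is of this form, and the usual time-reparametrization argument shows no orbit is its own $\omega$- or $\alpha$-limit); (b) there are no non-trivial circuits because every candidate loop structure passes through the uncountable set $\Sigma$, failing the finiteness clause; (c) $\overline{\mathop{\mathrm{Cl}}(v)}=\overline{\Sv}$ is precisely $\Sigma\cup\{p\}$, while $\Omega(v)$ strictly contains $\gamma$.

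The main obstacle is the construction of the ``pseudo-center family'': I must produce a one-parameter family of non-closed, non-recurrent orbits whose lap times around $\gamma$ diverge, without inadvertently creating periodic orbits (which would be closed and would place $\gamma$ back into $\overline{\mathop{\mathrm{Cl}}(v)}$) or recurrent orbits (which would violate $\mathrm{R}(v)=\emptyset$). The key technical device is to exploit the degeneracy of $\Sigma$: since orbits approach $\Sigma$ in infinite time, they can be made to wind arbitrarily many times near $\gamma$ before escaping to their singular end, producing the required return times. Verifying that this winding does not create closed or recurrent orbits, and that all candidate loop structures are absorbed by the uncountable singular part, is the delicate part of the construction.
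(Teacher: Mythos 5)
Your proposal has a genuine gap exactly where the two unusual hypotheses of the lemma must be reconciled, namely in the claim that no circuits arise. In the paper's definition, a non-trivial circuit is a continuous image of a circle that is a union of separatrices and \emph{finitely many singular points}; the finiteness refers to the singular points lying \emph{on the circuit}, not to the singular point set of the flow. So the uncountability of $\Sigma$ does not by itself disqualify anything: in your ``classical Bendixson setup'' the separatrix loop together with the saddle $p$ (or, in your second variant, $\gamma$ together with its limit point(s) in $\Sigma$ when $\alpha(\gamma)$ and $\omega(\gamma)$ are single points and the loop closes up) is a loop consisting of one separatrix and one or two singular points, i.e.\ a bona fide non-trivial circuit, and condition $(1)$ fails no matter how degenerate you make the region it bounds. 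If instead the limit points are distinct and nothing closes up, then the non-wandering property of $\gamma$ is unexplained: your ``lap'' mechanism needs nearby orbits to travel from near $\omega(\gamma)$ back to near $\alpha(\gamma)$, and to avoid a circuit this return channel must itself carry infinitely many singular points (e.g.\ a whole singular arc of $\Sigma$ inside the accumulation set), otherwise the set the family accumulates on is again a loop with finitely many singular points on it. Your blanket argument that ``every candidate loop structure passes through the uncountable set $\Sigma$, failing the finiteness clause'' therefore does not work, and the construction that would make it work is precisely the part you defer as ``the delicate part,'' so the lemma is not actually proved.

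For comparison, the paper gets the example in one step: take the suspension of a Denjoy homeomorphism on the torus, whose exceptional minimal set $\mathcal{M}$ meets the section in a Cantor set $\mathcal{C}\times\{1/2\}$, and multiply the generating vector field by a bump function vanishing exactly on $\mathcal{C}\times\{1/2\}$. The singular set becomes this uncountable Cantor set; every former orbit of $\mathcal{M}$ is cut into proper separatrices running from one singular point to another, so $\mathrm{R}(v)=\emptyset$; no finite chain of such separatrices can close up because the Denjoy map has no periodic points, so there are no circuits; yet every point of $\mathcal{M}$ stays non-wandering, whence $\overline{\mathop{\mathrm{Cl}}(v)}=\mathcal{C}\times\{1/2\}\subsetneq\mathcal{M}=\Omega(v)$. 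If you wish to salvage your route, the fix is to arrange that every would-be limit circuit contains a non-degenerate continuum (an arc) of singular points, rather than merely coexisting with an uncountable singular set elsewhere on the surface, and then to carry out the pseudo-center construction explicitly.
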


\begin{proof}
%
Consider the suspension flow $v$ of a Denjoy homeomorphism $f \colon \mathbb{S}^1 \to \mathbb{S}^1$ with a minimal set $\mathcal{C} \subset \mathbb{S}^1$, which is a Cantor set, on the mapping torus $S := (\mathbb{S}^1 \times [0,1])/ (x,0) \sim (f(x),1)$. 
Then the minimal set $\mathcal{M}$ satisfies that $\mathcal{M} \cap (\mathbb{S}^1 \times {1/2}) = \mathcal{C} \times \{ 1/2 \} \subset S$. 
Moreover, we obtain $\Omega(v) = \mathrm{R}(v) = \mathcal{M}$. 
Since the complement $\mathbb{S}^1 - \mathcal{C}$ of the Cantor set $\mathcal{C}$ in the circle $\mathbb{S}^1$ consists of wandering domaines and so consists of non-recurrent points, we have $S - \mathcal{M} = \mathrm{P}(v)$. 
Using the bump function $\varphi$ with $\varphi^{-1}(0) = \mathcal{C} \times \{ 1/2 \}$, replace the orbits in $\mathcal{M}$ with a union of singular points and multi-saddle separatrices of the resulting flow $v_{\varphi}$ (i.e. $\mathcal{M} = \mathop{\mathrm{Sing}(v_{\varphi})} \sqcup \{ \text{separatrix of } v_{\varphi} \}$) such that $O_{v_{\varphi}}(y) = O_{v}(y)$ for any point $y \in S - \mathcal{M}$.
Then $S =  \mathop{\mathrm{Sing}}(v_{\varphi}) \sqcup \mathrm{P}(v_{\varphi})$ and so $\overline{\mathop{\mathrm{Cl}}(v)} = \mathop{\mathrm{Sing}}(v)  = \mathcal{C} \times \{ 1/2 \} \subsetneq \mathcal{M} = \Omega(v_{\varphi})$. 
\end{proof}


\begin{lemma}\label{lem:counter_exam01}
There is a flow $v$ on a closed surface $S$ with countable singular points satisfying the following three conditions: 
\\
$(1)$ $\mathrm{R}(v) = \emptyset$ and there are neither strict limit circuits nor circuits with wandering holonomy.
\\
$(2)$ $\overline{\mathop{\mathrm{Cl}}(v)} = \Cv = \Sv \neq \Omega(v)$.
\\
$(3)$ The flow $v$ has a non-periodic non-limit circuit in $\Omega(v)$. 
\end{lemma}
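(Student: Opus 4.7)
The plan is to construct an explicit example on the sphere $S^2$ by starting from a basic spherical flow with a saddle--homoclinic structure (akin to Example~\ref{ex:counter_exam_strict}, in which the homoclinic loop is itself a strict limit circuit) and then ``intercepting'' the orbits that previously approached the loop by inserting countably many auxiliary singular points converging to the saddle. More precisely, I would place a saddle $p$ on $S^2$ whose homoclinic loop $\gamma$ encloses a disk containing a source, while the complementary disk contains a sink, giving a baseline flow in which $\gamma$ lies in $\Omega(v)$. Into a small collar of $\gamma$ on the source side I then insert a sequence of sinks $M_n$ together with auxiliary saddles $s_n$ chosen so that $M_n,s_n\to p$, and I design the flow so that (i) each orbit in the collar is attracted to some $M_n$, with the basins separated by the stable manifolds of the $s_n$, and (ii) orbits closer to $\gamma$ wind around $\gamma$ arbitrarily many times before settling into their basin. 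A mirror modification is performed on the sink side using sources $N_n\to p$ to handle $\alpha$-limits symmetrically.

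To verify the three conditions I would check the following. For (1): every non-singular orbit is proper with $\alpha$- and $\omega$-limits at discrete singular points, giving $\mathrm{R}(v)=\emptyset$; no strict limit circuits arise because $\gamma$ itself fails to be a limit circuit (see (3) below), and the separatrices of the auxiliary saddles $s_n$ are arranged as a tree terminating at the $M_n$ and $N_n$ with no internal cycles, so no further circuits are produced; circuits with wandering holonomy are excluded because $S^2$ is orientable. For (2): the absence of periodic orbits gives $\Cv=\Sv$; the countable set $\Sv$, consisting of $p$ together with all $M_n$, $s_n$, and $N_n$, is closed because these sequences accumulate at $p\in\Sv$; and $\Sv$ is strictly smaller than $\Omega(v)$ because the winding forces every separatrix point of $\gamma$ to be non-wandering. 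For (3): $\gamma$ is a non-periodic circuit (it contains the singular point $p$) lying in $\Omega(v)$, and it is not a limit circuit since, in either collar, orbits have $\omega$- or $\alpha$-limits at discrete $M_n$ or $N_n$ rather than at the whole of $\gamma$.

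The main obstacle is to realize simultaneously two apparently opposed requirements on a collar of $\gamma$: the winding number of nearby orbits must diverge as the collar shrinks --- so that every separatrix point on $\gamma$ is non-wandering --- while each individual orbit must have its $\omega$-limit at a discrete sink $M_n$, not at $\gamma$, so that $\gamma$ does not become a limit circuit. I would address this by designing the flow near $\gamma$ so that the speed transverse to $\gamma$ vanishes much faster than the speed along $\gamma$, forcing many windings per unit of transverse motion, while the local saddle structure at the $s_n$ partitions the collar into the basins of the $M_n$ and steers each orbit into exactly one of them. A secondary technical obstacle is to verify that the auxiliary saddles $s_n$ do not themselves generate unintended strict limit circuits; this is handled by the tree arrangement of their separatrices described above.
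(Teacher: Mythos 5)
Your plan is viable and, at its core, it is the same mechanism as the paper's proof: take a flow with a circuit onto which nearby orbits spiral, then ``truncate'' those orbits by inserting countably many singular points accumulating at a single point of the circuit, so that every orbit becomes proper with singular limit sets (hence $\mathrm{R}(v)=\emptyset$ and no limit circuits at all), while the truncated orbits still make full turns arbitrarily close to the circuit, keeping it inside $\Omega(v)\setminus\overline{\Cv}$. The paper realizes this on the torus: it starts from a flow with one limit cycle $C$ and proper orbits spiralling onto it, chooses a closed transversal through a point $z\in C$, and replaces the flow boxes between consecutive turns of a spiralling orbit by ``comb'' boxes (Figure~\ref{Spiral}), so that $\mathop{\mathrm{Sing}}(v)=\{z\}\sqcup\{x_n,y_n\}_{n\in\Z}$ is countable and closed, $\Omega(v)=C\sqcup\{x_n,y_n\}$, and $C$ (now non-periodic because $z$ is singular) is not a limit circuit; orientation-preservation of the return maps rules out wandering holonomy, exactly as your appeal to orientability of $S^2$ does. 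Your version on the sphere, with the circuit already non-periodic because it passes through $p$ and with sink--saddle pairs $M_n,s_n\to p$ cutting the inside collar into basins whose winding number before capture is finite but unbounded, buys a somewhat more ``dynamical'' picture (explicit basins instead of surgered boxes), but it proves nothing more general; both arguments hinge on the same two checks you single out: the inserted singular points accumulate only at a point that is already singular, and every collar orbit is captured after finitely many turns while turns-before-capture are unbounded.

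Two details in your write-up need repair, though neither is fatal. First, the baseline you describe cannot exist as stated: on $S^2$ a flow whose only singular points are a source, a sink and a (topological) saddle has index sum $1\neq 2$, and, more to the point, orbits on the outside of a homoclinic loop of a genuine saddle never accumulate on the whole loop --- they pass near it once and leave along the outer unstable separatrix --- so there is nothing on the ``sink side'' for your mirror family $N_n$ to intercept. Either use the singular data of Example~\ref{ex:counter_exam_strict} (one sink, two sources, one saddle), in which case only the inside of the loop needs modification and the mirror step should be deleted, or take $p$ to be an index-zero singular point with two hyperbolic sectors, in which case both sides spiral onto $\gamma$ and your symmetric treatment is genuinely needed. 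Second, the remark that making the transverse speed vanish faster than the tangential speed ``forces many windings'' is not what does the work: the number of turns an orbit makes before capture is governed entirely by how the stable manifolds of the $s_n$ partition the collar (which you do arrange), so the non-wandering property of the points of $\gamma$ should be argued from that partition --- orbits in the $n$-th basin stay in the $n$-th spiral band, hence within an arbitrarily thin collar of $\gamma$ for at least two full turns when $n$ is large, with return times unbounded because the passages near $p$ and near the $s_n$ slow down --- not from a rescaling of the vector field.
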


\begin{proof}
Consider a toral flow $w$ which consists of one non-contractible limit cycle $C$ and non-closed proper orbits. 
Then $\T^2 = C \sqcup \mathrm{P}(w)$. 
Fix a point $z \in C$ and a non-closed proper orbit $O$.
Write an open trivial flow box $D := \T^2 -(C \sqcup O) \subset \mathrm{P}(w)$.
Choose a closed transversal $T$ through $z$,  a point $x \in O$, and a monotonic sequence $(t_n)_{n \in \Z}$ on $O$ with $T \cap O = \{ x_n \}_{n \in \Z}$, $\lim_{n \to \infty} t_n = \infty$, $\lim_{n \to - \infty} t_n = - \infty$, and $\lim_{n \to \infty} x_n = \lim_{n \to - \infty} x_n = z$ such that any connected component of the set difference $D \setminus T$ are open trivial flow boxes, where $x_n := w_{t_n}(x)$.
Write $D_n$ the open trivial flow box with four corners $x_n, x_{n+1}, x_{n+2}, x_{n+3}$ such that $D \setminus T = \bigcup_{n \in \Z} D_n$.
Write an open trivial flow box $D'_{2n} = D_{2n} \sqcup D_{2n + 1} \sqcup ((T \setminus O) \cap \overline{D_{2n}} \cap \overline{D_{2n+1}}) \subset D$.
Replacing the closure of each $D'_{2n}$ by a box with a flow as shown in Figure~\ref{Spiral}, we obtain the resulting flow $v$ on the torus $\T^2$ such that the singular point set $\mathop{\mathrm{Sing}}(v) = \{ z \} \sqcup \{ x_n, y_n \}_{n \in \Z} \subset T$ is countable. 
Since any open trivial flow box $D'_{2n}$ consists of singular points and non-recurrent points, we have that $\T^2 = \mathop{\mathrm{Sing}}(v) \sqcup \mathrm{P}(v)$ and $\Omega(v) = C \sqcup \{ x_n, y_n \}_{n \in \Z} \supsetneq  \mathop{\mathrm{Sing}}(v) = \overline{\mathop{\mathrm{Cl}}}(v)$.
The non-periodic circuit $C$ is neither the $\omega$-limit set of a point nor the $\alpha$-limit set of a point and so is not a limit circuit. 
Moreover, since the first return maps of any closed transverse arc are orientation-preserving, there are no circuits with wandering holonomy. 
Since any strict limit circuits are contained in $\Omega(v)$, there are no limit circuits. 
\end{proof}
\begin{figure}
\begin{center}
\includegraphics[scale=0.4]{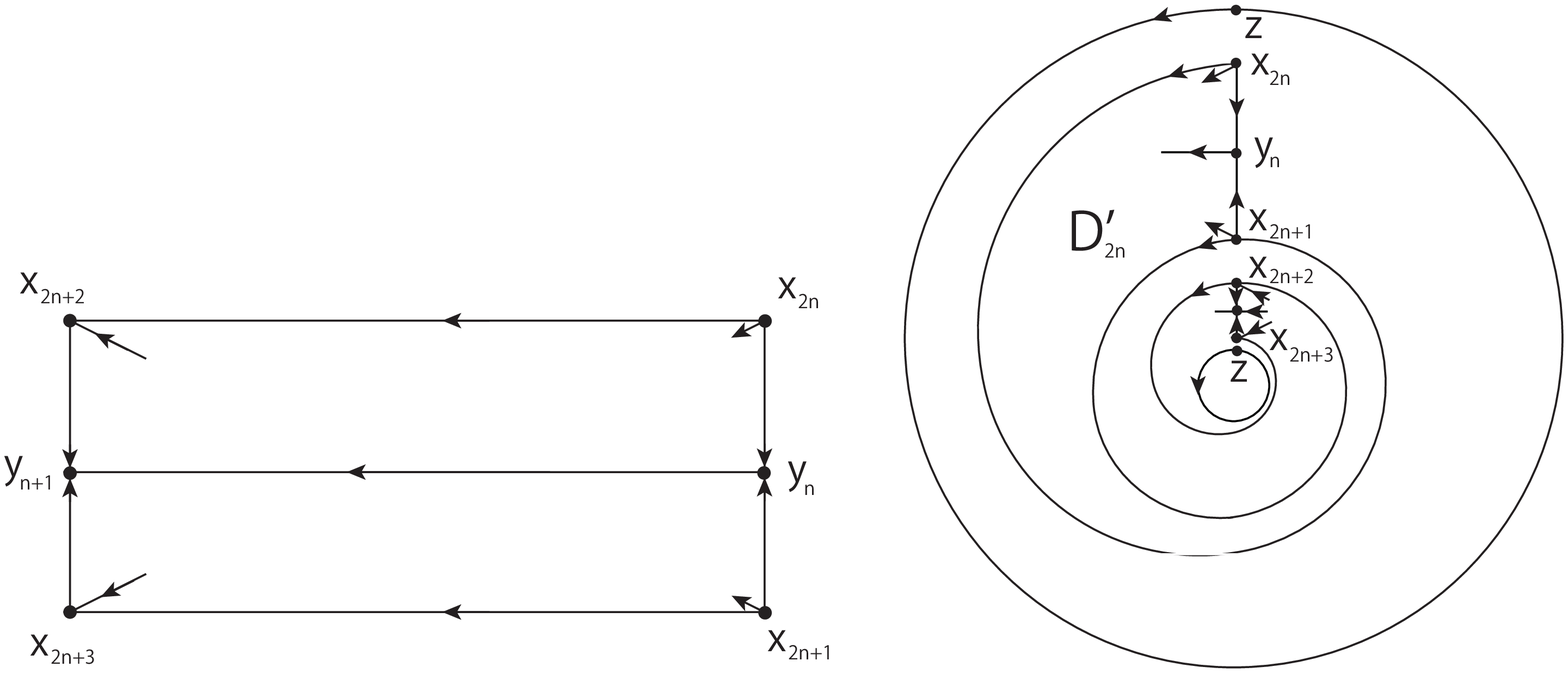}
\end{center}
\caption{An open flow box $D'_{2n}$}
\label{Spiral}
\end{figure}
%
The following example implies the necessity of the non-existence of circuits with wandering holonomy in the results.

\begin{lemma}\label{lem:counter_exam02}
There is a flow $w$ without non-degenerate singular points on a non-orientable closed surface $S$ with non-orientable genus four satisfying the following three conditions: 
\\
{\rm(1)} $\overline{\mathop{\mathrm{Cl}}(w)} = \mathop{\mathrm{Cl}}(w) = \mathop{\mathrm{Sing}}(w) = \bigcup_{x \in S} \omega(x) \cup \alpha(x)\subsetneq \Omega(w)$. 
\\
{\rm(2)} $\mathop{\mathrm{Per}}(w) \sqcup \mathrm{R}(v) = \emptyset$. 
\\
{\rm(3)} There are no strict limit quasi-circuits but a circuit with wandering holonomy. 
\end{lemma}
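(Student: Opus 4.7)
The plan is to build $w$ by a direct geometric construction, starting from a very simple flow on the sphere and then performing local surgeries to simultaneously install the required non-orientable topology and the required wandering-holonomy behavior. As a seed, take the north--south flow $v_0$ on $S^2$ with exactly two singular points: a degenerate source $N$ and a degenerate sink $S$ (made degenerate by choosing a radial field like $\dot r = \pm r^{3}$ near each), so that every other orbit is a non-closed proper arc with $\alpha$-limit $\{N\}$ and $\omega$-limit $\{S\}$. This seed already satisfies $\mathop{\mathrm{Sing}}(v_0)=\mathop{\mathrm{Cl}}(v_0)=\Omega(v_0)=\{N,S\}$, has no periodic or non-closed recurrent orbits, and no non-degenerate singular points.

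Next I would apply the one-punctured M\"obius band twist of Figure~\ref{twist01} (the same local surgery used in the proof of Lemma~\ref{lem:bdry_circuit}) to a collection of pairwise disjoint trivial flow boxes lying on a common orbit arc of $v_0$, performed enough times to reduce the Euler characteristic from $2$ to $-2$, so that the resulting surface is non-orientable of genus four. Each twist does three things: it adds a crosscap to the surface; it inserts only finitely many degenerate singular points inside the box, each of which becomes an endpoint of new separatrices connecting back into the source-to-sink funnel of $v_0$; and it alters the first return map on a small transverse arc through a boundary point of the box so that the return becomes orientation reversing with domain and codomain disjoint, exactly as in the hypothesis of Lemma~\ref{lem:bdry_circuit}. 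I would arrange the twists in a chain along a single orbit arc so that the local pieces of wandering-holonomy structure concatenate into \emph{one} non-trivial circuit $\gamma$ passing through $N$, $S$, and the new singular points. This $\gamma$ is then a circuit with wandering holonomy by construction, and Lemma~\ref{lem:wandering_hol} provides points in $\Omega(w)\setminus\overline{\mathop{\mathrm{Cl}}(w)}$.

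Once the construction is specified, the verification is routine. The surgery creates neither periodic orbits nor non-closed recurrent orbits (it only redirects local non-recurrent arcs and pins them on new degenerate singular points), so $\mathop{\mathrm{Per}}(w)\sqcup\mathrm{R}(w)=\emptyset$ and every singular point is degenerate. The singular set is finite and hence closed, and every non-singular orbit limits on singular points only, giving $\overline{\mathop{\mathrm{Cl}}(w)}=\mathop{\mathrm{Cl}}(w)=\mathop{\mathrm{Sing}}(w)=\bigcup_{x\in S}\omega(x)\cup\alpha(x)$, with strict inclusion in $\Omega(w)$ coming from $\gamma$. To rule out strict limit quasi-circuits it suffices to note that no non-trivial circuit is an $\alpha$- or $\omega$-limit of any point, since every orbit accumulates only on singular points; hence no limit (quasi-)circuit exists at all, and in particular no strict one. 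The main obstacle will be combinatorial bookkeeping in the twist step: arranging the crosscap insertions so that they fuse into a \emph{single} wandering-holonomy circuit with the exact non-orientable genus four, without accidentally producing a second circuit, a limit cycle, or a recurrent orbit in the connecting regions; chaining the flow boxes along one orbit arc and gluing twist boundaries in a linked fashion is how I would control this.
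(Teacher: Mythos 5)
Your plan defers exactly the point that the lemma is about. This is an existence statement, so the proof must exhibit a concrete flow for which the wandering-holonomy circuit and all the listed properties can be checked; you postpone "arranging the crosscap insertions so that they fuse into a single wandering-holonomy circuit \ldots without accidentally producing a second circuit, a limit cycle, or a recurrent orbit" to unspecified combinatorial bookkeeping, and that is the entire content of the lemma. Worse, the sketch as written cannot work literally. First, a non-trivial circuit is the image of an oriented circle by an orientation-preserving map into the directed graph of separatrices and singular points, i.e.\ a coherently directed cycle; since no separatrix leaves a sink or enters a source, there is no directed circuit passing through both your degenerate source $N$ and degenerate sink $S$, so the "single circuit $\gamma$ through $N$, $S$, and the new singular points" does not exist as a circuit in the paper's sense. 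Second, wandering holonomy requires the first return map on a small transversal at a point of $\gamma$ to have \emph{nonempty} domain: nearby orbits must actually come back. In your seed flow every orbit runs monotonically from $N$ to $S$, and inserting crosscap boxes along one such orbit arc only flips orbits locally; they still proceed to the sink and cross a small transversal at the would-be circuit at most once, so the return map has empty domain, the points of the modified arc are wandering, and the strict inclusion $\overline{\mathop{\mathrm{Cl}}(w)}\subsetneq\Omega(w)$ in (1) fails. You also misattribute the mechanism: in Lemma~\ref{lem:bdry_circuit} the orientation-reversing return with disjoint domain and codomain is a \emph{hypothesis} on the original flow, and the twist of Figure~\ref{twist01} is only an auxiliary modification (making the return on $I_-$ repelling so the generalized Poincar\'e--Bendixson theorem applies); the twist does not create returns where none existed, so citing it does not produce wandering holonomy.

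For comparison, the paper does not do surgery on a sphere flow at all: it takes the explicit flow of Figure~\ref{g2-ex} on a non-orientable compact surface of genus two with two boundary components (one sink, one source, six $\partial$-saddles, and non-closed proper orbits), in which a proper orbit $O'$ lies in $\Omega(w_0)$ while every $\alpha$- and $\omega$-limit set is a singular point, and then passes to the double of that bordered surface to obtain the closed non-orientable example; the wandering-holonomy circuit and the absence of limit (quasi-)circuits are read off from that picture. If you want to salvage your route, you would need a global recurrence mechanism (orbits that genuinely return near the circuit after passing through a crosscap exactly once and then escape), which is what the bordered model plus doubling provides and what your source-to-sink funnel lacks.
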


\begin{proof}
Consider a flow $v_0$ on a non-orientable compact surface $S_0$ as in Figure~\ref{g2-ex} with $S_0 =  \mathop{\mathrm{Sing}}(w_0) \sqcup \mathrm{P}(v)$ and $\bigcup_{x \in S_0} \omega(x) \cup \alpha(x) = \mathop{\mathrm{Sing}}(w_0) \subsetneq \mathop{\mathrm{Sing}}(w_0) \sqcup O' = \Omega(w_0)$ such that $\Omega(w_0)$ does not contain limit circuits, where $O'$ is a proper orbit. 
Then the lift of the flow $w_0$ to the double $S$ of $S_0$ is desired. 
\begin{figure}
\begin{center}
\includegraphics[scale=0.3]{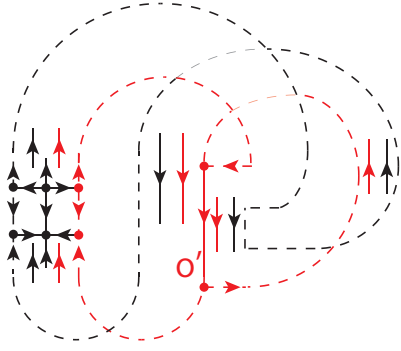}
\end{center}
\caption{A flow on a compact surface with two non-orientable genus and two boundary components consists of one sink, one source, six $\partial$-saddles, and non-closed proper orbits.}
\label{g2-ex}
\end{figure}
\end{proof}

Recall a circuit $\gamma$ has non-orientable holonomy if there are a non-singular point $x \in \gamma$ and arbitrarily small open transverse arc $I$ containing $x$ such that the domain of the first return map on $I$ is not empty but non-orientable. 
The non-existence of circuits with wandering holonomy in the results can not be weakened. 
In fact, the density condition does not inhibit the existence of non-orientable holonomy as follows. 

\begin{lemma}\label{lem:counter_exam03}
There is a flow $v$ on a Klein bottle satisfying $\overline{\mathop{\mathrm{Cl}}(v)} = \Omega(v)$ with a circuit with fixed-point-free non-orientable holonomy. 
\end{lemma}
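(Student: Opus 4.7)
My strategy is to construct $v$ explicitly on a Klein bottle $K$ realised as the union of two M\"obius bands $M, M'$ glued along their common boundary circle, arranging that the core of $M$ (a one-sided simple closed curve in $K$) is a non-trivial homoclinic circuit accumulated by double-wrapping periodic orbits.

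First I parametrize $M$ by $(y, s) \in [-1, 1] \times \R$ modulo $(y, s+1) \sim (-y, s)$ and write the flow as
\[
\dot{y} = 0, \qquad \dot{s} = h(s) + y^{2},
\]
where $h \colon \R \to \R_{\geq 0}$ is a smooth $1$-periodic function whose zero set equals $\Z$ (with a degenerate zero at each integer). The $1$-periodicity is exactly what the M\"obius identification requires, so this descends to a well-defined flow on $M$ with unique singular point $p = (0, 0)$. Since $\dot{s} > 0$ off $\{p\}$, the core $C_1 = \{y = 0\}$ is invariant, and $C_1 \setminus \{p\}$ is a single separatrix $\sigma$ with $\alpha(\sigma) = \omega(\sigma) = \{p\}$; thus $\gamma_1 := C_1 = \{p\} \cup \sigma$ is a non-trivial one-sided circuit. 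For any $c \neq 0$ the orbit through $(c, 0)$ moves monotonically in $s$, crosses $s = 1$ to $(-c, 0)$ via the identification, then returns to $(c, 0)$ after a second traversal, closing up; so every orbit in $M \setminus C_1$ is periodic and double-wraps $C_1$. An identical construction on $M'$ (with the same profile $h$) gives a flow whose velocities on the common boundary agree, yielding a continuous flow $v$ on $K$ with a second homoclinic circuit $\gamma_2$ on the core $C_2$ of $M'$.

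Next I verify the density condition. By construction $K = \{p, q\} \sqcup \sigma \sqcup \sigma' \sqcup \mathop{\mathrm{Per}}(v)$. The double-wrapping periodic orbits $\{y = c\} \cup \{y = -c\}$ in $M$ accumulate onto $C_1$ as $c \to 0$, so $\sigma \subset \overline{\mathop{\mathrm{Per}}(v)}$ and symmetrically $\sigma' \subset \overline{\mathop{\mathrm{Per}}(v)}$; hence $\overline{\mathop{\mathrm{Cl}}(v)} = K$. The same accumulation shows that every point of $\sigma \cup \sigma'$ is non-wandering, so $\Omega(v) = K$, and thus $\overline{\mathop{\mathrm{Cl}}(v)} = \Omega(v)$.

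Finally I read off the holonomy of $\gamma_1$. Take $x = (0, 1/2) \in \sigma$ and the open transverse arc $I = \{(y, 1/2) : |y| < \varepsilon\}$. For $y_{0} \in I \setminus \{0\}$ the orbit through $(y_{0}, 1/2)$ is the double-wrapping periodic orbit, whose first return to $I$ is $(-y_{0}, 1/2)$; hence the first return map $f_v$ is $y_{0} \mapsto -y_{0}$, which is orientation-reversing. Its only algebraic fixed point is $y_{0} = 0 = x$, but the positive orbit of $x$ lies on $\sigma$ and converges to $p$, so $x$ is not in the domain of $f_v$. Thus $f_v$ is fixed-point-free on its domain, making $\gamma_1$ a circuit with fixed-point-free non-orientable holonomy. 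The main subtlety is to make $\gamma_1$ non-periodic (so that the would-be fixed point $x$ is excluded from the domain) while still being accumulated by periodic orbits (forcing $\overline{\mathop{\mathrm{Cl}}(v)} = \Omega(v)$); the form $\dot{s} = h(s) + y^{2}$ does both simultaneously, the degenerate zero of $h$ creating the homoclinic and the $y^{2}$ term creating the nearby double-wrapping periodic orbits.
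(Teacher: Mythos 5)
Your construction is correct and is essentially the paper's own example made explicit: the paper starts from the periodic flow on the Klein bottle (fibred by circles double-wrapping the cores of the two M\"obius bands) and replaces a core periodic orbit with non-orientable holonomy by a singular point with a homoclinic separatrix, which is exactly what your formula $\dot y=0$, $\dot s=h(s)+y^{2}$ produces. The only (immaterial) difference is that you degenerate both cores rather than one, and you verify the density, non-wandering, and fixed-point-free orientation-reversing return map in coordinates rather than by the surgery description.
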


\begin{proof}
Consider a flow on a Klein bottle consisting of periodic orbits. 
Replacing a periodic orbit with non-orientable holonomy with a $0$-saddle with a homoclinic separatrix, we have the resulting flow $v$ which has a circuit with fixed-point-free non-orientable holonomy and consists of one non-closed proper orbits and closed orbits such that $\mathbb{K} = \Omega(v) = \overline{\mathop{\mathrm{Cl}}(v)}$. 
\end{proof}

\section{Observations on the density for non-wandering flows on compact surfaces}
Finally, we observe that the denseness of closed orbits for non-wandering flows on compact surfaces corresponds to the non-existence of locally dense orbits, and that any Hamiltonian flows on compact surfaces and any gradient flows on manifolds satisfy the denseness of closed orbits. 
We have the following characterization of non-wandering flows to characterize the density condition. 

\begin{proposition}\label{interior}
The following are equivalent for a flow $v$ on a compact surface $S$: 
\\
{\rm(1)} The flow $v$ is  non-wandering. 
\\
{\rm(2)} $\mathrm{int}\mathrm{P}(v) = \emptyset$. 
\\
{\rm(3)}  $\mathrm{int}\mathrm{P}(v) \sqcup \mathrm{E}(v) = \emptyset$ $(\mathrm{i.e.}$ $S = \mathrm{Cl}(v) \sqcup (\mathrm{P}(v) - \mathrm{int}\mathrm{P}(v)) \sqcup \mathrm{LD}(v) )$. 
\\
{\rm(4)} $S = \overline{\mathop{\mathrm{Cl}}(v) \sqcup \mathrm{LD}(v)}$. 
\end{proposition}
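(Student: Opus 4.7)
The plan is to establish the cycle $(2)\Rightarrow(1)\Rightarrow(3)\Rightarrow(4)\Rightarrow(2)$, using the decomposition $S=\Cv\sqcup\mathrm{P}(v)\sqcup\mathrm{LD}(v)\sqcup\mathrm{E}(v)$ and Lemma~\ref{lem:top23} as the principal tools. For $(2)\Rightarrow(1)$, the wandering set $W:=S\setminus\Omega(v)$ is open, and any wandering point $x$ is non-recurrent (its witnessing neighborhood $V$ has $v_t(V)\cap V=\emptyset$ for all large $|t|$, so $x\notin\omega(x)\cup\alpha(x)$); hence $W\subseteq\mathrm{P}(v)$, and by openness $W\subseteq\mathrm{int}\,\mathrm{P}(v)=\emptyset$, so $v$ is non-wandering.

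For $(1)\Rightarrow(3)$, non-wandering combined with Lemma~\ref{lem:top23} already yields $\mathrm{E}(v)=\emptyset$, as explicitly recalled in the preliminaries. To show $\mathrm{int}\,\mathrm{P}(v)=\emptyset$, I would argue by contradiction. A nonempty open $U\subseteq\mathrm{P}(v)$ may be replaced by its saturation (still open invariant and contained in $\mathrm{P}(v)$); choose $x\in U$, a flow box $B\subseteq U$, and a transversal $\Sigma\subseteq B$ through $x$. Non-wandering of $x$ yields sequences $\tilde y_n\to\tilde x$ on $\Sigma$ with $f_\Sigma^{k_n}(\tilde y_n)\to\tilde x$ for some $k_n\geq 1$, where $f_\Sigma$ is the first-return Poincar\'e map. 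Since $U\cap\Cv=\emptyset$, the map $f_\Sigma$ (replaced by $f_\Sigma^2$ if orientation-reversing) has no fixed point near $\tilde x$. If $k_n$ is bounded, extracting a subsequence and using continuity of $f_\Sigma^k$ forces $f_\Sigma^k(\tilde x)=\tilde x$, a periodic orbit in $U$, a contradiction. If $k_n\to\infty$, a one-dimensional intermediate-value / translation-type analysis of the fixed-point-free orientation-preserving homeomorphism $f_\Sigma$ shows that iterates on a one-sided deleted neighborhood of $\tilde x$ move monotonically away from $\tilde x$ and cannot re-accumulate there, again a contradiction.

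For $(3)\Rightarrow(4)$ and $(4)\Rightarrow(2)$: under $(3)$ the decomposition reduces to $S=\Cv\sqcup\mathrm{P}(v)\sqcup\mathrm{LD}(v)$ with $\mathrm{P}(v)$ nowhere dense, so $\overline{\Cv\sqcup\mathrm{LD}(v)}=S$, which is $(4)$. Conversely, $(4)$ together with the disjointness $\overline{\Cv\cup\mathrm{LD}(v)}\cap\mathrm{E}(v)=\emptyset$ from Lemma~\ref{lem:top23} forces $\mathrm{E}(v)=\emptyset$, and density of $\Cv\cup\mathrm{LD}(v)$ forces its complement $\mathrm{P}(v)\sqcup\mathrm{E}(v)=\mathrm{P}(v)$ to have empty interior, giving $(2)$.

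The hard part will be the case $k_n\to\infty$ in the step $(1)\Rightarrow(2)$: ruling out that a fixed-point-free one-dimensional homeomorphism admits a sequence of iterated returns accumulating at a single base point is the only non-formal step. An alternative route is to invoke a generalized Poincar\'e–Bendixson-type structure theorem on compact surfaces to show that any orbit in $\mathrm{int}\,\mathrm{P}(v)$ approaches a semi-attracting or semi-repelling limit structure whose collar orbits are globally wandering in the original flow, directly contradicting the non-wandering hypothesis.
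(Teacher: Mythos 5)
The directions $(2)\Rightarrow(1)$, $(3)\Rightarrow(4)$ and $(4)\Rightarrow(2)$ in your cycle are fine (and your route to $\mathrm{E}(v)=\emptyset$ via Lemma~\ref{lem:top23} is a legitimate variant of the paper's use of Ma\v \i er's theorem). The genuine gap is in $(1)\Rightarrow(3)$, i.e.\ in showing that non-wandering implies $\mathrm{int}\,\mathrm{P}(v)=\emptyset$. Your argument there uses only the non-wandering property of the single point $x\in\mathrm{int}\,\mathrm{P}(v)$, and no such local argument can work: the paper's own Example~\ref{ex:counter_exam_strict} (a saddle with a homoclinic loop on the sphere, one sink, two sources) exhibits a point $x$ on the homoclinic orbit that is non-recurrent, lies in $\mathrm{int}\,\mathrm{P}(v)$ (here $\mathrm{P}(v)$ is the complement of four singular points, hence open), has no closed orbits nearby, and is nevertheless non-wandering. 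Concretely, both branches of your dichotomy fail on this example. In the bounded-$k_n$ case you cannot conclude $f_\Sigma^{k}(\tilde x)=\tilde x$: the return times of $\tilde y_n$ blow up as the orbits limit onto the saddle connection, $\tilde x$ need not lie in the domain of $f_\Sigma^{k}$, and no periodic orbit is produced (in the homoclinic example $k_n\equiv 1$ and there are no periodic orbits at all). In the $k_n\to\infty$ case the claim that iterates of a fixed-point-free return map ``move monotonically away from $\tilde x$ and cannot re-accumulate there'' is false: on a one-sided deleted neighborhood the fixed-point-free map may move points monotonically \emph{toward} $\tilde x$ (semi-attracting behavior, as along a homoclinic loop or a one-sided limit circuit), in which case the returns do re-accumulate at $\tilde x$ and no contradiction with non-wandering-ness of $x$ arises.

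Any correct proof must use the hypothesis that \emph{every} point is non-wandering, not just $x$. The paper does this by quoting the classical Birkhoff-center result ([BS, Theorems III.2.12 and III.2.15]): for a non-wandering flow the set of recurrent points is dense, which gives $\mathrm{int}\,\mathrm{P}(v)=\emptyset$ at once (and then $\mathrm{E}(v)=\emptyset$ via Ma\v \i er). If you want to keep your return-map strategy, the accumulating (semi-attracting) case has to be completed by showing that the points of the one-sided collar, whose returns converge monotonically to $\tilde x$, are themselves wandering, contradicting the global hypothesis; your sketched ``alternative route'' points in this direction but, as stated, it is not a proof: it presupposes a Poincar\'e--Bendixson-type structure for limit sets although the proposition makes no finiteness assumption on $\mathop{\mathrm{Sing}}(v)$, and it ignores the case where the limit set of an orbit in $\mathrm{int}\,\mathrm{P}(v)$ is a quasi-minimal set rather than a semi-attracting or semi-repelling circuit-like structure.
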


\begin{proof} 
The assertion (3) implies the assertion (2). 
Recall that $S = \mathop{\mathrm{Cl}}(v) \sqcup \mathrm{P}(v) \sqcup \mathrm{R}(v)$ and that the union $\mathrm{P}(v)$ is the set of non-recurrent points.  
Then the assertions (3) and (4) are equivalent. 
By Lemma~\ref{lem:top23}, the union $\mathrm{P}(v) \sqcup \mathrm{E}(v)$ is a \nbd of $\mathrm{E}(v)$. 
By the Ma\v \i er theorem \cite{markley1970number,Torhorst1921}, the closure $\overline{\mathrm{E}(v)}$ is a finite union of closures of exceptional orbits and so is nowhere dense.  
This means that $\mathrm{E}(v) = \emptyset$ if $\mathrm{int}\mathrm{P}(v) = \emptyset$. 
Therefore the assertion $(2)$ implies the assertion $(3)$. 

Suppose that $v$ is non-wandering. 
By \cite[Theorem III.2.12 and Theorem III.2.15]{BS}, the set of recurrent points is dense in $S$. 
The density of recurrent points implies that $\mathrm{int}\mathrm{P}(v) = \emptyset$ and so $\mathrm{E}(v) = \emptyset$.  
This implies that $S = \overline{\mathop{\mathrm{Cl}}(v) \sqcup \mathrm{LD}(v)}$. 

Conversely, suppose that $\mathrm{int}\mathrm{P}(v) \sqcup \mathrm{E}(v)  = \emptyset$. 
Then $S = \mathop{\mathrm{Cl}}(v) \sqcup (\mathrm{P}(v) - \mathrm{int}\mathrm{P}(v)) \sqcup \mathrm{LD}(v)$. 
Therefore the closure of the set of recurrent points is the whole surface (i.e. $S = \overline{\mathop{\mathrm{Cl}}(v) \sqcup \mathrm{LD}(v)}$) and so $v$ is  non-wandering. 
\end{proof}

By the previous proposition, notice that exceptional quasi-minimal sets imply the existence of wandering domains. 
We have the following equivalence. 

\begin{lemma}\label{lem-01}
The following are equivalent for a flow $v$ on a compact surface $S$:
\\
$(1)$ The flow $v$ is non-wandering and $\overline{\mathop{\mathrm{Cl}}(v)} = \Omega(v)$.
\\
$(2)$ $\overline{\mathop{\mathrm{Cl}}(v)} = S$.
\\
$(3)$ $\mathrm{int} \mathrm{P}(v)  = \emptyset$ and $\overline{\mathop{\mathrm{Cl}}(v)} = \Omega(v)$.
\\
$(4)$ $\mathrm{LD}(v) \sqcup \mathrm{int} \mathrm{P}(v)  = \emptyset$ $(\mathrm{i.e.}$ $S = \mathop{\mathrm{Cl}}(v) \sqcup (\mathrm{P}(v) - \mathrm{int}\mathrm{P}(v))$ $)$.
\\
$(5)$ The flow $v$ is non-wandering and $\mathrm{LD}(v) = \emptyset$.
\end{lemma}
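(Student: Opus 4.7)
The plan is to prove the cyclic chain $(1) \Rightarrow (2) \Rightarrow (3) \Rightarrow (4) \Rightarrow (5) \Rightarrow (1)$, treating Proposition~\ref{interior} and Lemma~\ref{lem:top23} as black boxes. The book-keeping identities I will invoke repeatedly are the decomposition $S = \mathop{\mathrm{Cl}}(v) \sqcup \mathrm{P}(v) \sqcup \mathrm{LD}(v) \sqcup \mathrm{E}(v)$ together with the facts that $\Omega(v)$ is closed, $\Omega(v) \supseteq \mathop{\mathrm{Cl}}(v)$, and $v$ is non-wandering precisely when $\Omega(v) = S$.

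For $(1) \Rightarrow (2)$, non-wandering means $\Omega(v) = S$, so the equality $\overline{\mathop{\mathrm{Cl}}(v)} = \Omega(v)$ immediately gives $\overline{\mathop{\mathrm{Cl}}(v)} = S$. For $(2) \Rightarrow (3)$, observe that any point in $\mathrm{int}\,\mathrm{P}(v)$ has an open neighborhood contained in $\mathrm{P}(v)$, hence disjoint from $\mathop{\mathrm{Cl}}(v)$, so lies outside $\overline{\mathop{\mathrm{Cl}}(v)}$; under $(2)$ this forces $\mathrm{int}\,\mathrm{P}(v) = \emptyset$, and the chain $S = \overline{\mathop{\mathrm{Cl}}(v)} \subseteq \Omega(v) \subseteq S$ yields $\overline{\mathop{\mathrm{Cl}}(v)} = \Omega(v)$.

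For $(3) \Rightarrow (4)$, Proposition~\ref{interior} turns $\mathrm{int}\,\mathrm{P}(v) = \emptyset$ into non-wanderingness, hence $\Omega(v) = S$ and $\overline{\mathop{\mathrm{Cl}}(v)} = S$; now the second part of Lemma~\ref{lem:top23}, namely $\overline{\mathop{\mathrm{Cl}}(v) \sqcup \mathrm{E}(v)} \cap \mathrm{LD}(v) = \emptyset$, is violated unless $\mathrm{LD}(v) = \emptyset$. For $(4) \Rightarrow (5)$, $\mathrm{int}\,\mathrm{P}(v) = \emptyset$ yields non-wanderingness by Proposition~\ref{interior}, and $\mathrm{LD}(v) = \emptyset$ is already given. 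Finally, for $(5) \Rightarrow (1)$, Proposition~\ref{interior} rewrites non-wanderingness as $S = \overline{\mathop{\mathrm{Cl}}(v) \sqcup \mathrm{LD}(v)}$; with $\mathrm{LD}(v) = \emptyset$ this collapses to $S = \overline{\mathop{\mathrm{Cl}}(v)}$, which equals $\Omega(v) = S$, closing the loop.

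The only step with a subtlety is $(2) \Rightarrow (3)$: one must resist the tempting (and false) claim that $\mathrm{P}(v) \cap \overline{\mathop{\mathrm{Cl}}(v)} = \emptyset$. Points of $\mathrm{P}(v)$ on the boundary of a periodic cell do sit in $\overline{\mathop{\mathrm{Cl}}(v)}$; what is true, and all that is needed, is that the \emph{interior} of $\mathrm{P}(v)$ avoids $\overline{\mathop{\mathrm{Cl}}(v)}$. Beyond this caveat the argument is a bookkeeping exercise layered on Proposition~\ref{interior} and Lemma~\ref{lem:top23}, so no further obstacles are anticipated.
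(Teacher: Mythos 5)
Your proof is correct and rests on the same ingredients as the paper's own argument (the decomposition $S = \mathop{\mathrm{Cl}}(v) \sqcup \mathrm{P}(v) \sqcup \mathrm{LD}(v) \sqcup \mathrm{E}(v)$, Lemma~\ref{lem:top23}, and Proposition~\ref{interior}); the only organizational difference is that you run a single cycle $(1)\Rightarrow(2)\Rightarrow(3)\Rightarrow(4)\Rightarrow(5)\Rightarrow(1)$, and in $(3)\Rightarrow(4)$ you substitute Proposition~\ref{interior} together with Lemma~\ref{lem:top23} for the paper's appeal to Lemma~\ref{thm41}, which amounts to the same computation. Your $(2)\Rightarrow(3)$ step, including the caveat that only $\mathrm{int}\,\mathrm{P}(v)$ (not all of $\mathrm{P}(v)$) avoids $\overline{\mathop{\mathrm{Cl}}(v)}$, matches the paper's reasoning.
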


\begin{proof}
Recall that $S = \mathop{\mathrm{Cl}}(v) \sqcup \mathrm{P}(v) \sqcup \mathrm{R}(v)$.
Obviously, the assertions $(1)$ and $(2)$ are equivalent, and the assertion $(4)$ implies the assertion $(2)$.
Lemma~\ref{thm41} implies that the assertion $(3)$ implies the assertion $(4)$.
We show that the assertion $(2)$ implies the assertion $(3)$.
Indeed, if $\overline{\mathop{\mathrm{Cl}}(v)} = S$, then $\mathrm{int} \mathrm{P}(v)  = \emptyset$ and $\Omega(v) \subseteq S =  \overline{\mathop{\mathrm{Cl}}(v)} \subseteq \Omega(v)$ because of definition of $\Omega(v)$. 
We show that the assertion $(4)$ is equivalent to the assertion $(2)$.
Indeed, suppose $\mathrm{LD}(v) \sqcup \mathrm{int} \mathrm{P}(v)  = \emptyset$.
By Lemma~\ref{lem:top23}, we have $\mathrm{E}(v) = \emptyset$ and so $S = \mathop{\mathrm{Cl}}(v) \sqcup (\mathrm{P}(v) - \mathrm{int}\mathrm{P}(v)) = \overline{\mathop{\mathrm{Cl}}(v)}$.

Notice that the assertions $(1)$--$(4)$ imply the assertion $(5)$. 
Proposition~\ref{interior} implies the assertion (5) implies the assertion $(4)$. 
\end{proof}

In the non-wandering case, the denseness of closed orbits is characterized as follows. 

\begin{corollary}\label{prop:nw}
The following are equivalent for a non-wandering flow $v$ on a compact surface:
\\
$(1)$ $\overline{\mathop{\mathrm{Cl}}(v)} = \Omega(v)$.
\\
$(2)$ $\mathrm{LD}(v) = \emptyset$.
\end{corollary}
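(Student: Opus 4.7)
The plan is to obtain Corollary~\ref{prop:nw} as an immediate specialization of Lemma~\ref{lem-01}, since that lemma already contains the substantive content. Recall that Lemma~\ref{lem-01} asserts the equivalence, for a flow on a compact surface, of five statements, two of which are: $(1)$ $v$ is non-wandering and $\overline{\mathop{\mathrm{Cl}}(v)} = \Omega(v)$; and $(5)$ $v$ is non-wandering and $\mathrm{LD}(v) = \emptyset$. Under the standing hypothesis of the corollary that $v$ itself is non-wandering, conditions $(1)$ and $(5)$ of Lemma~\ref{lem-01} reduce respectively to conditions $(1)$ and $(2)$ of the corollary. Hence the implication $(1) \Leftrightarrow (5)$ of Lemma~\ref{lem-01} yields the equivalence claimed.

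In practice the proof amounts to writing: ``Suppose $v$ is non-wandering. By Lemma~\ref{lem-01}, the conditions '$v$ is non-wandering and $\overline{\mathop{\mathrm{Cl}}(v)} = \Omega(v)$' and '$v$ is non-wandering and $\mathrm{LD}(v) = \emptyset$' are equivalent; dropping the common non-wandering clause, which holds by hypothesis, yields the stated equivalence.'' No further invocation of the strict-limit-circuit / wandering-holonomy machinery from Theorem~\ref{main:rec_wandering_pt} is required at this stage, because that machinery has already been consumed inside Lemma~\ref{thm41} (used for $(3)\Rightarrow(4)$ in Lemma~\ref{lem-01}) and inside Lemma~\ref{lem:top23} (used to rule out exceptional orbits).

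Because the proof is essentially a bookkeeping step, there is no real obstacle at this level; the only thing to be careful about is the book-keeping itself, namely verifying that the two directions of the corollary do indeed correspond to the $(1)\Leftrightarrow(5)$ portion of Lemma~\ref{lem-01} under the non-wandering hypothesis, and that the compactness of $S$ (needed to invoke Lemma~\ref{lem-01}) is already assumed. The substantive difficulty has been pushed one level upstream into Lemma~\ref{lem-01}, which in turn relies on Lemma~\ref{thm41} and on Proposition~\ref{interior}, so Corollary~\ref{prop:nw} itself need only cite Lemma~\ref{lem-01} and conclude.
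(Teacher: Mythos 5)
Your proposal is correct and matches the paper's intent: the corollary is stated immediately after Lemma~\ref{lem-01} precisely as the specialization of its equivalence $(1)\Leftrightarrow(5)$ to the case where the non-wandering hypothesis is assumed outright, which is exactly the bookkeeping step you describe. No further argument is needed beyond citing Lemma~\ref{lem-01}, so your write-up is essentially the same proof.
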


The non-wandering property in the previous corollary is necessary. 
In fact, the Denjoy flow has no locally dense orbits but the non-wandering set consists of exceptional recurrent orbits. 
This implies the following observation. 

\begin{corollary}
Each Hamiltonian flow on a compact surface satisfies the denseness of closed orbits. 
\end{corollary}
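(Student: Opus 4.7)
The plan is to derive this from Corollary~\ref{prop:nw}, which reduces the density of closed orbits to two properties: non-wanderingness of the flow, and the non-existence of locally dense orbits. So the goal is to show that any Hamiltonian flow $v = X_H$ on a compact surface $S$ is non-wandering and has $\mathrm{LD}(v) = \emptyset$.

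For the non-wandering part, I would invoke the fact that the Hamiltonian vector field $X_H$ preserves the symplectic form $\omega$, which on a surface is an area form. Applying the classical Poincar\'e recurrence theorem to this area-preserving flow on the compact surface $S$ shows that the recurrent set is dense in $S$; since recurrent points are non-wandering and the non-wandering set $\Omega(v)$ is closed, we obtain $\Omega(v) = S$, i.e.\ the flow is non-wandering.

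For the second part, the key observation is that $H$ is a first integral: the function $H$ is constant along every orbit of $X_H$, so $\overline{O(x)} \subseteq H^{-1}(H(x))$ for every $x \in S$. Suppose toward contradiction that $x \in \mathrm{LD}(v)$, so $U := \mathrm{int}\,\overline{O(x)} \neq \emptyset$. Then $H$ is constant on the nonempty open set $U$, hence $dH \equiv 0$ on $U$, which forces $X_H \equiv 0$ on $U$; that is, $U \subseteq \mathop{\mathrm{Sing}}(v)$. Since $U$ is open and contained in $\overline{O(x)}$, the orbit $O(x)$ must intersect $U$ (otherwise $O(x) \subseteq S \setminus U$, a closed set, forcing $\overline{O(x)} \cap U = \emptyset$). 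But a non-singular orbit cannot meet $\mathop{\mathrm{Sing}}(v)$, and a singular orbit is a single point, which has empty interior in the $2$-dimensional surface $S$ and therefore cannot be locally dense. Either way, we reach a contradiction, so $\mathrm{LD}(v) = \emptyset$.

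Combining these two facts with Corollary~\ref{prop:nw} yields $\overline{\mathop{\mathrm{Cl}}(v)} = \Omega(v)$, as required. The main conceptual point, and the only place where some care is needed, is the step that leverages the openness of $U \subseteq \overline{O(x)}$ to force $O(x) \cap U \neq \emptyset$; this is what turns the elementary remark ``level sets of $H$ have no interior unless $dH = 0$'' into the genuine obstruction to local density. Beyond that, the argument is a direct application of the preservation of the area form (for non-wanderingness) and of $H$ (for the absence of locally dense orbits).
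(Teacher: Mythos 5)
Your proof is correct and follows essentially the same route as the paper: both establish non-wanderingness from area preservation and rule out locally dense orbits using the Hamiltonian as a first integral, then conclude via Corollary~\ref{prop:nw}. You merely spell out the details (Poincar\'e recurrence for $\Omega(v)=S$, and the argument that a level set containing an open set forces $dH=0$ there) that the paper leaves implicit, and you prove exactly $\mathrm{LD}(v)=\emptyset$ rather than the slightly stronger non-existence of non-closed recurrent orbits asserted in the paper.
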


\begin{proof}
Let $v$ be a Hamiltonian flow on a compact surface. 
Since any Hamiltonian flow on a compact surface is an area-preserving and so has no wandering domain, the flow $v$ is non-wandering. 
The existence of the Hamiltonian of $v$ on the surface implies the non-existence of non-closed recurrent  orbits. 
Corollary~\ref{prop:nw} implies the assertion. 
\end{proof}

The compactness in the previous corollary is necessary. 
In fact, the flow generated by a vector field $X = (1,0)$ on the plane $\R^2$ is Hamiltonian but consists of non-recurrent orbits.

\bibliographystyle{my-amsplain-nodash-abrv-lastnamefirst-nodot}
\bibliography{yt20210901}

@article {Basener2006min,
    AUTHOR = {Basener, William},
     TITLE = {Geometry of minimal flows},
   JOURNAL = {Topology Appl.},
  FJOURNAL = {Topology and its Applications},
    VOLUME = {153},
      YEAR = {2006},
    NUMBER = {18},
     PAGES = {3627--3632},
      ISSN = {0166-8641},
       DOI = {10.1016/j.topol.2006.03.026},
       URL = {https://doi.org/10.1016/j.topol.2006.03.026},
}

@book {Auslander1988mini,
    AUTHOR = {Auslander, Joseph},
     TITLE = {Minimal flows and their extensions},
    SERIES = {North-Holland Mathematics Studies},
    VOLUME = {153},
      NOTE = {Notas de Matem\'{a}tica [Mathematical Notes], 122},
 PUBLISHER = {North-Holland Publishing Co., Amsterdam},
      YEAR = {1988},
     PAGES = {xii+265},
      ISBN = {0-444-70453-1},
}

@book {Gottschalk1955top_dyn,
    AUTHOR = {Gottschalk, Walter Helbig and Hedlund, Gustav Arnold},
     TITLE = {Topological dynamics},
    SERIES = {American Mathematical Society Colloquium Publications, Vol.
              36},
 PUBLISHER = {American Mathematical Society, Providence, R.I.},
      YEAR = {1955},
     PAGES = {vii+151},
   MRCLASS = {56.0X},
}

@book {Ellis1969lecture,
    AUTHOR = {Ellis, Robert},
     TITLE = {Lectures on topological dynamics},
 PUBLISHER = {W. A. Benjamin, Inc., New York},
      YEAR = {1969},
     PAGES = {xv+211},
   MRCLASS = {54.82},
}

@article {Smith1988transitive,
    AUTHOR = {Smith, Russell A. and Thomas, E. S.},
     TITLE = {Transitive flows on two-dimensional manifolds},
   JOURNAL = {J. London Math. Soc. (2)},
  FJOURNAL = {Journal of the London Mathematical Society. Second Series},
    VOLUME = {37},
      YEAR = {1988},
    NUMBER = {3},
     PAGES = {569--576},
      ISSN = {0024-6107},
   MRCLASS = {58F25 (58F10 58F21)},
       DOI = {10.1112/jlms/s2-37.3.569},
       URL = {https://doi.org/10.1112/jlms/s2-37.3.569},
}

@article {Gottschalk1963minimal,
    AUTHOR = {Gottschalk, Walter H.},
     TITLE = {Minimal sets occur maximally},
   JOURNAL = {Trans. New York Acad. Sci. (2)},
  FJOURNAL = {Transactions of the New York Academy of Sciences. Series 2},
    VOLUME = {26},
      YEAR = {1963/64},
     PAGES = {348--353},
      ISSN = {0028-7113},
   MRCLASS = {54.82},
  MRNUMBER = {162239},
MRREVIEWER = {P. E. Conner},
}

@article {Athanassopoulos1997stable,
    AUTHOR = {Athanassopoulos, Konstantin and Petrescou, Theodoros and
              Strantzalos, Polychronis},
     TITLE = {A class of flows on {$2$}-manifolds with simple recurrence},
   JOURNAL = {Comment. Math. Helv.},
  FJOURNAL = {Commentarii Mathematici Helvetici},
    VOLUME = {72},
      YEAR = {1997},
    NUMBER = {4},
     PAGES = {618--635},
      ISSN = {0010-2571},
   MRCLASS = {58F25 (54H20)},
       DOI = {10.1007/s000140050038},
       URL = {https://doi.org/10.1007/s000140050038},
}

@article {Ulcigrai2011absence,
    AUTHOR = {Ulcigrai, Corinna},
     TITLE = {Absence of mixing in area-preserving flows on surfaces},
   JOURNAL = {Ann. of Math. (2)},
  FJOURNAL = {Annals of Mathematics. Second Series},
    VOLUME = {173},
      YEAR = {2011},
    NUMBER = {3},
     PAGES = {1743--1778},
      ISSN = {0003-486X},
   MRCLASS = {37A25 (37A10 37C40 37E35)},
       DOI = {10.4007/annals.2011.173.3.10},
       URL = {https://doi.org/10.4007/annals.2011.173.3.10},
}

@article {Sacker1972existence,
    AUTHOR = {Sacker, Robert J. and Sell, George R.},
     TITLE = {On the existence of periodic solutions on {$2$}-manifolds},
   JOURNAL = {J. Differential Equations},
  FJOURNAL = {Journal of Differential Equations},
    VOLUME = {11},
      YEAR = {1972},
     PAGES = {449--463},
      ISSN = {0022-0396},
   MRCLASS = {57D25 (34C35)},
       DOI = {10.1016/0022-0396(72)90058-7},
       URL = {https://doi.org/10.1016/0022-0396(72)90058-7},
}

@article {Athanassopoulos1995minimal,
    AUTHOR = {Athanassopoulos, Konstantin and Manoussos, Antonios},
     TITLE = {Minimal flows on multipunctured surfaces of infinite type},
   JOURNAL = {Bull. London Math. Soc.},
  FJOURNAL = {The Bulletin of the London Mathematical Society},
    VOLUME = {27},
      YEAR = {1995},
    NUMBER = {6},
     PAGES = {595--598},
      ISSN = {0024-6093},
   MRCLASS = {58F25},
       DOI = {10.1112/blms/27.6.595},
       URL = {https://doi.org/10.1112/blms/27.6.595},
}

@article {Aranson1973invariant,
    AUTHOR = {Aranson, S. H. and Grines, V. Z.},
     TITLE = {Certain invariants of dynamical systems on two-dimensional
              manifolds (necessary and sufficient conditions for the
              topological equivalence of transitive systems)},
   JOURNAL = {Mat. Sb. (N.S.)},
    VOLUME = {90(132)},
      YEAR = {1973},
     PAGES = {372--402, 479},
   MRCLASS = {58F10 (54H20)},
}

@article {Marzougui2009dense,
    AUTHOR = {Marzougui, Habib and Soler L\'{o}pez, Gabriel},
     TITLE = {Area preserving analytic flows with dense orbits},
   JOURNAL = {Topology Appl.},
  FJOURNAL = {Topology and its Applications},
    VOLUME = {156},
      YEAR = {2009},
    NUMBER = {18},
     PAGES = {3011--3015},
      ISSN = {0166-8641},
   MRCLASS = {37E35 (37A10 37B20 37C10)},
       DOI = {10.1016/j.topol.2009.05.011},
       URL = {https://doi.org/10.1016/j.topol.2009.05.011},
}

@article{yokoyama2021refine,
title = {Refinements of topological invariants of flows},
journal = {Discrete \& Continuous Dynamical Systems},
volume = {0},
number = {},
pages = {-},
year = {2021},
author = {Tomoo Yokoyama},
}

@book {Devaney1988chaos,
    AUTHOR = {Devaney, Robert L.},
     TITLE = {An introduction to chaotic dynamical systems},
    SERIES = {Addison-Wesley Studies in Nonlinearity},
   EDITION = {Second},
 PUBLISHER = {Addison-Wesley Publishing Company, Advanced Book Program,
              Redwood City, CA},
      YEAR = {1989},
     PAGES = {xviii+336},
      ISBN = {0-201-13046-7},
}

@article{pugh1970omega,
  title={{The $\Omega$-stability theorem for flows}},
  author={Pugh, Charles and Shub, Michael},
  journal={Inventiones mathematicae},
  volume={11},
  number={2},
  pages={150--158},
  year={1970},
  publisher={Springer}
}

@article{hayashi1997connecting,
  title={{Connecting invariant manifolds and the solution of the $C^1$ stability and $\Omega$-stability conjectures for flows}},
  author={Hayashi, Shuhei},
  journal={Annals of mathematics},
  pages={81--137},
  year={1997},
  publisher={JSTOR}
}

@Article{Freudenthal1931end,
 Author = {Hans {Freudenthal}},
 Title = {{\"Uber die Enden topologischer R\"aume und Gruppen}},
 FJournal = {{Mathematische Zeitschrift}},
 Journal = {{Math. Z.}},
 ISSN = {0025-5874},
 Volume = {33},
 Pages = {692--713},
 Year = {1931},
 Publisher = {Springer, Berlin/Heidelberg},
 Language = {German},
 DOI = {10.1007/BF01174375},
}

@book{candel2000foliation,
  title={Foliations I},
  author={Candel, Alberto and Conlon, Lawrence},
  volume={23},
  year={2000},
  publisher={American Mathematical Soc.}
}

@article{boyd2015diffeomorphisms,
  title={On Diffeomorphisms of Compact 2-Manifolds with All Nonwandering Points Being Periodic},
  author={Boyd, Suzanne and Guirao, Juan LG and Hero, Michael},
  journal={International Journal of Bifurcation and Chaos},
  volume={25},
  number={14},
  pages={1540020},
  year={2015},
  publisher={World Scientific}
}

@incollection{efremova2014remarks,
  title={Remarks on the nonwandering set of skew products with a closed set of periodic points of the quotient map},
  author={Efremova, Lyudmila S},
  booktitle={Nonlinear maps and their applications},
  pages={39--58},
  year={2014},
  publisher={Springer}
}

@article{arteaga1995smooth,
  title={Smooth triangular maps of the square with closed set of periodic points},
  author={Arteaga, Carlos},
  journal={Journal of mathematical analysis and applications},
  volume={196},
  number={3},
  pages={987--997},
  year={1995},
  publisher={Elsevier}
}

@article{xiong1981continuous,
  title={Continuous self-maps of the closed interval whose periodic points form a closed set},
  author={Xiong, Jin Cheng},
  journal={J. China Univ. Sci. Tech.},
  volume={11},
  number={4},
  pages={14--23},
  year={1981}
}

@article{nitecki1982maps,
  title={Maps of the interval with closed periodic set},
  author={Nitecki, Zbigniew},
  journal={Proceedings of the American Mathematical Society},
  volume={85},
  number={3},
  pages={451--456},
  year={1982}
}

@article{pugh1968closing,
  title={The closing lemma},
  author={Pugh, Charles C},
  journal={Matematika},
  volume={12},
  number={6},
  pages={80--135},
  year={1968}
}

@article{pugh1967improved,
  title={An improved closing lemma and a general density theorem},
  author={Pugh, Charles C},
  journal={American Journal of Mathematics},
  volume={89},
  number={4},
  pages={1010--1021},
  year={1967},
  publisher={JSTOR}
}

@article {Arnaud1998,
    AUTHOR = {Arnaud, Marie-Claude},
     TITLE = {Le ``closing lemma'' en topologie {$C^1$}},
   JOURNAL = {M\'{e}m. Soc. Math. Fr. (N.S.)},
  FJOURNAL = {M\'{e}moires de la Soci\'{e}t\'{e} Math\'{e}matique de France. Nouvelle S\'{e}rie},
    NUMBER = {74},
      YEAR = {1998},
     PAGES = {vi+120},
      ISSN = {0249-633X},
   MRCLASS = {58F10 (58F22)},
  MRNUMBER = {1662930},
MRREVIEWER = {Shigenori Matsumoto},
       DOI = {10.24033/msmf.387},
       URL = {https://doi.org/10.24033/msmf.387},
}

@article{mane1987proof,
  title={A proof of the {$C^1$} stability conjecture},
  author={Ma{\~n}{\'e}, Ricardo},
  journal={Publications Math{\'e}matiques de l'Institut des Hautes {\'E}tudes Scientifiques},
  volume={66},
  number={1},
  pages={161--210},
  year={1987},
  publisher={Springer}
}

@article{makhrova2020limit,
  title={On Limit Sets of Monotone Maps on Dendroids},
  author={Makhrova, EN},
  journal={Applied Mathematics and Nonlinear Sciences},
  volume={5},
  number={2},
  pages={311--316},
  year={2020}
}

@article{abdelli2018nonwandering,
  title={Nonwandering points of monotone local dendrite maps revisited},
  author={Abdelli, Hafedh and Abouda, Haithem and Marzougui, Habib},
  journal={Topology and its Applications},
  volume={250},
  pages={61--73},
  year={2018},
  publisher={Elsevier}
}

@inproceedings{makhrova2016set,
  title={On the set of non-wandering points of monotone maps on local dendrites},
  author={Makhrova, EN and Vaniukova, KS},
  booktitle={Journal of Physics: Conference Series},
  volume={692},
  number={1},
  pages={012012},
  year={2016},
  organization={IOP Publishing}
}

@article{good2018topological,
  title={What is topological about topological dynamics?},
  author={Good, Chris and Mac{\'\i}as, Sergio},
  journal={Discrete \& Continuous Dynamical Systems},
  volume={38},
  number={3},
  pages={1007},
  year={2018},
  publisher={American Institute of Mathematical Sciences}
}

@article{aoki1994topological,
  title={Topological theory of dynamical systems: recent advances},
  author={Aoki, Nobuo and Hiraide, K{\=o}ichi},
  year={1994},
  publisher={Elsevier}
}

@article{kurata1978hyperbolic,
  title={Hyperbolic nonwandering sets without dense periodic points},
  author={Kurata, Masahiro},
  journal={Proceedings of the Japan Academy, Series A, Mathematical Sciences},
  volume={54},
  number={7},
  pages={206--211},
  year={1978},
  publisher={The Japan Academy}
}

@article {dankner1977smale,
    AUTHOR = {Dankner, Alan},
     TITLE = {On {S}male's {A}xiom {${\rm A}$} dynamical systems},
   JOURNAL = {Ann. of Math. (2)},
  FJOURNAL = {Annals of Mathematics. Second Series},
    VOLUME = {107},
      YEAR = {1978},
    NUMBER = {3},
     PAGES = {517--553},
      ISSN = {0003-486X},
   MRCLASS = {58F15 (34C35)},
  MRNUMBER = {488161},
MRREVIEWER = {Richard C. Churchill},
       DOI = {10.2307/1971127},
       URL = {https://doi.org/10.2307/1971127},
}

@article{dankner1978smale,
  title={{On Smale's Axiom A dynamical systems}},
  author={Dankner, Alan},
  journal={Annals of Mathematics},
  volume={107},
  number={3},
  pages={517--553},
  year={1978},
  publisher={JSTOR}
}

@incollection{newhouse1973hyperbolic,
  title={Hyperbolic nonwandering sets on two-dimensional manifolds},
  author={Newhouse, Sheldon and Palis, Jacob},
  booktitle={Dynamical systems},
  pages={293--301},
  year={1973},
  publisher={Elsevier}
}

@article{smale1967differentiable,
  title={Differentiable dynamical systems},
  journal={Bull. Amer. Math. Soc., v. 73, No 6},
  author={Smale, S},
  year={1967}
}

@article{daghar2021periodic,
  title={Periodic points of regular curve homeomorphisms},
  author={Daghar, Aymen and Naghmouchi, Issam and Riahi, Moncef},
  journal={Qualitative Theory of Dynamical Systems},
  volume={20},
  number={2},
  pages={1--10},
  year={2021},
  publisher={Springer}
}

@book{birkhoff1927dynamical,
  title={Dynamical systems},
  author={Birkhoff, George David},
  volume={9},
  year={1927},
  publisher={American Mathematical Soc.}
}

@book{katok1997introduction,
  title={Introduction to the modern theory of dynamical systems},
  author={Katok, Anatole and Hasselblatt, Boris},
  number={54},
  year={1997},
  publisher={Cambridge university press}
}

@article{reeb1952certaines,
  title={Sur certaines propri{\'e}t{\'e}s topologiques des vari{\'e}t{\'e}s feuillet{\'e}es},
  author={Reeb, Georges},
  journal={Act. Sc. et Ind.},
  year={1952},
  publisher={Hermann}
}

@article{yokoyama2021poincare,
  title={{A Poincar{\'e}-Bendixson theorem for flows with arbitrarily many singular points}},
  author={Yokoyama, Tomoo},
  journal={arXiv preprint	arXiv:2109.12478},
  year={2017}
}

@article{cherry1937topological,
  title={Topological properties of the solutions of ordinary differential equations},
  author={Cherry, TM},
  journal={American Journal of Mathematics},
  volume={59},
  number={4},
  pages={957--982},
  year={1937},
  publisher={JSTOR}
}

@article {Torhorst1921,
    AUTHOR = {Torhorst, Marie},
     TITLE = {\"{U}ber den {R}and der einfach zusammenh\"{a}ngenden ebenen
              {G}ebiete},
   JOURNAL = {Math. Z.},
  FJOURNAL = {Mathematische Zeitschrift},
    VOLUME = {9},
      YEAR = {1921},
    NUMBER = {1-2},
     PAGES = {44--65},
      ISSN = {0025-5874},
%   MRCLASS = {DML},
%  MRNUMBER = {1544451},
       DOI = {10.1007/BF01378335},
       URL = {https://doi.org/10.1007/BF01378335},
}

@article{richards1963classification,
  title={On the classification of noncompact surfaces},
  author={Richards, Ian},
  journal={Transactions of the American Mathematical Society},
  volume={106},
  number={2},
  pages={259--269},
  year={1963},
  publisher={JSTOR}
}

@article{yokoyama2016topological,
  title={A topological characterization for non-wandering surface flows},
  author={Yokoyama, Tomoo},
  journal={Proceedings of the American Mathematical Society},
  volume={144},
  number={1},
  pages={315--323},
  year={2016}
}

@article{gutierrez1978structural,
  title={Structural stability for flows on the torus with a cross-cap},
  author={Guti{\'e}rrez, Carlos},
  journal={Transactions of the American Mathematical Society},
  volume={241},
  pages={311--320},
  year={1978}
}

@article{gardiner1985structure,
  title={The structure of flows exhibiting nontrivial recurrence on two-dimensional manifolds},
  author={Gardiner, CJ},
  journal={Journal of differential equations},
  volume={57},
  number={1},
  pages={138--158},
  year={1985},
  publisher={Academic Press}
}

@article{yokoyama2017decompositions,
  title={Decompositions of surface flows},
  author={Yokoyama, Tomoo},
  journal={arXiv preprint arXiv:1703.05501},
  year={2017}
}

@book{aranson1996introduction,
  title={Introduction to the qualitative theory of dynamical systems on surfaces},
  author={Aranson, S Kh and Beliski{\u\i}, Genrikh Ruvimovich and Zhuzhoma, EV},
  year={1996},
  publisher={American Mathematical Society}
}

@article{yokoyama2019properness,
  title={Properness of foliations},
  author={Yokoyama, Tomoo},
  journal={Topology and its Applications},
  volume={254},
  pages={171--175},
  year={2019},
  publisher={Elsevier}
}

@book{nikolaev1999flows,
  title={Flows on 2-dimensional manifolds: an overview},
  author={Nikolaev, Igor and Zhuzhoma, Evgeny},
  number={1705},
  year={1999},
  publisher={Springer Science \& Business Media}
}

@article{markley1970number,
  title={On the number of recurrent orbit closures},
  author={Markley, Nelson G},
  journal={Proceedings of the American Mathematical Society},
  pages={413--416},
  year={1970},
  publisher={JSTOR}
}

@article {B,
    AUTHOR = {Bhatia, Nam P.},
     TITLE = {Attraction and nonsaddle sets in dynamical systems},
   JOURNAL = {J. Differential Equations},
  FJOURNAL = {Journal of Differential Equations},
    VOLUME = {8},
      YEAR = {1970},
     PAGES = {229--249},
      ISSN = {0022-0396},
   MRCLASS = {34.65},
  MRNUMBER = {0268481},
MRREVIEWER = {S. Ahmad},
       URL = {https://doi.org/10.1016/0022-0396(70)90003-3},
}

@book {BS,
    AUTHOR = {Bhatia, N. P. and Szeg\"o, G. P.},
     TITLE = {Stability theory of dynamical systems},
    SERIES = {Die Grundlehren der mathematischen Wissenschaften, Band 161},
 PUBLISHER = {Springer-Verlag, New York-Berlin},
      YEAR = {1970},
     PAGES = {xi+225},
   MRCLASS = {34.65 (93.00)},
  MRNUMBER = {0289890},
MRREVIEWER = {G. R. Sell},
}

\end{document}